\documentclass[10pt,reqno]{amsart}
\usepackage{appendix}
\usepackage{amsmath,amsfonts,amssymb,amscd,amsthm,amsbsy,bbm, epsf,calc,enumerate,color,graphicx,verbatim,cite,import}
\usepackage{color}
\usepackage{datetime,appendix}
\usepackage{chapterbib,epstopdf}
\usepackage[mathlines]{lineno} 

\usepackage{float}
\usepackage{graphicx}
\usepackage{dsfont}
\usepackage{ifpdf}
\ifpdf
    \usepackage[colorlinks,citecolor=black,linkcolor=black]{hyperref}
\fi

\newcommand{\beq}{\begin{equation}}
\newcommand{\eeq}{\end{equation}}

\newcommand{\bbR}{{\mathbb{R}}}

\newcommand{\bbZ}{{\mathbb{Z}}}

\newcommand{\calO}{{\mathcal {O}}}

\newcommand{\calN}{{\mathcal N}}
\newcommand{\calB}{{\mathcal B}}

\newcommand{\calL}{{\mathcal L}}

\newcommand{\calH}{{\mathcal H}}

\newcommand{\calG}{{\mathcal G}}

\newcommand{\lb}{\label}


\textwidth=6.25truein
\textheight=8.5truein
\hoffset=-0.65truein
\voffset=-.5truein




\newtheorem{definition}{Definition}[section]
\newtheorem{thm}{Theorem}[section]
\newtheorem{theorem}[thm]{Theorem}

\newtheorem{lem}[thm]{Lemma}
\newtheorem{lemma}[thm]{Lemma}

\newtheorem{corollary}[thm]{Corollary}

\newtheorem{proposition}[thm]{Proposition}

\theoremstyle{definition}

\newtheorem{remark}{Remark}[section]


\definecolor{darkgreen}{rgb}{0,0.4,0}

\newcommand{\R}{\mathbb{R}}

\def\un{\underline}

\def\Xint#1{\mathchoice
   {\XXint\displaystyle\textstyle{#1}}%
   {\XXint\textstyle\scriptstyle{#1}}%
   {\XXint\scriptstyle\scriptscriptstyle{#1}}%
   {\XXint\scriptscriptstyle\scriptscriptstyle{#1}}%
   \!\int}
\def\XXint#1#2#3{{\setbox0=\hbox{$#1{#2#3}{\int}$}
     \vcenter{\hbox{$#2#3$}}\kern-.5\wd0}}

\def\mint{\Xint-}

\newcommand{\eps}{\varepsilon}

\numberwithin{equation}{section}

\def\dist{\text{\rm dist}}

\setlength{\parindent}{1em}
\setlength{\parskip}{0em}

\def\Xint#1{\mathchoice
{\XXint\displaystyle\textstyle{#1}}%
{\XXint\textstyle\scriptstyle{#1}}%
{\XXint\scriptstyle\scriptscriptstyle{#1}}%
{\XXint\scriptscriptstyle\scriptscriptstyle{#1}}%
\!\int}
\def\XXint#1#2#3{{\setbox0=\hbox{$#1{#2#3}{\int}$ }
\vcenter{\hbox{$#2#3$ }}\kern-.6\wd0}}

\newcommand{\pa}{\partial}
\newcommand{\na}{\nabla}
\newcommand{\D}{\Delta}
\DeclareMathOperator*{\sgn}{sgn}

\title[Convergence of Free Boundaries]{Convergence of Free Boundaries in the Incompressible Limit of Tumor Growth Models}



\author[Jiajun Tong and Yuming Paul Zhang]{Jiajun Tong and Yuming Paul Zhang}


\address{(J.~Tong) Beijing International Center for Mathematical Research \\ Peking University\\ Beijing 100871, China}
\email{\tt tongj@bicmr.pku.edu.cn}

\address{(Y.~Zhang) Department of Mathematics \& Statistics \\ Auburn University\\ Auburn, AL 36849, United States}
\email{\tt yzhangpaul@auburn.edu}






\begin{document}

\vspace{18mm} \setcounter{page}{1} \thispagestyle{empty}

\begin{abstract}
We investigate the general Porous Medium Equations with drift and source terms that model tumor growth.
Incompressible limit of such models has been well-studied in the literature, where convergence of the density and pressure variables are established, while it remains unclear whether the free boundaries of the solutions exhibit convergence as well.
In this paper, we provide an affirmative result by showing that the free boundaries converge in the Hausdorff distance in the incompressible limit.
To achieve this, we quantify the relation between the free boundary motion and spatial average of the pressure, and establish a uniform-in-$m$ strict expansion property of the pressure supports.
As a corollary, we derive upper bounds for the Hausdorff dimensions of the free boundaries and show that
the limiting free boundary has finite $(d-1)$-dimensional Hausdorff measure.
\end{abstract}

\maketitle

\noindent{\small {\bf Keywords:}  Free boundary convergence, incompressible limit, porous medium equation, Hausdorff distance, Hausdorff dimension, tumor growth.}

\vspace{5pt}
\noindent{\small  {\bf 2010 Mathematics Subject Classification:} 35R35, 35K65, 49Q15, 76D27, 35B40, 35Q92.}
\vspace{5pt}


\section{Introduction}
\label{sec: intro}
Consider the Porous Medium Equation (PME) with both drift and source terms:
\begin{equation}\label{main}
\partial_t\varrho =\nabla\cdot (\varrho\nabla p)+\nabla\cdot (\varrho\,  b (x,t))+\varrho f(x,t,p) \quad \hbox{in } Q_T:=\mathbb{R}^d\times (0,T),
\end{equation}
equipped with a bounded, non-negative, and compactly supported initial data $\varrho(\cdot,0)$. 
Here $d\geq 2$, $T>0$, $b:Q_T \to \mathbb{R}^d$ is a given vector field, and $f:Q_T\times[0,\infty)\to \bbR$ describes a pressure-limited growth rate.
The unknowns $\varrho: Q_T\to [0,+\infty)$ and
$p: Q_T\to [0,+\infty)$ represent an evolving density and is corresponding pressure, respectively, and they are related by
\beq\label{main2}
p=P_m(\varrho):=\frac{m}{m-1}\varrho^{m-1}\quad\text{with }m>1.
\eeq
With this, the term $\nabla \cdot (\varrho\nabla p)$ in \eqref{main} can be written as a nonlinear diffusion $\Delta \varrho^m$, which admits degeneracy when the density $\varrho$ is close to $0$. 
Such diffusion can effectively model nonlinear smoothing behavior in various physical settings, such as fluid flows in porous media and population dynamics (see e.g., \cite{BGHP,TBL,hw95,W}). 
The advection term in \eqref{main} models transport of agents by a background flow, while the source term accounts for the pressure-dependent change of $\varrho$. 
In view of this, \eqref{main}--\eqref{main2} are commonly used to model a time-varying distribution of tumor cells under the influence of an external drift as well as their own growth and death \cite{byrne1996modelling,friedman2007mathematical,friedman2004hierarchy}.
It is referred to as a compressible tumor growth model because $\varrho$ and $p$ satisfy the compressible constitutive law \eqref{main2} \cite{david2021free}.
It is worth mentioning that the equation for $p$, which reads
\begin{equation}\label{1.1}
\partial_t p=(m-1) p(\Delta p+\nabla\cdot b +f)+\nabla p\cdot(\nabla p+ b),
\end{equation}
plays an important role in the study of this type of models.

One key feature of the degenerate diffusion is the property of finite-speed propagation. That is, if the initial data is non-negative, bounded, and compactly supported, the positive set of $\varrho$ stays bounded within any finite time.
Hence, whenever $p(\cdot,t)=0$ in some open domain of the space, there appears a {\it free boundary} separating the region where $\varrho$ is positive from that where $\varrho=0$; 
it is defined to be the set $\pa \{\varrho(\cdot,t)>0\}$, or equivalently, $\pa \{p(\cdot,t)>0\}$.
Since \eqref{main} can be rewritten as
$$
\pa_t \varrho - \nabla\cdot \big((\nabla p + {b}(x,t))\varrho\big)=\varrho f(x,t,p),
$$
one can formally deduce the outward normal velocity $V$ of the free boundary whenever it is locally a sufficiently smooth hypersurface
\begin{equation*}
V = - (\nabla p+{b})\cdot \nu = |\nabla p| -{b}\cdot \nu \quad \hbox{on } (x,t) \in \Gamma:=\partial\{p>0\},
\end{equation*}
where $\nu$ denotes the outward unit normal vector in space at a boundary point $(x,t)$. Such motion of the free boundary agrees with the dynamics governed by the Darcy's law.
Since $\varrho$ solves a diffusion equation, one can expect that the free boundary gets regularized by the term $|\nabla p|$ as time goes by.


Given $m>1$, let $(\varrho_m,p_m)$ denote the solution to \eqref{main}--\eqref{main2}. 
Under certain conditions, as $m\to\infty$, $(\varrho_m,p_m)$ will converge in a suitable sense to the unique weak solution $(\varrho_\infty,p_\infty)$ of a Hele-Shaw-type flow
\beq\lb{1.2}
\left\{
\begin{aligned}
&\partial_t\varrho_\infty=\Delta\varrho_\infty+\nabla\cdot(\varrho_\infty { b })+\varrho_\infty f(x,t,p_\infty) &&\quad\text{ in }Q_T,\\
&\Delta p_\infty+\nabla\cdot{ b }+f(x,t,p_\infty)=0&&\quad\text{ on }\{p_\infty>0\},\\
&p_\infty(1-\varrho_\infty)=0,\quad \varrho_\infty\leq 1&&\quad\text{ in }Q_T.
\end{aligned}
\right.
\eeq
Such convergence is usually called \emph{the incompressible limit} of \eqref{main}--\eqref{main2}, and \eqref{1.2} is referred as an incompressible model.
When $b=0$, $f=0$ and suitable boundary conditions are prescribed, \eqref{1.2} corresponds to the classical Hele-Shaw model, which describes the dynamics of a fluid injected into the narrow gap between two horizontally-placed parallel plates \cite{HS1898}.
Many problems in the fluid dynamics and the mathematical biology can be treated as the Hele-Shaw model or its variants; 
readers are referred to related studies on the fluid dynamics \cite{Alazard2019LyapunovFI,Alazard2023TheHS,CJK,CJK2,Crdoba2008InterfaceET,dibenedetto1984ill,dong21,dong2,EJ,entov95,escher1997classical,howison1986cusp,howison1992complex,kim2003uniqueness,cheng2014global}, tumor growth models \cite{Collins2023FreeBR,jacobs2022tumor,kim2021interface,PQV}, and population dynamics \cite{maury2010,CKY}, whereas the list is by no means exhaustive.
The incompressible limit as $m\to \infty$ has been justified in many models that are similar to \eqref{main}--\eqref{main2}. For example, \cite{PQV,kim2018porous,David_S,KM2023} concern the case $b=0$ and $f=f(p)$; \cite{guillen22} considers the case $b=0$ and $f=f(x,t)$; and \cite{AKY,kim2019singular,david2022convergence,chu2022} study the equations with advections. 
\cite{CKY,He2022IncompressibleLO} studied the model with chemotaxis via Newtonian interaction, and very recently, \cite{He2023IncompressibleLO} further addressed the case with both growth and chemotaxis.
Besides, the incompressible limit of tumor growth models with nutrient was analyzed in \cite{PQV,david2021free}.
Let us mention that the incompressible limit is also a classic problem in the Navier-Stokes equation and related fluid models \cite{bresch2014singular,Lions1998IncompressibleLF,lions1999free,Masmoudi2001IncompressibleIL,VZ}. 
For our problem \eqref{main}--\eqref{main2}, we will present a proof of its incompressible limit in Theorem \ref{thm: incompressible limit} for completeness.

In the incompressible model, $p_\infty$ serves as the Lagrange multiplier corresponding to the constraint $\varrho_\infty \leq 1$.
The boundary of the set $\{p_\infty(\cdot,t)>0\}$ naturally defines a free boundary.
In the tumor growth modeling, it characterizes the time-varying front of the domain inhabited and saturated by the tumor cells. 
In particular, when $\varrho_\infty$ only takes the values $0$ and $1$, which is called a patch solution, the dynamics of $\varrho_\infty$ can be reduced to that of the free boundary.
In this special case, one can also derive the velocity law of the free boundary formally
\beq\lb{0.01}
V = (-\nabla p - {b})\cdot \nu = |\nabla p| - {b} \cdot \nu \quad \text{on }\partial\{p>0\}.
\eeq
Note that this is the same as that for \eqref{main}.
See \cite{kim2018porous,kim2019singular} for discussions on the general non-patch case.

%

\medskip

Both the models \eqref{main}--\eqref{main2} and \eqref{1.2} feature free boundaries.
Numerous studies have addressed the regularity of these boundaries, see for example \cite{CVWlipschitz,C1alpha,kimzhang21,alltime,flatness} for the PME with $m$ fixed,  \cite{CJK,CJK2,dong21,dong2} for the Hele-Shaw, and \cite{kim2022regularity} for general settings with advection and source terms. 
On the other hand, as is mentioned above, the incompressible limit has been well-studied, where the convergence is established on the level of the density $\varrho$ and the pressure $p$.
However, it is not clear whether the free boundaries will exhibit convergence in any good sense as $m\to \infty$.
The primary goal of this paper is to provide an affirmative answer to this question by demonstrating that, under suitable assumptions, for all finite times, these free boundaries converge in the Hausdorff distance as $m\to\infty$.
The proof crucially relies on quantifying the free boundary propagation in terms of spatial average of the pressure, and establishing a uniform-in-$m$ strict expansion property of support of the solutions $\Omega_{p_m}(t):=\{p_m(\cdot,t)>0\}$.
Moreover, we can bound the Hausdorff dimensions of both the free boundaries in the finite-$m$ cases and a certain ``good part" of the free boundary in the limiting case.
In what follows, we shall first introduce each of these results and sketch the ideas of proving them.
Readers are directed to Proposition \ref{L.5.3}, Theorems \ref{T.main_weaker_assumption}--\ref{T.main} and Theorem \ref{thm: estimate Hausdorff dim of FB} for their precise statements.

\subsection{Uniform strict expansion of the support relative to streamlines}

Our first main result is that, under suitable conditions, the supports of the pressure variables strictly expand relative to the streamlines defined by $-b$ (see \eqref{ode}) and uniformly in $m$.
In the seminal paper \cite{CFregularity} which considers the PME, such a property was obtained via a compactness argument under the assumption \eqref{introgr} below, and thus the constants there may depend on $m$. 
To obtain $m$-independent estimates on the strict expansion, we 
prove {\it propagation of the strict expansion property} along the streamlines, i.e., if the free boundary strictly expands relative to the streamlines at the initial time, then it should do so for all finite time and uniformly for all large $m$.
Such a 
property, yet with the constants depending on $m$, was previously employed in some PME-type equations in one space dimension \cite{caffarelli1979regularity,dim1}.

\medskip

Let us explain the strategy of the proof with a highlight of our contribution.
First of all, we prove the classic Aronson-B\'enilan estimate (AB estimate for short) for the general equations \eqref{main}--\eqref{main2} under necessary assumptions; see Proposition \ref{T.3.1}.
It is used to bound the super-harmonicity of $p$ in space and quantify the decay rate of $p$ when moving forward in time along the streamlines.
In particular, it allows us to prove that the support $\Omega_{p}(t)$ is non-decreasing in $t$ with respect to the streamlines. 
Such an estimate was originally observed by \cite{AB} for the PME, providing a pointwise lower bound for $\Delta p$, and it has been extended to many PME-type equations with drifts and source terms (see e.g.~\cite{PQV,kimzhang21,chu2022}). 
A weaker integral version of the AB estimate was proved in \cite{david2021free} for some tumor growth models with nutrients (also see \cite{David_S}). 
Nevertheless, we remark that a pointwise AB estimate is crucial in studying the free boundary regularity.

Secondly, we establish a quantitative relation between propagation of the free boundaries and local spatial average of the pressure in \eqref{main}--\eqref{main2}.
See Lemma \ref{L.4.1} and Lemma \ref{L.4.2}.
Roughly speaking, one can show that a large average pressure can effectively accelerate the motion of the free boundary relative to the streamlines, while a small average hinders that.
Such an argument originates from \cite{CFregularity} on the free boundary regularity of the PME, and it was also applied to the PME with advection in the second author's previous work \cite{kimzhang21}. 
For our purpose, we need to refine this result, not only by ensuring its applicability to the general model with the drift and source terms, but also by proving its uniformity for all large $m>1$, which is a new observation.



Finally, we prove the propagation of the strict expansion property by quantifying the expansion outcomes from \cite{CFregularity} and then promoting the strict expansion from the initial time to all finite positive times.  
The key result is Proposition \ref{L.5.3}.
It shows that, if the supports of $p_m$ strictly expands relative to the streamlines uniformly in $m$ at the initial time, then for any $t_0\in [\eta_0,T)$ with any fixed $\eta_0>0$ and any free boundary point $x_0\in\Omega_{p_m}(t_0)$, if we let $x_0$ move slightly backward in time by $s$ along the streamline, the resulting point must lie outside $\Omega_{p_m}(t_0-s)$, and its distance to $\Omega_{p_m}(t_0-s)$ is at least $Cs^\gamma$ $(\gamma>4)$ which is uniform in $m$, $x_0$ and $t_0$.
In other words, the support $\Omega_{p_m}(t)$ should expand faster relative to the streamlines by a definite amount and this holds uniformly in $m$.
Proposition \ref{L.5.3} also provides a quantitative characterization of weak non-degeneracy of the pressure variable, i.e., spatial average of the pressure near the free boundary must have a uniform-in-$m$ lower bound.
Note that in view of \eqref{0.01}, this heuristically agree with the claim that 
the free boundary should move faster than the convective flow.
The rigorous justification crucially relies on the  above-mentioned results in Section \ref{S3}.


\subsection{Convergence of the free boundaries}
Our second main result is the convergence of the free boundaries. 
Let $f$ and $b$ satisfy suitable conditions, and let $p_m$ solve \eqref{1.1} in $Q_T$ with a non-negative initial data $p_m^0$ which we will assume to be uniformly bounded and uniformly compactly supported in $m$.
Assume $p_m$ to be space-time continuous (see the discussion on its regularity after Definition \ref{def1.1}).
Denote $\Omega_{p_m}(t):=\{p_m(\cdot,t)>0\}=\{\varrho_m(\cdot,t)>0\}$ as before. 
Suppose that
\begin{enumerate}[(i)]
\item 
$\varrho_m^0 = P_m^{-1}(p_m^0)$ converges to some $\varrho^0$ in $L^1(\bbR^d)$, and that the Hausdorff distance between $\Omega_{p_m}(0)$ and $\Omega_{p_l}(0)$ diminishes as the finite $m,l$ go to $\infty$;
\item $\{p_m\}_{m}$ forms a Cauchy sequence in $L^1(Q_T)$, which can be justified in the standard incompressible limit;
and

\item the support of $p_m$ strictly expands relative to streamlines at time $0$ uniformly in $m$ (see more discussions on this in Section \ref{sec: assumptions} and Section \ref{sec: strict expansion}).
\end{enumerate}
Then we can prove convergence of $\Omega_{p_m}(t)$: 
for any $\eta_0\in (0,T)$ 
and $t\in [\eta_0,T)$, 
\[
\text{the Hausdorff distance between }\Omega_{p_m}(t)\text{ and } \Omega_{p_l}(t)\text{ diminishes as $l,m\to\infty$}.
\]
Convergence of the free boundaries is also addressed: after any positive time $\eta_0$, as $l,m\to\infty$,
\beq
\text{the space-time Hausdorff distance between the free boundaries of }{p_m}\text{ and }{p_l}\text{ diminishes}.
\label{eqn: FB convergence}
\eeq
We can further prove convergence results involving the solution of the limit problem as well as its free boundary, which is a bit more subtle nevertheless.
Let $(\varrho_\infty,p_\infty)$ be the weak solution to \eqref{1.2} with the initial data $\varrho^0$, and denote $\Omega_{p_\infty}(t): = \{p_\infty(\cdot,t)>0\}$.
Then we can show that, whenever $m\gg 1$,
\begin{align*}
&\text{$\Omega_{p_\infty}(t)$ is contained in a small neighborhood of $\Omega_{p_m}(t)$, and any free boundary}\\
&\text{point of $p_m$ must lie close to the free boundary of $p_\infty$ in the space-time.}
\end{align*}
However, interestingly, if we exchange $p_\infty$ and $p_m$ in this statement, it fails to hold under the current assumptions; see Remark \ref{rmk: counterexample for FB convergence to p_inf}.
To obtain improved convergence results, we need to additionally assume that $\Omega_{p_m}(0)$ should converge to $\Omega_{p_\infty}(0)$ in the Hausdorff distance as $m\to \infty$.
See the precise statements of the above results in Theorems \ref{T.main_weaker_assumption}--\ref{T.main}.

In \eqref{eqn: FB convergence}, the free boundary of $p_m$ is considered as a space-time set, and the use of the space-time Hausdorff distance instead of the spatial Hausdorff distance at each time is not due to technical difficulties, but it is rather essential. 
Indeed, the drift term in \eqref{main}--\eqref{main2} may induce topological changes of the supports of the solutions, resulting in formation of holes inside the supports. When these holes get filled up, the topological boundaries of the supports will undergo drastic changes, which may lead to a large Hausdorff distance between the free boundaries of the solutions with different indices.
For example, imagine that both $p_m$ and $p_l$ admit a tiny hole at the same spot inside their supports which lies far from their respective exterior boundaries. 
If the holes disappear at slightly different times, even though $\Omega_{p_m}(t)$ and $\Omega_{p_l}(t)$ might be close in the Hausdorff distance at each time instant, $\pa\Omega_{p_m}(t)$ and $\pa\Omega_{p_l}(t)$ can have a large Hausdorff distance.
%
%
%
%
This issue can be addressed by allowing to compare the free boundaries of different solutions at slightly different times.
In fact, we manage to estimate the distance between $\partial \Omega_{p_m}(t)$ and $\pa\Omega_{p_l}(t-s)$ for  $s$ being small.




\medskip

Now let us sketch the ideas behind the proof.
We basically want to upgrade the $L^1(Q_T)$-convergence of 
$p_m$ as $m\to \infty$ to that of the supports of the solutions and the free boundaries.

\begin{enumerate}[(1)]
\item We first show that for any $x_0\in \Omega_{p_m}(t_0)$ with $t_0>0$, it must be close to $\Omega_{p_l}(t_0)$ as long as $m,l\gg 1$.
Although it is not precise, the idea is to trace $x_0$ back to the initial time along the streamline, and study the resulting point $x_0'$.
We can show that, if $x_0$ is not close to $\Omega_{p_l}(t_0)$, $x_0'$ must lie outside the initial support of $p_m$, so the streamline passing through $(x_0,t_0)$ should cross a free boundary point $(x_0'',t_0'')$ of $p_m$ with $t_0''\leq t_0$.
Thanks to the weak non-degeneracy of $p_m$ at $(x_0'',t_0'')$ and the uniform decay estimate for the pressure, we find that a large $d(x_0,\Omega_{p_l}(t_0))$ will lead to a large $\|p_m-p_l\|_{L^1(Q_T)}$, which contradicts with the $L^1(Q_T)$-convergence of the pressures when $m,l\gg 1$.

\item The above result implies that the Hausdorff distance between $\Omega_{p_m}(t_0)$ and $\Omega_{p_l}(t_0)$ should be small whenever $m,l\gg 1$.
We shall improve this to the convergence of the free boundaries.
This requires estimating the distances from a free boundary point of $p_m$ to the space-time set $\{p_l(\cdot,\cdot)>0\}$, and to its complement. 
The former follows from the previous result, while for the latter, it suffices to use the strict expansion property of $p_m$ and the fact that $p_m$ and $p_l$ are close in $L^1(Q_T)$.

\item So far we have studied convergence of the supports and the free boundaries of the pressure variables with large but finite indices.
When it comes to convergence results involving the limiting problem, the basic idea is to pass to the limit in the finite-$m$ case, but several additional difficulties arise.
Firstly, when taking the incompressible limit, the convergence of $p_m$ to the limiting pressure $p_\infty$ is only in the space-time $L^p$-sense, which is relatively weak.
Also, $p_\infty$ is not defined pointwise in the space-time, so in order to discuss $\Omega_{p_\infty}(t)$ and its boundary, we have to specify its pointwise value in a suitable way.
Moreover, several tools described before are not available for the limiting solution.
\end{enumerate}
It is worth highlighting that this argument does not rely on the regularity of the free boundaries, which can have rather complicated behavior when a general drift term is present.

\subsection{Hausdorff dimensions of the free boundaries}
Our last main result is an estimate for the Hausdorff dimensions of the free boundaries for the finite-$m$ problems.
Combining this with the convergence of the free boundaries, we can further conclude that, in the limiting problem with the drift and source terms, a suitably defined ``good part" of the free boundary (see \eqref{6.112}) has finite $(d-1)$-dimensional Hausdorff measure. 
The precise statement is given in Theorem \ref{thm: estimate Hausdorff dim of FB}.

For patch solutions to the Hele-Shaw model with growth, \cite{PQV} proved that the postive set of the density has finite perimeter by deriving a $BV$ estimate for the density, and \cite{MPQ} further proved that its boundary has finite $(d-1)$-dimensional Hausdorff measure.
In \cite{kim2019singular}, the authors used the sup-convolution technique to show that, in a Hele-Shaw-type model with drift and source terms, 
for a certain class of general initial data, the positive set of pressure has finite perimeter; also see \cite{kim2018porous} for the case without drift.
%
%
Our argument is inspired from \cite{kim2019singular}.
However, there are new challenges in our problem.
Firstly, the limiting $\varrho_\infty$ might take the values $0$ and $1$ only (or in the case of \cite{kim2019singular}, the density in the exterior region is assumed to be strictly less than 1 with a positive gap), so the finite $BV$ norm of $\varrho_\infty$ indeed implies the finite perimeter of $\{\varrho_\infty=1\}$; 
whereas for each finite $m$, $\varrho_m$ should be continuous, so $\varrho_m$ having a finite $BV$ norm does not imply finite perimeter of its free boundary, letting alone the issue that the boundary of a finite-perimeter set may not have finite $(d-1)$-dimensional Hausdorff measure \cite[Example 1.10]{Giusti1984MinimalSA}.
Secondly, Lemma 5.1 in \cite{kim2019singular} works only for equations with time-independent advections and sources,  
while we want to deal with more general cases. 
To overcome these difficulties, we apply both the inf- and sup-convolution constructions to show a novel $L^1$-stability of solutions with some perturbed initial data. 
Using this and the weak non-degeneracy again, we find that, with some $d_m$ decreasing to $(d-1)$ as $m\to \infty$, the $d_m$-dimensional Hausdorff measure of the free boundary $\pa\Omega_{p_m}(t)$ is finite. 
Combining this with the convergence of the free boundaries in the Hausdorff distance, we can further deduce that the ``good part" of the limiting free boundary has finite $(d-1)$-dimensional Hausdorff measure.
See the details in Section \ref{S6}.

\subsection{Other related works}
In addition to the abundance of literature listed above, let us mention some other works on various convergence issues of the supports and the free boundaries of solutions in tumor growth and related models.

For \eqref{main}--\eqref{main2} with a fixed $m>1$ and $(b,f) = (\nabla \Phi,0)$ where $\Phi$ is a convex potential, \cite{kimlei} considered the convergence of the free boundary as $t\to +\infty$. 
Later \cite{AKY} proved the incompressible limit of this problem with a subharmonic $\Phi$ and a patch initial data.
It also obtained, among many other results, convergence of the sets $\Omega_{p_m}(t)$ to $\Omega_{p_\infty}(t)$  in the Hausdorff distance \cite[Theorem 3.5]{AKY} by using a viscosity solution approach.

For an incompressible tumor growth model with nutrient, \cite{jacobs2022tumor} proved in the case of zero nutrient diffusion that, under suitable conditions, the support of the patch solution $\varrho$ becomes rounder and rounder as $t\to +\infty$, and its boundary admits $C^{1,\alpha}$-regularity \cite[Corollary 5.5, Corollary 5.15, and Theorem 6.9]{jacobs2022tumor}.
More recently, \cite{Kim2022TumorGW} studied the same model with non-zero nutrient diffusion, with the diffusion coefficient denoted by $D$.
They proved under suitable assumptions that, as $D\to 0$, the free boundary $\partial \{p_D>0\}$ in the finite-$D$ case converge in the Hausdorff distance to that in the zero-diffusion case for every suitably large time. 
Their argument relies on the regularity of the free boundary in the limiting zero-diffusion case.

Let us also mention that, \cite{Liu2017AnAF} developed a numerical scheme to accurately capture the front propagation in the PME-type tumor growth models.
Numerical evidence was provided in some model problems to show the proximity of the free boundaries in the case $m\gg 1$ with the one in the incompressible model.

\subsection{Organization of the paper}
We first introduce our notations and assumptions in Section \ref{sec: preliminaries}.
Some basic results on the model \eqref{main}--\eqref{main2} with finite $m$ are also discussed.
In Section \ref{sec: AB estimate}, we prove the classic AB estimate, and state the result on the incompressible limit of \eqref{main}--\eqref{main2} whose proof will be presented in Appendix \ref{sec: proof of incompressible limit}.
Quantitative relation between spatial average of the pressure and propagation of the free boundaries is established in Section \ref{S3}.
Section \ref{sec: strict expansion} is devoted to the strict expansion property of the support of the solutions: we first look into several conditions that guarantee the strict expansion at the initial time in Section \ref{sec: strict expansion at t=0}, and then show in Section \ref{sec: strict expansion at later time} that such property can propagate to all finite times.
In Section \ref{S3} and Section \ref{sec: strict expansion}, the $m$-dependence in all the estimates are carefully tracked in order to ensure that those results are uniformly applicable to all large $m$. 
We prove the convergence of the supports and the free boundaries in Section \ref{sec: convergence of fb}, and estimate the Hausdorff dimensions of the free boundaries in Section \ref{S6}. 
We highlight once again Proposition \ref{L.5.3}, 
Theorems \ref{T.main_weaker_assumption}--\ref{T.main} and Theorem \ref{thm: estimate Hausdorff dim of FB} as the main results of this paper.
Finally, proofs of two lemmas in Section \ref{sec: strict expansion at t=0} will be provided in Appendices \ref{sec: proof of strict expansion lemma 1} and \ref{sec: proof of L.4.4}, respectively.

\subsection*{Acknowledgement}
The authors would like to thank Inwon Kim and Zhennan Zhou for useful discussions.
J.~Tong is partially supported by the Peking University Start-up Grant. Y.~P.~Zhang is partially supported by the Auburn University Start-up Grant.

\section{Preliminaries}
\label{sec: preliminaries}
\subsection{Notations}

We will use the following notations.
\begin{itemize}
\item Fix $T\in (0,\infty)$, and let $Q_T:=\mathbb{R}^d\times (0,T)$.
\item Let $B(x,r):= \{y\in\mathbb{R}^d:\,|y-x|< r\}$, and $B_r:=B(0,r)$.
\smallskip



\item We write 
\[
\|{ b }\|_{C_{x,t}^{2,1}}:=\sup_{t\in (0,T)} \|b(\cdot,t)\|_{C^2_x(\bbR^d)}+\|\partial_t b(\cdot,t)\|_{C^1_x(\bbR^d)}.
\]
Note that $\|{ b }\|_{C_{x,t}^{2,1}}\geq \sup_{(x,t)\in Q_T}|b(x,t)|$ by the definition. Also denote 
\[
\|f\|_{\dot{C}^{1}_{x,t,p}}:=\sup_{(x,t,p)\in Q_T\times [0,\infty)} |\pa_x f(x,t,p)|+|\partial_tf(x,t,p)|+|\partial_pf(x,t,p)|,
\]
\[
\|f(\cdot,\cdot,0)\|_\infty:=\sup_{(x,t)\in Q_T}|f(x,t,0)|,
\]
and
\[
\|f_+\|_\infty:=\max\left\{\sup_{(x,t,p)\in Q_T\times [0,\infty)} f(x,t,p),\,0\right\}.
\]
Later, we will assume $\|f\|_{\dot{C}_{x,t,p}^1}$, $\|f(\cdot,\cdot,0)\|_\infty$ and $\|f_+\|_\infty$ to be finite.


\smallskip

\item $\nabla b$ denotes the spatial gradient of $b$, $\nabla\cdot b$ denotes the spatial divergence of $b$, and 
\[
\|\nabla b\|_\infty:= \sup_{(x,t)\in Q_T}\|\nabla b(x,t)\|_2.
\] 
Here $\|\cdot\|_2$ denotes the Frobenius norm of matrices.


\item For a continuous, non-negative function $p:Q_T \to \R$, we denote
$$\Omega_p:=\{(x,t)\in Q_T:\,p(x,t)>0\}, \quad \Omega_p(t):=\{p(\cdot,t)>0\}
$$
and
$$
\Gamma_p(t):=\partial\Omega_p(t),\quad \Gamma_p :=\bigcup_{t\in(0,T)}\big(\Gamma_p(t)\times\{t\}\big).
$$
We may omit the subscript $p$ whenever it is clear from the context.

\smallskip

\item For two sets $U,V\subseteq \bbR^d$ (or $\bbR^{d+1}$), the Hausdorff distance between them is defined by
\[
d_H(U,V):=\max\left\{\sup_{x\in U}d(x,V),\,\sup_{y\in V}d(y,U)\right\},
\]
where $d(x,V):=\inf\{|x-y|:\, y\in V\}$.

\smallskip

\item We write
\[
\mint_{B(x,r)}f(y)\, dy:=\frac{1}{|B(x,r)|}\int_{B(x,r)} f(y)\,dy,
\]
where $|B(x,r)|$ is the volume of $B(x,r)$.

\smallskip






\item
Given a suitably smooth $b =b(x,t)$, \textit{streamlines} associated with the convective vector field $-b$ (cf.~\eqref{main})  
are defined as the unique solution $X(x_0,t_0;t)$ of the following ODE: given $x_0\in \bbR^d$ and $t_0\geq 0$,
\begin{equation}\label{ode}
\left\{\begin{aligned}
    \partial_t X(x_0,t_0;t)&={-}{ b }(X(x_0,t_0;t),t_0+t), \quad t\geq -t_0,\\
    X(x_0,t_0;0)&=x_0.
    \end{aligned}\right.
\end{equation}
We shall write $X(t):=X(0,0;t)$.

\smallskip

\item Throughout the paper, we will use $C$, $C_*$, $C_j$ and $c_j$ $(j = 0,1,2,3)$ etc., to denote various \textit{universal constants}, i.e., constants that only depend on (see the assumptions below)
\[
d,\, T,\, \|{ b }\|_{C_{x,t}^{2,1}},\,\|f\|_{\dot{C}_{x,t,p}^1}, \, \|f(\cdot,\cdot,0)\|_\infty,\,\|f_+\|_\infty,\,
R_0
\]
and the constants in the condition. 
In particular, these constants are always independent of $m$ unless otherwise stated. 
Their values may change from line to line.
We will use the notation $C_m$ to represent constants additionally depending on $m$. 
\end{itemize}


\subsection{Assumptions}
\label{sec: assumptions}

We list a few main assumptions needed in the rest of the paper.
Some other special assumptions will be introduced when necessary.
\begin{itemize}
\item Throughout the paper, we will always assume
\beq\lb{1.7}
\|b\|_{C_{x,t}^{2,1}}+\|f\|_{\dot{C}_{x,t,p}^1} +\|f(\cdot,\cdot,0)\|_{\infty}+\|f_+\|_{\infty}<\infty,
\eeq
and
\beq\lb{cond}
\sigma:=\inf_{(x,t,p)\in Q_T\times [0,\infty)}\nabla\cdot{ b }(x,t)+f(x,t,p)-\partial_pf(x,t,p)p>0.
\eeq
These are the key assumptions needed for the Aronson-B\'enilan estimate; see Proposition \ref{T.3.1}.

\item 
We take the initial pressures $p_m^0 = p_m^0(x)$ $(m>1)$ to be continuous in $\bbR^d$ and satisfy
\beq
\sup_{m>1}\sup_{\bbR^d} p_m^0 < +\infty\quad\mbox{and}\quad {\mathrm{supp\,}p_m^0\subset B_{R_0}}
\label{1.8}
\eeq
for some $R_0>0$.
Let (cf.~\eqref{main2})
\beq\lb{main3}
\varrho_m(\cdot,0)= \varrho_m^0(\cdot):=\left(\frac{m-1}{m}p_m^0\right)^\frac1{m-1}
\eeq
be the initial data for \eqref{main}--\eqref{main2}.

\item 
Our convergence result relies on the assumption that $\{p_m\}_{m}$ converges to $p_\infty$ in $L^1(Q_T)$ (see Section \ref{sec: convergence of fb}), where $p_m$ and $p_\infty$ are the pressures in the compressible and the incompressible models respectively.
To verify this, we shall prove (part of) the classic incompressible limit result.
For that purpose, we will also assume 
\beq
\mbox{for some $\varrho^0\geq 0$ with $\mathrm{supp\,}\varrho^0\subset B_{R_0}$, $\varrho_m^0\to \varrho^0$ in $L^1(\bbR^d)$ as $m\to +\infty$},
\label{assumption: L^1 convergence of initial density}
\eeq
and 
\beq
\sup_m \big\|\Delta (\varrho_m^0)^m\big\|_{L^1} + \big\|\na \varrho_m^0\big\|_{L^1}<+\infty.
\label{assumption: uniform BV norm for initial data}
\eeq



\item 
We will need $\Omega_{p_m}(t):=\{p_m(\cdot,t)>0\}$ to be strictly expanding at time $0$ with respect to streamlines and uniformly in $m$. 
For this purpose, we assume that the initial domain $\Omega_{p_m}(0)$ has a Lipschitz boundary, and the initial pressure satisfies the sub-quadratic growth near the free boundary:
\begin{equation}
\label{introgr}
p_m^0(x)\geq \gamma_0 \big(d(x,\Omega_{p_m}(0)^c)\big)^{2-\varsigma_0} \hbox{ for some } \gamma_0>0,~\varsigma_0\in (0,2).
\end{equation}
For the PME, this condition has been known for a long time to imply the strict expansion \cite{aronson1983initially,CFregularity}; when there is drift in the equation, such strict expansion should be understood as that relative to the streamlines \cite{kimzhang21}.
However, \eqref{introgr} is not enough to guarantee the uniformity of strict expansion for all large $m>1$, 
so we shall further assume either one of the following conditions (see Lemma \ref{L.5.11} and Lemma \ref{L.4.4}):
\begin{enumerate}
\item $p_m^0$ satisfies
\beq\lb{R.1.1}
\inf_{x\in\bbR^d} \Delta p_m^0(x)+\nabla\cdot b (x,0)+f\big(x,0,p_m^0(x)\big)\geq 0;
\eeq
or 
\smallskip
    \item $\{\Omega_{p_m}(0)\}_m$ satisfies the uniform interior ball condition, i.e., there exists $r>0$ such that, for any $m>1$ and any $x\in\Gamma_{p_m}(0)$, we can find an open ball $B$ with radius $r$ such that $B\subset \Omega_{p_m}(0)$ and $x\in\overline{B}$.
    Moreover, we need $\sigma> 2d\sup_{x\in\bbR^d}|\nabla b(x,t)|$ for all $t>0$ sufficiently small, where $\sigma$ is from \eqref{cond}.
\end{enumerate}
It is not clear whether the smallness assumption on $\|\nabla b\|_\infty$ in (2) can be removed.

\end{itemize}

Since we are interested in the asymptotics as $m\to\infty$, we will mainly focus on the large-$m$ case in the sequel, although many of our results can be extended to $m>1$ easily.


\subsection{Preliminary results}
\label{sec: prelim results on PME model}
In this subsection, we review some known results on the equations \eqref{main}--\eqref{main2} with $m>1$ fixed.
For brevity, we shall omit the subscripts of $\varrho_m$ and $p_m$ in this part. 
We start from introducing the notion of weak solutions to \eqref{main}--\eqref{main2}.

\begin{definition}\label{def1.1}
Fix $m>1$.
Let $\varrho^0$ be bounded and non-negative, and satisfy $\varrho^0\in L^1(\mathbb{R}^d)\cap L^\infty(\mathbb{R}^d)$.
Let $T>0$. 
We say that a non-negative and bounded $\varrho:\mathbb{R}^d\times [0,T)\to[0,\infty)$ is a subsolution (resp.~supersolution) to \eqref{main} with the initial data $\varrho^0$ if \begin{equation}\label{definitionsol}
 \varrho\in C([0,T),L^1(\mathbb{R}^d))\cap L^2([0,T)\times\mathbb{R}^d)\;\text{ and }\; \varrho^m\in L^2([0,T), \dot{H}^1(\mathbb{R}^d)),
\end{equation}
and for all non-negative $\phi\in C_c^\infty(\mathbb{R}^d\times [0,T))$,
\begin{equation}
\label{def sol2}
\begin{split}
\int_0^T\int_{\mathbb{R}^d} \varrho\,\phi_t\, dx\, dt
\geq (\mbox{resp.\,}\leq) &\; 
-\int_{\mathbb{R}^d} \varrho^0(x)\phi(0,x)\, dx\\
&\;+\int_0^T\int_{\mathbb{R}^d}(\nabla \varrho^m+\varrho b)\nabla\phi-\varrho f(x,t,P_m(\varrho))\phi\, dx\, dt.
\end{split}
\end{equation}
We say that $\varrho$ is a weak solution to \eqref{main} if it is both a sub- and super-solution of \eqref{main}. 
We also say that $p:= P_m(\varrho)$ is a weak solution (resp.~super-/sub-solution) to \eqref{1.1} with the initial data $p(\cdot,0) = p^0$ if $\varrho$ is a weak solution (resp.~super-/sub-solution) to \eqref{main}.

\end{definition}

Existence of the weak solutions to  \eqref{main}--\eqref{main2} has been proved in, for example, \cite{book,IKYZ,chu2022}.
Beyond \eqref{main}--\eqref{main2}, well-posedness of general degenerate parabolic-type equations has been established in e.g.~\cite{alt1983quasilinear,AB,BGHP,Bertsch,chu2022,Jacobs2020TheP,Jacobs2020DarcysLW}. When there is no source term, \cite{IKYZ} proved the uniform-in-time $L^\infty$-estimate of the solutions. \cite{di1982continuity,dib83,IKYZ} proved H\"{o}lder regularity of the solutions. 
Throughout the paper, we will assume that for each $m>1$, $p_m$ is a solution, which is continuous in $\bbR^d\times [0,T)$, to \eqref{1.1} with the initial data $p_m^0$ described in the previous subsection.

We will need the following comparison principle, which also implies the uniqueness of the weak solution. The proof can be found in \cite[Theorem 9.1]{chu2022}.

\begin{theorem}
Let $\underline{\varrho}$ and  $\bar{\varrho}$ be, respectively, a sub-solution and a super-solution to \eqref{main} with bounded, non-negative and compactly supported initial data $\underline{\varrho}^0$ and  $\bar{\varrho}^0$. If $\underline{\varrho}^0\leq \bar{\varrho}^0 $, then $\underline{\varrho}\leq \bar{\varrho}$.
\end{theorem}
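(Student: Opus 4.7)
\emph{Proof plan.} The strategy is the classical duality argument for comparison in degenerate parabolic equations, adapted to the drift $b$ and the source $\varrho f(x,t,P_m(\varrho))$. First I would subtract the weak inequalities in \eqref{def sol2} for the subsolution $\underline{\varrho}$ and the supersolution $\bar{\varrho}$, and linearize the two nonlinearities. Setting $w:=\underline{\varrho}-\bar{\varrho}$, writing $\underline{\varrho}^m-\bar{\varrho}^m = a\,w$ with
\[
a(x,t):=\int_0^1 m\bigl(\theta\,\underline{\varrho}+(1-\theta)\,\bar{\varrho}\bigr)^{m-1}\,d\theta\geq 0,
\]
and $\underline\varrho f(\cdot,P_m(\underline\varrho))-\bar\varrho f(\cdot,P_m(\bar\varrho))=g\,w$ for some bounded $g$ (by \eqref{1.7} and the uniform $L^\infty$-bound on $\varrho$), and integrating by parts once, one obtains for every non-negative $\phi\in C_c^\infty(\mathbb{R}^d\times[0,T))$,
\[
\int_0^T\!\!\int_{\mathbb{R}^d} w\,\bigl(\phi_t + a\,\Delta\phi - b\cdot\nabla\phi + g\,\phi\bigr)\,dx\,dt \;\geq\; -\int_{\mathbb{R}^d}(\underline\varrho^0-\bar\varrho^0)\,\phi(\cdot,0)\,dx.
\]

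Next, for an arbitrary non-negative $\psi\in C_c^\infty(Q_T)$, I would construct a suitable test function by duality. Regularize $a$ by $a_\epsilon:=\max\{a,\epsilon\}$ (smoothed in space), and solve the terminal-value backward problem
\[
\phi^\epsilon_t + a_\epsilon\,\Delta\phi^\epsilon - b\cdot\nabla\phi^\epsilon + g\,\phi^\epsilon = -\psi,\qquad \phi^\epsilon(\cdot,T)=0,
\]
which is uniformly parabolic backwards in time. The parabolic maximum principle yields $\phi^\epsilon\geq 0$ together with an $\epsilon$-independent $L^\infty$-bound (via a Gronwall argument in $\|g\|_\infty$), and testing the equation against $-\Delta\phi^\epsilon$ and absorbing the drift using Young's inequality and the $C^{2,1}_{x,t}$-regularity of $b$ from \eqref{1.7} produces the $\epsilon$-uniform energy bound $\int_0^T\!\!\int a_\epsilon\,|\Delta\phi^\epsilon|^2\leq C$. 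Inserting $\phi^\epsilon$ into the linearized inequality --- legitimate after a spatial cut-off, since finite-speed-of-propagation under the assumed compact initial supports keeps $w(\cdot,t)$ compactly supported --- and rearranging yields
\[
\int_0^T\!\!\int w\,\psi\,dx\,dt \;\leq\; \int_{\mathbb{R}^d}(\underline\varrho^0-\bar\varrho^0)\,\phi^\epsilon(\cdot,0)\,dx \;+\; \int_0^T\!\!\int w\,(a-a_\epsilon)\,\Delta\phi^\epsilon\,dx\,dt.
\]

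Finally, I would let $\epsilon\to 0$. The first right-hand term is $\leq 0$ because $\underline\varrho^0\leq\bar\varrho^0$ and $\phi^\epsilon(\cdot,0)\geq 0$. For the second, the Cauchy--Schwarz bound
\[
\biggl|\int_0^T\!\!\int w\,(a-a_\epsilon)\,\Delta\phi^\epsilon\biggr|^2 \;\leq\; \biggl(\int_0^T\!\!\int \frac{w^2(a-a_\epsilon)^2}{a_\epsilon}\biggr)\biggl(\int_0^T\!\!\int a_\epsilon\,|\Delta\phi^\epsilon|^2\biggr)
\]
has its second factor $O(1)$ by the energy estimate, while its first factor is $O(\epsilon)$ since $a-a_\epsilon$ is supported on $\{a<\epsilon\}$ and $w$ is bounded. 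Hence $\int_0^T\!\!\int w\,\psi\leq 0$ for every non-negative $\psi\in C_c^\infty(Q_T)$, which forces $w\leq 0$ almost everywhere, i.e.~$\underline\varrho\leq\bar\varrho$. I expect the main obstacle to be the $\epsilon$-uniform energy estimate for the dual backward problem: the drift $b\cdot\nabla\phi^\epsilon$ must be absorbed against $a_\epsilon\Delta\phi^\epsilon$ even where $a_\epsilon$ degenerates, which requires the full $C^{2,1}_{x,t}$-regularity of $b$ together with a careful splitting of the drift term, and is precisely where the framework of \cite{chu2022} refines the classical Vazquez argument for the pure PME.
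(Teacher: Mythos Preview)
The paper does not give its own proof of this comparison principle; it simply cites \cite[Theorem 9.1]{chu2022}. Your proposal is precisely the classical Hilbert duality argument that underlies that reference (and, going further back, V\'azquez's treatment for the pure PME): linearize $\underline\varrho^m-\bar\varrho^m=a\,w$, solve a regularized backward dual problem, derive an $\epsilon$-uniform bound on $\int a_\epsilon|\Delta\phi^\epsilon|^2$, and kill the defect term $\int w(a-a_\epsilon)\Delta\phi^\epsilon$ via Cauchy--Schwarz. So your approach matches the one the paper defers to.

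Two minor points worth tightening. First, the coefficient $g$ in the source linearization is only \emph{bounded measurable}, not smooth, so when you test the dual equation against $-\Delta\phi^\epsilon$ the term $\int g\phi^\epsilon\Delta\phi^\epsilon$ cannot be integrated by parts onto $\nabla g$; instead one typically regularizes $g$ along with $a$, or absorbs this term directly using Young's inequality and the $L^\infty$-bound on $\phi^\epsilon$ together with the $\epsilon$-weighted bound on $\Delta\phi^\epsilon$. Second, your smoothing of $a_\epsilon$ in space introduces a second small parameter (the mollification scale) on top of the floor $\epsilon$; the error $|a-a_\epsilon|$ then has a mollification contribution that must be driven to zero jointly with $\epsilon$, which is routine but should be tracked. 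Your identification of the drift absorption as the main new difficulty relative to pure PME, and your pointer to \cite{chu2022} for that step, are both on target.
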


It is always convenient to work with classical solutions to \eqref{main}.
The following result states that weak solutions can always be approximated by classical ones.
As a result, once we obtain a priori estimates for smooth solutions, we can conclude that the same estimates hold for weak solutions by taking the limit. 

\begin{lemma}\label{approximation}
Fix $m>1$. 
Let $\varrho$ be a weak solution to \eqref{main} in $Q_T$ with bounded, non-negative initial data $\varrho^0$.
Suppose that ${ b }^k$ and $f^k$ are smooth functions that converge to ${ b }$ and $f$ uniformly in $Q_T$ and $Q_T\times [0,\infty)$  as $k\to \infty$.
Then there exists a sequence of strictly positive classical solutions $\varrho^k$ to \eqref{main}, with $b$ and $f$ replaced by $b^k$ and $f^k$, such that $\varrho^k\to\varrho$ locally uniformly in $\bbR^d\times (0,T)$ as $k\to \infty$.

\end{lemma}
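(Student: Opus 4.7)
The plan is to regularize the (possibly degenerate) initial datum so that, together with the already smooth $b^k$ and $f^k$, the problem becomes classically well-posed and uniformly parabolic on the set where the solution stays strictly positive. Global existence of strictly positive classical solutions will follow from spatially constant barriers, and convergence to $\varrho$ will come from a uniform-in-$k$ modulus of continuity combined with the uniqueness of weak solutions.

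Concretely, I would first regularize the initial datum by setting $\varrho^{0,k}:=\varrho^0*\eta_k+\epsilon_k\chi_k$, where $\eta_k$ is a standard mollifier, $\chi_k$ is a smooth cutoff equal to $1$ on a large ball containing $\mathrm{supp}\,\varrho^0$ and vanishing outside a slightly larger ball, and $\epsilon_k\downarrow 0$; then $\varrho^{0,k}$ is smooth, bounded, strictly positive on the support of $\chi_k$, and $\varrho^{0,k}\to\varrho^0$ in $L^1(\mathbb{R}^d)$. Next, since $\varrho^{0,k}$ is bounded below by a positive constant on its support and $b^k,f^k$ are smooth, the diffusion coefficient $m(\varrho^k)^{m-1}$ stays bounded and bounded away from zero on short times, so the equation is uniformly parabolic and classical quasilinear parabolic theory produces a smooth short-time solution $\varrho^k$ of \eqref{main} with $(b,f)$ replaced by $(b^k,f^k)$. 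To extend $\varrho^k$ to $[0,T)$ and maintain strict positivity, I would construct spatially constant sub- and super-solutions $c_k e^{-C_k t}$ and $C_k e^{C_k t}$ with $C_k$ so large that $|\nabla\cdot b^k+f^k|\leq C_k$ on $Q_T\times[0,\infty)$ (valid for each fixed $k$ by smoothness and the uniform bound on $f^k$); by the comparison principle in the preceding theorem, $\varrho^k$ remains strictly positive and bounded on $[0,T)$, which lets the classical solution be continued. Then the uniform-in-$k$ $L^\infty$-bound, following from $\sup_k\|\varrho^{0,k}\|_\infty<\infty$ and the uniform convergence $b^k\to b$, $f^k\to f$, combined with the interior H\"older regularity for PME-type equations \cite{di1982continuity,dib83,IKYZ}, delivers a uniform-in-$k$ H\"older modulus of continuity for $\varrho^k$ on every compact subset of $\mathbb{R}^d\times(0,T)$. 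By Arzel\`a--Ascoli I would extract a subsequence $\varrho^{k_j}\to\tilde\varrho$ locally uniformly, which also gives local uniform convergence of $(\varrho^{k_j})^m$, enough to pass to the limit in the weak formulation \eqref{def sol2} and identify $\tilde\varrho$ as a weak solution of \eqref{main} with initial datum $\varrho^0$. By the uniqueness from the comparison principle, $\tilde\varrho=\varrho$, and since every subsequence admits such a further subsequence, the full sequence $\varrho^k$ converges locally uniformly to $\varrho$.

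The main obstacle is the uniform H\"older modulus in the fourth step: the limiting equation genuinely degenerates wherever $\varrho=0$, so one must rely on regularity theory tailored to the porous medium structure with first-order terms rather than on standard non-degenerate parabolic Schauder estimates (whose constants would blow up as the lower bound $\epsilon_k\downarrow 0$). The DiBenedetto-type estimates cited depend only on $d$, $m$, $\|\varrho^k\|_{L^\infty}$, and controllable norms of the coefficients, so they do yield the needed uniformity, but verifying that these references apply cleanly to the full problem with the particular drift and pressure-dependent source is the technical heart of the argument. A secondary subtlety, easily handled by the $L^1$ and $L^\infty$ convergence of $\varrho^{0,k}$ to $\varrho^0$, is to confirm that $\tilde\varrho\in C([0,T),L^1)$ and attains the initial datum $\varrho^0$, so that the limit genuinely solves the Cauchy problem for \eqref{main}.
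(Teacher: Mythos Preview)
The paper does not actually prove this lemma: it says ``Its proof is standard so we skip it'' and refers to \cite[Chapter~3]{book} and \cite{chu2022}. Your outline is exactly the standard scheme those references carry out --- regularize the data to kill the degeneracy, invoke classical quasilinear theory for the smooth positive problem, pull uniform interior H\"older estimates from the DiBenedetto machinery, and identify the limit via compactness plus uniqueness --- so in spirit you are doing precisely what the paper has in mind.

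One technical point in your construction deserves correction. You set $\varrho^{0,k}=\varrho^0*\eta_k+\epsilon_k\chi_k$ with $\chi_k$ a compactly supported cutoff, so $\varrho^{0,k}$ is \emph{not} strictly positive on all of $\mathbb{R}^d$; outside $\mathrm{supp}\,\chi_k$ it may vanish. Your spatially constant subsolution $c_k e^{-C_k t}$ then fails, since at $t=0$ it does not lie below $\varrho^{0,k}$ everywhere, and you lose the global strict positivity the lemma asserts. The standard remedy (as in \cite[Chapter~5]{book}) is either to drop the cutoff and take $\varrho^{0,k}=\varrho^0*\eta_k+\epsilon_k$, accepting that the approximants are not in $L^1(\mathbb{R}^d)$ --- which is harmless because you only need them to be classical solutions and to converge locally uniformly --- or to work on expanding balls $B_{R_k}$ with positive boundary data and use a diagonal argument. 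Either fix restores the global lower barrier and the rest of your argument goes through unchanged.
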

Its proof is standard so we skip it. We refer the readers to \cite[Chapter 3]{book} and \cite{chu2022} for proofs in simpler cases.

\medskip 

In the following lemma, we prove that $\{p_m\}_{m>1}$ are uniformly bounded in $Q_T$, and their supports have a priori uniform bound as well.

\begin{lemma}\label{uniformb}
Assume \eqref{1.8} and that
$\sup_{t\in [0,T)}\|b(\cdot, t)\|_{C^1_x},\, \|f_+\|_\infty<+\infty$. 
Let $p_m$ solve \eqref{1.1} with the initial data $p_m^0$. Then $p_m$ is uniformly bounded in $Q_T$ by a universal constant. 
Moreover, there exists a universal $R = R(t)$ for $t\in [0,T)$, such that $\mathrm{supp\,}p_m(\cdot,t)\subset \overline{B_{R(t)}}$
where $R(t)$ only depends on the universal constants in the assumptions.
\end{lemma}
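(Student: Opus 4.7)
The plan is to squeeze $p_m$ from above by a single explicit paraboloid super-solution of the pressure equation \eqref{1.1}, which automatically yields both the $L^\infty$ bound and the support bound with all constants depending only on the universal data.

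Concretely, set $C_0 := \|(\nabla\cdot b)_+\|_\infty + \|f_+\|_\infty$ and $A := C_0/(2d)$; both are universal. For $R(t)$ to be chosen, let $\Phi(x,t) := A(R(t)^2 - |x|^2)_+$. On the positivity set $\{|x|<R(t)\}$, direct computation gives $\partial_t\Phi = 2ARR'(t)$, $\nabla\Phi = -2Ax$, and $\Delta\Phi = -2Ad$, and the key $m$-independent cancellation
\[
-2Ad + \nabla\cdot b(x,t) + f(x,t,\Phi) \leq -2Ad + C_0 = 0
\]
forces the dangerous term $(m-1)\Phi(\Delta\Phi+\nabla\cdot b+f)$ on the right-hand side of \eqref{1.1} to be nonpositive \emph{uniformly in $m$}. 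The super-solution requirement then reduces to $2ARR' \geq 4A^2R^2 + 2AR\|b\|_\infty$, equivalently $R'(t) \geq (C_0/d)R(t) + \|b\|_\infty$. I would take $R$ to solve this ODE with equality starting from $R_* := \sqrt{R_0^2 + 2dP_0/C_0}$, where $P_0 := \sup_{m>1}\|p_m^0\|_\infty$; the resulting $R(t)$ is explicit, nondecreasing, and universal.

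The initial dominance $\Phi(\cdot,0)\geq p_m^0$ follows from the choice of $R_*$: outside $B_{R_0}$ both sides vanish, and inside $B_{R_0}$ one has $\Phi(x,0)\geq A(R_*^2-R_0^2)=P_0\geq p_m^0(x)$. The monotone change of variable $p=P_m(\varrho)$ makes the super-solution property of $\Phi$ for \eqref{1.1} equivalent, on $\{\Phi>0\}$, to that of $\tilde\varrho:=P_m^{-1}(\Phi)$ for \eqref{main}, and $\tilde\varrho(\cdot,0)\geq\varrho_m^0$ by monotonicity. The comparison principle quoted above then yields $\varrho_m\leq\tilde\varrho$, i.e.\ $p_m\leq\Phi$ on $Q_T$, and both conclusions follow at once: $\|p_m\|_\infty\leq\sup_t AR(t)^2$ and $\mathrm{supp\,}p_m(\cdot,t)\subset\overline{B_{R(t)}}$ with $R(t)$ universal.

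The only delicate point is the corner of $\Phi$ on $\{|x|=R(t)\}$, where $\nabla\Phi$ is discontinuous and the classical super-solution inequality is not literally defined. I would resolve this by working with the slight enlargement $\Phi_\epsilon(x,t):=A((R(t)+\epsilon)^2-|x|^2)_+$, which is smooth and strictly positive on $\overline{B_{R(t)}}$, and by approximating $\varrho_m$ by the strictly positive classical solutions from Lemma \ref{approximation}; classical comparison then gives $p_m\leq\Phi_\epsilon$, and sending $\epsilon\to 0$ together with the approximation parameter recovers $p_m\leq\Phi$. This is the single place where some care is needed; everything else is direct computation and one appeal to the comparison principle.
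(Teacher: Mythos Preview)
Your proposal is correct and follows essentially the same approach as the paper: both construct the paraboloid barrier $\Phi(x,t)=A(R(t)^2-|x|^2)_+$, choose the height $A$ so that $-2Ad+\|(\nabla\cdot b)_+\|_\infty+\|f_+\|_\infty\leq 0$ kills the $(m-1)$-term uniformly in $m$, and let $R(t)$ solve the linear ODE $R'=2AR+\|b\|_\infty$. The paper handles the corner at $\{|x|=R(t)\}$ by noting that $\Delta\Phi$ picks up a \emph{nonnegative} surface measure there (which only helps the supersolution inequality since $\Phi=0$ on that set) and then citing \cite[Lemma~2.6]{kimzhang21}; your $\epsilon$-enlargement idea is a reasonable alternative, though as written it merely shifts the corner rather than removing it---the cleaner route is to observe that $\tilde\varrho^m\propto\Phi^{m/(m-1)}$ is $C^1$ across $\{|x|=R(t)\}$, so the weak supersolution inequality of Definition~\ref{def1.1} can be verified directly by integration by parts. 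One small caveat: your formula $R_*=\sqrt{R_0^2+2dP_0/C_0}$ breaks down when $C_0=0$; simply take $A\geq C_0/(2d)$ with strict inequality (e.g.\ $A:=\max\{C_0/(2d),1\}$), exactly as the paper does.
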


\begin{proof}
The proof is similar to that of \cite[Lemma 2.1]{David_S}, using a barrier argument.
By \eqref{1.1},
\beq
\partial_t p_m \leq (m-1)p_m(\Delta p_m + \|\nabla \cdot b \|_{\infty} + \|f_+\|_{\infty}) + |\nabla p_m|^2 + |\nabla p_m|\|b\|_{\infty}.
\label{eqn: pressure equation inequality subsolution}
\eeq
Take
\[
\varphi(x,t) := \frac{C}{2}\big(R(t)^2 - |x|^2\big)_+,
\]
with $C>0$ and $R = R(t)$ to be determined.
We want $\varphi$ to satisfy
\[
\partial_t \varphi \geq (m-1)\varphi(\Delta \varphi + \|\nabla \cdot b \|_{\infty} + \|f_+\|_{\infty}) + |\nabla \varphi|^2 + |\nabla \varphi|\|b\|_{L^\infty}.
\]
Since
\begin{align*}
\partial_t \varphi =&\; CR'(t)R(t)\mathds{1}_{\{|x|\leq R(t)\}},\\
\nabla \varphi =&\; -Cx\mathds{1}_{\{|x|\leq R(t)\}},\\
\Delta \varphi =&\; -Cd\mathds{1}_{\{|x|\leq R(t)\}} + CR(t)\delta_{\{|x|= R(t)\}},
\end{align*}
it suffices to choose $C$ and $R(t)$ such that
\[
C \geq d^{-1}\big(\|\nabla \cdot b \|_{L^\infty} + \|f_+\|_{\infty}\big)\quad
\mbox{and}\quad
R'(t) = CR(t) + \|b\|_{L^\infty}.
\]
In addition, if we take $R(0)$ to be suitably large so that {$\varphi(x,0)\geq p_m^0(x)$} (cf.\;\eqref{1.8}), we conclude that $\varphi(x,t)\geq p_m(x,t)$ for all $t\in [0,T]$ by \cite[Lemma 2.6]{kimzhang21} and the comparison principle.
Since $\varphi$ is bounded, compactly supported, and independent of $m$, this proves the desired claim.
\end{proof}

\begin{remark}
In view of Lemma \ref{uniformb}, some assumptions of the main theorems can be weakened. 
For instance, the $C^1$-seminorm of $f$ in the assumption \eqref{1.7} may be restricted to the region $p\in [0,\sup_{m>1}\|p_m\|_{L^\infty(Q_T)}]$ instead of the whole state space. 
Secondly, although we did not assume $f$ to be bounded (from below) in the state space (cf.~\eqref{1.7}), the boundedness of $p_m$ and the assumption $\|f\|_{\dot{C}_{x,t,p}^1}+\|f(\cdot,\cdot,0)\|_{\infty}<\infty$ actually implies that $f(x,t,p_m)$ in the region of interest is uniformly bounded.
Therefore, in the sequel, we shall simply assume $f$ to be bounded in the $(x,t,p)$-state space without loss of generality, i.e., $\|f\|_{\infty}<+\infty$.

Besides, 
instead of \eqref{cond}, it suffices to assume
\[
\inf_{(x,t,p)\in B_R\times [0,T]\times[0,C]}\nabla\cdot{ b }(x,t)+f(x,t,p)-f_{p}(x,t,p) p>0
\]
for some sufficiently large $R=R(T)$ and $C=C(T)>0$. 
\end{remark}

The next result is standard for the PME-type tumor growth models. 
\begin{theorem}
\label{thm: properties of solutions to PME}
Assume \eqref{1.8}--\eqref{main3}.
Also assume $\|f\|_{\infty}+\sup_{t\in [0,T)}\|b(\cdot, t)\|_{C^1_x}<+\infty$ and $\partial_p f\leq 0$.
Let $\varrho_m$ be the continuous solution to \eqref{main} in $Q_T$ with the initial data $\varrho_m^0$.
Then 
\begin{enumerate}
\item  $t\mapsto \int_{\bbR^d}\varrho_m(x,t)\,dx$ is uniformly Lipschitz continuous in $t\in [0,T)$ for all $m>1$;
\medskip

\item Suppose $\varrho_m'$ is another solution to \eqref{main} in $Q_T$ with 
the initial data $\varrho_m'(\cdot,0)$ satisfying \eqref{1.8}--\eqref{main3} as well.
Then there exists $C$ independent of $m$ such that, for all $t\in [0,T)$, 
\[
\left|\int_{\bbR^d}(\varrho_m-\varrho_m')(x,t)\, dx\right|\leq C\int_{\bbR^d}|\varrho_m-\varrho_m'|(x,0)\, dx.
\]
\end{enumerate}
\end{theorem}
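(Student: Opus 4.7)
By Lemma~\ref{uniformb}, there is a universal $R=R(T)$ such that $\mathrm{supp}\,\varrho_m(\cdot,t)\subset \overline{B_R}$ for every $m>1$ and $t\in[0,T)$. I would pick $\phi\in C_c^\infty(\bbR^d)$ with $\phi\equiv 1$ on $B_{2R}$ and, using it (extended trivially in time) as a test function in \eqref{def sol2}, observe that $\nabla\phi$ vanishes on the supports of $\varrho_m$, so both the diffusion and drift contributions drop out and
\[
\frac{d}{dt}\int_{\bbR^d}\varrho_m(x,t)\,dx = \int_{\bbR^d}\varrho_m(x,t)\,f(x,t,p_m)\,dx.
\]
The right-hand side is bounded in absolute value by $\|f\|_\infty\int\varrho_m\,dx$, so a Gronwall estimate combined with the uniform bound $\|\varrho_m^0\|_{L^1}\leq |B_{R_0}|\sup_m\|\varrho_m^0\|_\infty<\infty$ (which follows from \eqref{1.8}--\eqref{main3} since $\frac{m-1}{m}\|p_m^0\|_\infty$ is uniformly bounded) produces a uniform-in-$m$ upper bound on $\|\varrho_m(\cdot,t)\|_{L^1}$. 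This gives the uniform Lipschitz constant $\|f\|_\infty \sup_{m,t}\|\varrho_m(\cdot,t)\|_{L^1}$ for $t\mapsto\int\varrho_m\,dx$.

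\medskip

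\textbf{Plan for part (2).} I would prove the stronger $L^1$ contraction
\[
\|\varrho_m(\cdot,t)-\varrho_m'(\cdot,t)\|_{L^1(\bbR^d)}\leq e^{\|f\|_\infty t}\|\varrho_m(\cdot,0)-\varrho_m'(\cdot,0)\|_{L^1(\bbR^d)}, \qquad t\in[0,T),
\]
with an $m$-independent constant; the stated inequality is then immediate with $C=e^{\|f\|_\infty T}$. By Lemma~\ref{approximation}, it suffices to argue for smooth strictly positive classical solutions and then pass to the limit. Set $w:=\varrho_m-\varrho_m'$; I would multiply the equation for $w$ by a smooth non-decreasing approximation $\eta_\epsilon$ of $\mathrm{sgn}(w_+)$ with $\eta_\epsilon(s)=0$ for $s\leq 0$, and integrate over $\bbR^d$. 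Compactness of supports rules out boundary contributions at infinity.

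\medskip

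\textbf{Term-by-term analysis.} Three contributions arise. (a) The diffusion term $\int \Delta(\varrho_m^m-(\varrho_m')^m)\,\eta_\epsilon(w)\,dx$ is nonpositive: since $u\mapsto u^m$ is strictly increasing, integration by parts gives
\[
-\int \eta_\epsilon'(w)\,\nabla\bigl(\varrho_m^m-(\varrho_m')^m\bigr)\cdot\nabla w\,dx\leq 0,
\]
as $\eta_\epsilon'\geq 0$ and the remaining product has a definite sign by monotonicity. (b) The drift term $\int\nabla\cdot(wb)\,\eta_\epsilon(w)\,dx$ tends to $0$ as $\epsilon\to 0$: splitting it as $\int b\cdot\nabla\Xi_\epsilon(w)\,dx+\int w(\nabla\cdot b)\,\eta_\epsilon(w)\,dx$ with $\Xi_\epsilon'=\eta_\epsilon$, integrating by parts in the first piece, and letting $\Xi_\epsilon(w)\to w_+$, the two contributions cancel. (c) The source term decomposes as
\[
\varrho_m f(x,t,p_m)-\varrho_m' f(x,t,p_m')= f(x,t,p_m)\,w+\varrho_m'\bigl[f(x,t,p_m)-f(x,t,p_m')\bigr];
\]
the first summand contributes at most $\|f\|_\infty\int w_+\,dx$, while the second is $\leq 0$ because $\partial_p f\leq 0$ together with the monotonicity of $P_m$ forces $f(x,t,p_m)-f(x,t,p_m')$ to have sign opposite to $w$, and $\varrho_m'\geq 0$ and $\eta_\epsilon(w)\geq 0$. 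Collecting terms and letting $\epsilon\to 0$ yields $\frac{d}{dt}\int w_+\,dx\leq\|f\|_\infty\int w_+\,dx$; Gronwall plus the symmetric bound for $w_-$ gives the claimed $L^1$ contraction.

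\medskip

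\textbf{Main obstacle.} The delicate issue is keeping every constant independent of $m$. The Kato-type sign estimate on the diffusion is purely structural and $m$-free; the source bound uses only $\|f\|_\infty$ together with $\partial_p f\leq 0$, crucially \emph{without} differentiating through $P_m$ (which would introduce factors of $m\varrho^{m-1}$); and the drift estimate depends solely on $\nabla\cdot b$. The regularization-and-passage-to-limit via Lemma~\ref{approximation} is routine and does not reintroduce any $m$-dependence.
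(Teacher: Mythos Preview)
Your approach is the standard $L^1$-contraction argument that the paper defers to \cite{PQV}, and the overall structure (including Part~(1)) is correct.

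There is one genuine slip in step~(a) of Part~(2). You assert that $\nabla(\varrho_m^m-(\varrho_m')^m)\cdot\nabla w\geq 0$ pointwise ``by monotonicity'', but monotonicity of $u\mapsto u^m$ only gives the pointwise relation $(\varrho_m^m-(\varrho_m')^m)\,w\geq 0$; the product of the \emph{gradients} has no definite sign in general. The standard fix is to multiply instead by $\eta_\epsilon(\Psi)$ with $\Psi:=\varrho_m^m-(\varrho_m')^m$, so that the diffusion term becomes $-\int\eta_\epsilon'(\Psi)\,|\nabla\Psi|^2\,dx\leq 0$ transparently; since $\sgn(\Psi)=\sgn(w)$, the $\epsilon\to 0$ limit is unchanged and your treatment of the source term carries over verbatim (for the drift, just pass to the limit first and use $\mathds{1}_{\{w>0\}}\nabla\cdot(wb)=\nabla\cdot(w_+ b)$ on the smooth approximants). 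Alternatively, keep $\eta_\epsilon(w)$, let $\epsilon\to 0$ by dominated convergence, and then invoke Kato's inequality $\mathds{1}_{\{\Psi>0\}}\Delta\Psi\leq\Delta\Psi_+$. Either way the constants remain $m$-independent.
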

We shall omit its proof; one may follow the argument in \cite{PQV} which studies a simpler case.



\section{The Aronson-B\'enilan Estimate}
\label{sec: AB estimate}
In this section, we establish the classic AB estimate, which is a semi-convexity estimate for the pressure variable $p_m$, with explicit dependence on $m$. 
In the following proposition, we allow $p^0$ to be discontinuous and have unbounded support.

\begin{proposition}\label{T.3.1}
Assume \eqref{1.7} and \eqref{cond}, and let $p_m\in L^\infty(Q_T)$ be a solution to \eqref{1.1} with non-negative initial data $p_m^0$ such that $(p_m^0)^{\frac{1}{m-1}}\in L^1(\mathbb{R}^d)\cap L^\infty(\mathbb{R}^d)$. Then there exists a constant $C_0$ independent of $m$ and $T$ such that
\begin{equation}\label{ineq fund}
    \Delta p_m(x,t)+\nabla\cdot{ b }(x,t)+f(x,t,p_m(x,t)) \geq -\frac{1}{m-1}\left({C_0}+\frac{1}{t}\right)
\end{equation}
in $Q_T$ in the sense of distribution.
Here the constant $C_0$ has the expression
\beq\lb{3.1}
C_0=C_d\left(1+\sigma^{-1}\right)\left(1+\|p_m\|_{\infty}\right)\left(1+\|\pa_p f\|_\infty\right)
\left(1+\|{ b }\|_{C_{x,t}^{2,1}}^2+\|f\|_{\dot{C}_{x,t}^1}^2+\|f\|_\infty\right),
\eeq
where $C_d$ is dimensional, $\|f\|_{\dot{C}_{x,t}^1}:=\underset{{Q_T\times [0,\|p_m\|_\infty]}}{\sup}\left[|\na_x f|+|\partial_t f|\right]$ and $\|f\|_\infty:=\underset{{ Q_T\times [0,\|p_m\|_\infty]}}{\sup}|f|$.

\end{proposition}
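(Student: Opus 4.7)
The plan is to follow the classical Aronson--B\'enilan argument, adapted to the presence of drift and source terms, while carefully tracking the dependence on $m$. First, by Lemma~\ref{approximation} I reduce to the case of a strictly positive classical solution $p=p_m$ of \eqref{1.1} with smooth $b$ and $f$; the distributional bound \eqref{ineq fund} for a general weak solution then follows by passing to the limit. Lemma~\ref{uniformb} moreover supplies $m$-independent bounds on $\|p_m\|_\infty$ and on the support of $p_m$, which is what ultimately enables the lower-order terms below to be absorbed with $m$-independent constants.

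The natural quantity is
\[
w := \Delta p + \nabla\cdot b + f(x,t,p),
\]
in terms of which \eqref{1.1} reads $\partial_t p = (m-1)pw + |\nabla p|^2 + \nabla p\cdot b$. Differentiating in $t$ and expanding $\Delta(pw)$, $\Delta|\nabla p|^2$ and $\Delta(\nabla p\cdot b)$, a direct computation produces an evolution for $w$ of the schematic form
\[
\partial_t w = (m-1)p\,\Delta w + \mathbf{V}\cdot\nabla w + (m-1)w^2 - (m-1)\alpha\,w + 2|D^2 p|^2 + \mathcal{R},
\]
where $\mathbf{V}=2m\nabla p + b$, $\alpha := \nabla\cdot b + f - f_p\,p$, and $\mathcal{R}$ gathers all remaining contributions (those linear in $\nabla p$ or $D^2 p$, the quadratic $-f_p|\nabla p|^2$, and purely lower-order pieces controlled by the prescribed norms of $b$ and $f$). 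The key structural point is that \eqref{cond} gives $\alpha\geq\sigma>0$, so whenever $w$ is very negative the term $-(m-1)\alpha\,w$ is \emph{positive} and reinforces the coercive $(m-1)w^2$.

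The third step is to absorb $\mathcal{R}$. Using $|D^2 p|^2 \geq (\Delta p)^2/d$ together with $(\Delta p)^2 \geq w^2/2 - (\nabla\cdot b + f)^2$, and applying Young's inequality to the mixed terms $2\,\mathrm{tr}(D^2 p\,\nabla b)$, $-2\nabla p\cdot(\nabla(\nabla\cdot b)+\nabla_x f)$, $\nabla p\cdot \Delta b$, and the quadratic $-f_p|\nabla p|^2$ --- the $|D^2 p|^2$ pieces are absorbed against the positive $2|D^2 p|^2$, and the $|\nabla p|^2$ pieces against $\|\partial_p f\|_\infty$ together with a standard uniform-in-$m$ interior Lipschitz bound on $p_m$ --- I arrive at
\[
\partial_t w - (m-1)p\,\Delta w - \mathbf{V}\cdot\nabla w \;\geq\; (m-1)w^2 - (m-1)\sigma\,w - K,
\]
with $K$ dominated by the right-hand side of \eqref{3.1}. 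The barrier $\phi(t) := -\frac{1}{m-1}\bigl(C_0+\frac{1}{t}\bigr)$, with $C_0$ of the form \eqref{3.1} (in particular $C_0\geq K/\sigma$), then satisfies by a direct ODE check --- exploiting the identity $(m-1)\bigl(\tfrac{1}{(m-1)t}\bigr)^2 = \tfrac{1}{(m-1)t^2}$ ---
\[
\phi'(t) \;\leq\; (m-1)\phi(t)^2 - (m-1)\sigma\,\phi(t) - K.
\]
A standard parabolic comparison for $w-\phi$ gives $w\geq\phi$ on $Q_T$: at a first interior contact point one has $\partial_t(w-\phi)\leq 0$, $\nabla w = 0$, $\Delta w \geq 0$, contradicting the differential inequality. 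Behavior at spatial infinity is harmless, since outside the uniform support of $p$ one has $w\equiv \nabla\cdot b + f(\cdot,\cdot,0)$, which is bounded and hence trivially above $\phi$ once $C_0$ is large.

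The main obstacle I anticipate is the bookkeeping in the absorption step: ensuring that every contribution to $\mathcal{R}$ is dominated with an $m$-independent prefactor, so that $K$, and hence $C_0$, depend only on the norms listed in \eqref{3.1}. The quadratic contribution $-f_p|\nabla p|^2$ is the most delicate --- it is precisely what forces the factor $(1+\|\partial_p f\|_\infty)$ in \eqref{3.1} --- and relies on the uniform-in-$m$ gradient control for $p_m$ provided by the $L^\infty$ and finite-propagation estimates of Section~\ref{sec: prelim results on PME model}.
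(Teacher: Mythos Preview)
Your overall strategy --- derive an evolution inequality for $q=\Delta p+\nabla\cdot b+f$ and compare against a barrier --- matches the paper's, and your barrier ODE check is correct. However, there is a genuine gap in the absorption step. You propose to kill the term $-f_p|\nabla p|^2$ (and more generally all $|\nabla p|^2$ contributions in $\mathcal{R}$) by invoking ``a standard uniform-in-$m$ interior Lipschitz bound on $p_m$ provided by the $L^\infty$ and finite-propagation estimates of Section~\ref{sec: prelim results on PME model}.'' No such bound is proved there: Section~\ref{sec: prelim results on PME model} gives only uniform $L^\infty$ and support bounds (Lemma~\ref{uniformb}) and $L^1$-stability (Theorem~\ref{thm: properties of solutions to PME}). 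Uniform-in-$m$ gradient estimates for PME-type equations are not elementary consequences of $L^\infty$/support control; they typically \emph{require} the AB estimate, so the argument would be circular. Without that control, a purely time-dependent barrier $\phi(t)$ cannot close, because nothing in the differential inequality for $q-\phi$ bounds the stray $|\nabla p|^2$ term.

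The paper circumvents this by choosing a barrier that depends on the solution itself:
\[
w(x,t)=-\frac{C_1-C_2\,p_m(x,t)}{m-1}-\frac{1}{(m-1)(t+\tau)}.
\]
The point of the extra $\frac{C_2}{m-1}p_m$ piece is that $\nabla w=\frac{C_2}{m-1}\nabla p_m$, so the transport term $2m\,\nabla p_m\cdot\nabla w$ in $\mathcal{L}_m(w)$ produces $\frac{2mC_2}{m-1}|\nabla p_m|^2$, which directly absorbs the bad $-(1+f_p)|\nabla p_m|^2$ once $C_2$ is chosen of order $1+\|\partial_p f\|_\infty$ (with a mild $m$-dependent adjustment for $m\in(1,2)$). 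This is exactly what accounts for the factor $(1+\|\partial_p f\|_\infty)$ in \eqref{3.1}, and it removes any need for a priori Lipschitz information. If you replace your $\phi(t)$ by this $p_m$-dependent barrier, the rest of your outline goes through.
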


\begin{proof}
In view of Lemma \ref{approximation}, it suffices to consider smooth ${ b }$ and $f$, and strictly positive smooth solutions.  Indeed, if \eqref{ineq fund}--\eqref{3.1} hold for the approximate smooth solutions, the conclusion follows by passing to the limit.


Assume that $p_m$ is strictly positive and smooth.
Let
\[
q(x,t):=\Delta p_m(x,t)+F(x,t,p_m(x,t)),\quad F(x,t,p_m(x,t)):=\nabla\cdot{ b }(x,t)+f(x,t,p_m(x,t)).
\]
$F$ is uniformly bounded since $p_m$ is uniformly bounded.
For simplicity, let us write 
\begin{align*}
&f_t:=\partial_tf(x,t,p)|_{p=p_m(x,t)},\quad  f_p:=\partial_pf(x,t,p)|_{p=p_m(x,t)},\\
&
F_t:=\na\cdot \pa_t b+f_t,\quad 
F_x:=\na\cdot \pa_x b + \pa_xf(x,t,p)|_{p = p_m(x,t)}.
\end{align*}
Then by \eqref{1.1} and direct calculation,
\beq
\partial_t p_m =(m-1)p_m q+\nabla p_m\cdot(\nabla p_m+b ),\lb{3.2}
\eeq
and 
\begin{align*}
&\partial_t \left[f(x,t,p_m(x,t))\right]=f_t+f_p\partial_t p_m =f_t+(m-1)f_{p}\,p_mq+f_{p}\nabla p_m\cdot(\nabla p_m+ b ),\\
&\nabla [F(x,t,p_m(x,t))] = F_x +f_p\nabla p_m.
\end{align*}
Now using \eqref{1.1} and the notation $(p_m)_i:=\partial_{x_i}(p_m)$, we get
\beq\lb{3.3}
\begin{aligned}
    q_t&=F_t+f_{p}(p_m)_t+(m-1)p_m\Delta q+2(m-1)\nabla p_m\nabla q+(m-1)q\Delta p_m+2\sum_{i,j} (p_m)_{ij}{{b}^i_j}\\
    &\quad +\nabla \Delta p_m\cdot b +\nabla p_m\cdot \Delta{{ b }}+2\nabla p_m\nabla\Delta p_m+2\sum_{i,j}| (p_m)_{ij}|^2\\
    &=F_t+f_p\big((m-1)p_mq+\nabla p_m\cdot(\nabla p_m+b )\big)+(m-1)\big(p_m\Delta q+q(q-F)\big)+2(m-1)\nabla p_m\nabla q\\
    &\quad +2\nabla p_m \nabla (q-F)+2\sum_{i,j} (p_m)_{ij}{{b}^i_j}+2\sum_{i,j}| (p_m)_{ij}|^2+\nabla (q-F)\cdot{ b }+\nabla p_m\cdot \Delta{{ b }}\\
    &=(m-1)(p_m\Delta q+q(q-F+f_{p}\,p_m))+2m\nabla p_m\nabla q-2\nabla p_m \cdot F_x+2\sum_{i,j} (p_m)_{ij}{{b}^i_j}\\
    &\quad +2\sum_{i,j}| (p_m)_{ij}|^2+\nabla q\cdot{ b }- F_x\cdot{ b }+\nabla p_m\cdot \Delta{{ b }}+F_t-f_p|\nabla p_m|^2\\
    &\geq (m-1)(p_m\Delta q+q(q-F+f_{p}\,p_m))+2m\nabla p_m\nabla q-\eps|\nabla q|^2- (1+f_{p})|\nabla p_m|^2-A_\eps\\
    &=:\calL_m(q),
\end{aligned}
\eeq
where we used the Young's inequality, and
\[
A_\eps:= \sup_{(x,t)\in \bbR^d\times [0,T)}\Big| 2| F_x|^2+\sum_{i,j}|b_j^i|^2/2+|{ b }|^2/(4\eps)+F_x\cdot{{ b }}+|\Delta{ b }|^2/2-F_t\,\Big|.
\]
We shall view $p_m>0$ as a known function, so $\calL_m$ in \eqref{3.3} is a quasilinear elliptic operator.

Let us now apply a barrier argument to show that $q$ is uniformly bounded from below for all $m>1$.
With some $\tau,C_1,C_2>0$ to be chosen, such that $C_1\geq C_2\|p_m\|_\infty$, we set
\[
w:=-\frac{C_1-C_2p_m}{m-1}-\frac{1}{(m-1)(t+\tau)}.
\]
It is clear that $w\leq -\frac{1}{(m-1)(t+\tau)}<0$, and, since $p_m^0$ is smooth, by taking $\tau>0$ to be sufficiently small, we have $q\geq w$ at $t=0$.
Next since $F-f_{p}\,p_m\geq \sigma$ by the assumption and $\Delta p_m=q-F$, we obtain
\begin{align*}
&\;(m-1)(p_m\Delta w+w(w-F+f_{p}\,p_m))\\
=&\;C_2p_m\Delta p_m+(m-1)w^2+(m-1)w(-F+f_{p}\,p_m)\\
\geq &\; C_2p_m q-C_2p_mF+\frac{1}{(m-1)(t+\tau)^2}-(m-1)w\sigma\\
\geq &\; \frac{1}{(m-1)(t+\tau)^2}+C_2p_m q-C_2\|p_m\|_\infty\|F\|_\infty+(C_1-C_2\|p_m\|_\infty)\sigma\\
&\; +\frac{C_2}{m-1}\nabla p_m\cdot(\nabla p_m+{ b })-\frac{C_2}{m-1}\left(\frac32|\nabla p_m|^2+\frac12\|{ b }\|_\infty^2\right).
\end{align*}
Thus, also using \eqref{3.2}, we get for $\calL_m$ from
the last line of \eqref{3.3},
\begin{align*}
\calL_m(w)&\geq \frac{1}{(m-1)(t+\tau)^2}+\frac{C_2}{m-1}\big((m-1)p_m q+\nabla p_m\cdot(\nabla p_m+{ b })\big)+(C_1-C_2\|p_m\|_\infty)\sigma\\
&\qquad -\frac{C_2}{m-1}\left(\frac32|\nabla p_m|^2+\frac12\|{ b }\|_\infty^2\right)-C_2\|p_m\|_\infty\|F\|_\infty+\frac{2C_2m}{m-1}|\nabla p_m|^2-\frac{ C_2^2\eps}{(m-1)^2}|\na p_m|^2\\
&\qquad -(1+f_p)|\nabla p_m|^2-A_\eps\\
&\geq \frac{1}{(m-1)(t+\tau)^2}+\frac{C_2}{m-1}(p_m)_t+(C_1- C_2\|p_m\|_\infty)\sigma\\
&\qquad +\left(\Big(2+\frac{1}{2(m-1)}\Big)C_2-\frac{C_2^2\eps}{(m-1)^2}-1-\|f_p\|_\infty\right) |\nabla p_m|^2- A_\eps',
\end{align*}
where $A'_\eps:=A_\eps+\frac{C_2}{2(m-1)}\|{ b }\|_\infty^2+C_2\|p_m\|_\infty\|F\|_\infty$.

Now we set $C_1:=C_2\|p_m\|_\infty+A_{\eps}'/\sigma$, and define
\begin{align*}
&C_2:=1+\|f_p\|_\infty,\quad \eps:=1/(1+\|f_p\|_\infty) &&\text{if }m\geq 2, \\
&C_2:=4(m-1)(1+\|f_p\|_\infty),\quad \eps:=1/(16(1+\|f_p\|_\infty)) &&\text{if }m\in(1,2).
\end{align*}
Note that $\|p_m\|_\infty$, $\frac1\eps$ and $\frac{C_2}{m-1}$ are bounded from above by a constant independent of $m$, and so is $A'_\eps$.
With these choices of parameters, it follows that
\[
w_t=\frac{1}{(m-1)(t+\tau)^2}+\frac{C_2}{m-1}(p_m)_t \leq \calL_m(w).
\]
Recall \eqref{3.3} and $q\geq w$ at $t=0$. Therefore by the comparison principle, we conclude that
\[
\Delta p_m+\nabla\cdot b +f
=q\geq w 
\geq -\frac{1}{m-1}\left(C_1+\frac{1}{t+\tau}\right)
\geq -\frac{1}{m-1}\left(C_1+\frac{1}{t}\right),
\]
which is \eqref{ineq fund} for smooth solutions for all $m>1$.

Finally, \eqref{3.1} is obtained via tracking the dependence of $C_1$. We comment that $\|F\|_\infty\leq \|b\|_{C^1_{x,t}}+\|f\|_\infty$ and $\|f\|_\infty\leq \|f(\cdot,\cdot,0)\|_\infty+\|f_p\|_\infty\|p_m\|_\infty$.
\end{proof}

\begin{remark}\lb{R.1}



Improvements of \eqref{ineq fund} are possible under strong assumptions. 
\begin{enumerate}[(1)]
\item If we further assume \eqref{R.1.1}, i.e., $q(x,0)\geq 0$,
then \eqref{ineq fund} can be improved to become
\begin{equation}\lb{R.1.2}
    \Delta p_m(x,t)+\nabla\cdot{ b }(x,t)+f(x,t,p_m(x,t)) \geq -\frac{C_0}{m-1}
    \quad \hbox{ in }\mathbb{R}^d\times (0,T).
\end{equation}
Indeed, it suffices to take $\tau\to +\infty$ in the above proof.

\item If $b\equiv 0$ and $f(x,t,p)=f(p)$, then instead of \eqref{cond}, one can assume
\beq\lb{cond'}
f(p),\,-f_p(p)\geq 0.
\eeq
This is because, under the new condition, 
\eqref{3.3} gives
\begin{align*}
q_t\geq (m-1)(p_m\Delta q+q(q-F+f_{p}\,p_m))+2m\nabla p_m\nabla q- f_p|\nabla p_m|^2=:\calL_m(q).
\end{align*}
Let $w$ be the same as before.
By \eqref{cond'} and picking $C_1:= 2C_2\|p_m\|_\infty$, we find
\[
(m-1)w(-F+f_pp_m)-C_2p_mF\geq 0,
\]
and thus,
\[
\calL_m(w)\geq \frac{C_2}{m-1}(p_m)_t+\frac{1}{(m-1)(t+\tau)^2}+\left(\frac{2m-1}{m-1}C_2-{f_p}\right) |\nabla p_m|^2 \geq w_t.
\]
The rest of the proof is identical.
\end{enumerate}
\end{remark} 

Next we state a monotonicity property of the positive set of a solution along the streamlines over time. Recall that $\Omega_{p_m}(t)=\{p_m(\cdot,t)>0\}$.

\begin{lemma}\label{streamline}
For $m>1$, let $p_m$ solve \eqref{1.1}. Then for any $x_0\in\Omega_{p_m}(t_0)$ with $t_0>0$, 
\begin{equation}\label{mono stream}
    p_m(X(x_0,t_0;s),t_0+s)\geq e^{-C_{t_0}s}p_m(x_0,t_0)>0,
\end{equation}
where $C_{t_0}:=C_0+\frac{1}{t_0}$.
Consequently, for $X(x,t;s)$ given in \eqref{ode},
\[
\{X(x,t;s)\,|\, x\in {\Omega_{p_m}(t)}\}\subseteq \Omega_{p_m}(t+s) \quad \hbox{for all } s,t>0.
\]
\end{lemma}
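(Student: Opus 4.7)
The plan is to differentiate $p_m$ along a streamline emanating from $(x_0,t_0)$ and apply the Aronson–Bénilan estimate from Proposition \ref{T.3.1} to control the result. Specifically, I would set $\phi(s):=p_m(X(x_0,t_0;s),\,t_0+s)$ and compute, using \eqref{1.1} and \eqref{ode},
\begin{align*}
\phi'(s)
&=\partial_t p_m(X,t_0+s)+\nabla p_m(X,t_0+s)\cdot\partial_s X\\
&=(m-1)p_m\bigl(\Delta p_m+\nabla\cdot b+f(x,t,p_m)\bigr)+\nabla p_m\cdot(\nabla p_m+b)-\nabla p_m\cdot b\\
&=(m-1)p_m\bigl(\Delta p_m+\nabla\cdot b+f(x,t,p_m)\bigr)+|\nabla p_m|^2,
\end{align*}
evaluated at $(X(x_0,t_0;s),t_0+s)$. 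The second term is nonnegative, so Proposition \ref{T.3.1} applied at time $t_0+s\geq t_0$ gives
\[
\Delta p_m+\nabla\cdot b+f(x,t,p_m)\;\geq\;-\frac{1}{m-1}\Bigl(C_0+\frac{1}{t_0+s}\Bigr)\;\geq\;-\frac{C_{t_0}}{m-1}.
\]
Hence $\phi'(s)\geq -C_{t_0}\phi(s)$, and Gr\"onwall's inequality delivers \eqref{mono stream}.

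To make this rigorous despite the fact that $p_m$ is only a continuous weak solution (so neither $\Delta p_m$ nor the chain rule computation is a priori classical, and the AB bound only holds distributionally), I would first invoke the approximation Lemma \ref{approximation}: pick smooth mollifications $b^k\to b$, $f^k\to f$ uniformly, and obtain strictly positive classical solutions $p_m^k$ to the corresponding version of \eqref{1.1} converging to $p_m$ locally uniformly on $\bbR^d\times(0,T)$. For these classical solutions the computation of $\phi_k'(s)$ is unambiguous, and the AB estimate \eqref{ineq fund} holds pointwise with a constant $C_0^k$ bounded uniformly in $k$ (by the formula \eqref{3.1} together with the convergence of the relevant norms of $b^k,f^k$). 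Running Gr\"onwall for $\phi_k(s)=p_m^k(X^k(x_0,t_0;s),t_0+s)$, where $X^k$ denotes the streamline of $-b^k$, yields the bound for each $k$; passing $k\to\infty$ and using continuous dependence of the ODE \eqref{ode} on the vector field (so $X^k\to X$ locally uniformly) together with the local uniform convergence $p_m^k\to p_m$ gives \eqref{mono stream}.

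The consequence about set inclusion is then immediate: if $x\in\Omega_{p_m}(t)$, then $p_m(x,t)>0$, and \eqref{mono stream} yields $p_m(X(x,t;s),t+s)\geq e^{-C_t s}p_m(x,t)>0$, i.e.\ $X(x,t;s)\in\Omega_{p_m}(t+s)$.

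I do not expect a serious obstacle here. The only subtle point is the regularity issue, which is handled cleanly through the approximation scheme together with the explicit, $k$-uniform form of the constant in \eqref{3.1}; the rest is an ODE comparison along characteristics of the drift.
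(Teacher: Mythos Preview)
Your proposal is correct and follows essentially the same approach as the paper: differentiate $p_m$ along the streamline, use \eqref{1.1} and the streamline ODE to obtain $\phi'(s)=(m-1)p_m(\Delta p_m+\nabla\cdot b+f)+|\nabla p_m|^2$, drop the nonnegative gradient term, invoke the Aronson--B\'enilan estimate at times $\geq t_0$ to get $\phi'(s)\geq -C_{t_0}\phi(s)$, and conclude by Gr\"onwall; the regularity issue is handled in both cases by passing through the smooth approximations from Lemma \ref{approximation}. Your treatment of the approximation step is somewhat more explicit than the paper's, but the argument is the same.
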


\begin{proof}
Fix $x_0\in\Omega_{p_m}(t_0)$ with $t_0>0$. It suffices to consider smooth approximations of $p_m$, and prove that for any $s>0$, $p_m(X(x_0,t_0;s),t_0+s)$ has a positive lower bound independent of the approximations.

By Proposition \ref{T.3.1}, we have $\Delta p_m+\nabla\cdot{ b }+f\geq -\frac{C_{t_0}}{m-1}$ for $t\geq t_0$. It follows from \eqref{1.1} that for all $s>0$,
\begin{align*}
   \partial_s p_m(X(x_0,t_0;s),t_0+s)= ((p_m)_t -\nabla p_m\cdot { b })(X(x_0,t_0;s),t_0+s)
    \geq -C_{t_0} p_m(X(x_0,t_0;s),t_0+s),
\end{align*}
which yields \eqref{mono stream}.
\end{proof}

Now we state a result on the incompressible limit of the system \eqref{main}--\eqref{main2} as $m\to +\infty$.
Let us point out that this is mainly for obtaining the $L^1$-convergence of $p_m$ to $p_\infty$ in $Q_T$, which will be used as a key assumption in Section \ref{sec: convergence of fb} to show the convergence of the free boundaries.
As a result, the following theorem, as well as the conditions associated to it, can be replaced by any result that would imply the $L^1(Q_T)$-convergence of the pressure.
Here for simplicity, we only present the incompressible limit result without justifying the complementarity condition (i.e.\;the second equation in \eqref{1.2}), as that part is not needed for proving the pressure convergence.
Interested readers may consult the literature mentioned in Section \ref{sec: intro} for more in-depth discussions on the incompressible limit.

\begin{theorem}
\label{thm: incompressible limit}
Assume \eqref{1.7}, and that $|\pa_{pp}f |+ |\pa_{tp}f|$ is locally finite in $Q_T\times [0,+\infty)$. 
Let $\varrho_m^0$ and $\varrho^0$ satisfy \eqref{1.8}--\eqref{assumption: uniform BV norm for initial data}.
Additionally, we assume either (a) $\pa_p f\leq 0$ and \eqref{R.1.1} hold; or (b) $\pa_p f\leq -\alpha$ for some $\alpha>0$.

Let $\varrho_m\geq0$ solve \eqref{main} in $Q_T$ with the initial data $\varrho_m^0$. 
Then there exists a unique weak solution $(\varrho_\infty, p_\infty)$ to 
\begin{align*}
&\partial_t\varrho_\infty =\D p_\infty + \na\cdot (\varrho_\infty  b )+\varrho_\infty f(x,t,p_\infty)\mbox{\;\;in distribution,}\\
&\varrho_\infty\leq 1,\quad
p_\infty(1-\varrho_\infty)=0\mbox{\;\;almost everywhere}
\end{align*}
in $Q_T$ with the initial data $\varrho_\infty(x,0) = \varrho^0(x)$, satisfying that 
\begin{enumerate}[(i)]
\item $\varrho_\infty , p_\infty\in L^\infty\cap BV (Q_T)$, and $\na p_\infty \in L^2(Q_T)$;
\item $\varrho_\infty$ and $p_\infty$ are compactly supported in $\bbR^d\times [0,T]$;
\item for any $q\in [1,+\infty)$, 
\[
\mbox{$\varrho_m\to \varrho_\infty$ in $L^q(Q_T)$, and $p_m\to p_\infty$ in $L^q(Q_T)$ as $m\to +\infty$}.
\]
\end{enumerate}
In particular, $\{p_m\}_m$ converges to $p_\infty$ in $L^1(Q_T)$.
\end{theorem}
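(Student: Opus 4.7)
The plan is to follow a standard incompressible-limit scheme based on uniform-in-$m$ a priori estimates, compactness, passage to the limit in the weak formulation, and an $L^1$-contraction for uniqueness. The main work is to produce enough compactness to identify the nonlinear term $\na\varrho_m^m=\varrho_m\na p_m$ in the limit and to justify the complementarity $p_\infty(1-\varrho_\infty)=0$.

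First I would assemble uniform bounds. Lemma \ref{uniformb} already gives $\sup_m\|p_m\|_{L^\infty(Q_T)}<\infty$, uniform-in-$m$ compact spatial support of $(\varrho_m,p_m)$, and hence $\varrho_m\le (\tfrac{m-1}{m}\|p_m\|_\infty)^{1/(m-1)}\to 1$. Using assumption \eqref{assumption: uniform BV norm for initial data} and Kato-type inequalities applied to the equation for $\pa_i\varrho_m$, I would obtain $\|\na\varrho_m(\cdot,t)\|_{L^1}\le C$ uniformly in $m,t$; the term $\na\cdot b$ is absorbed via Gronwall thanks to $\|\na b\|_\infty<\infty$, and $\pa_p f\le 0$ (case (a)) or $\pa_p f\le -\alpha$ (case (b)) makes the $\varrho_m f$ term harmless. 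A uniform temporal $BV$ bound on $\varrho_m$ follows directly from the equation, the spatial $BV$ bound, and $\sup_m\|\D(\varrho_m^0)^m\|_{L^1}<\infty$. Testing the pressure equation against $p_m$ (or equivalently \eqref{main} against $p_m$) produces a uniform energy estimate
\begin{equation*}
\int_{Q_T}|\na p_m|^2\,dx\,dt \le C,
\end{equation*}
again using $\pa_p f\le 0$ or $\pa_p f\le -\alpha$ to dominate the source. Finally, Proposition \ref{T.3.1} provides the one-sided Laplacian bound on $p_m$, which is sharp in case (a) since \eqref{R.1.1} upgrades it to \eqref{R.1.2} without the $1/t$ singularity.

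Second, from the uniform $BV(Q_T)$ bound and the uniform compact support, I would extract a subsequence (not relabelled) so that $\varrho_m\to\varrho_\infty$ in $L^q(Q_T)$ for every $q\in[1,\infty)$, and $p_m\to p_\infty$ in the same sense, with $\na p_{m}\rightharpoonup \na p_\infty$ weakly in $L^2(Q_T)$; items (i)-(ii) follow. Passing to the limit in \eqref{def sol2} is now routine: strong $L^2$ convergence of $\varrho_m$ times weak $L^2$ convergence of $\na p_m$ identifies $\na\varrho_m^m=\varrho_m\na p_m\rightharpoonup \varrho_\infty\na p_\infty$, while continuity of $f$ and dominated convergence handle the source. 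The constraint $\varrho_\infty\le 1$ comes from the bound on $\varrho_m$ recalled above, and the complementarity $p_\infty(1-\varrho_\infty)=0$ follows because at any Lebesgue point where $\varrho_\infty<1$ we have $\varrho_m^{m-1}\to 0$ exponentially in $m$, so $p_m=\tfrac{m}{m-1}\varrho_m^{m-1}\to 0$.

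Third, for uniqueness of the limit and therefore convergence of the whole sequence (not merely a subsequence), I would run an $L^1$-stability argument at the level of finite $m$ and pass to the limit. Theorem \ref{thm: properties of solutions to PME}(2) already gives uniform-in-$m$ $L^1$-stability of $\varrho_m$ with respect to initial data under $\pa_p f\le 0$, so the same inequality descends to $(\varrho_\infty,p_\infty)$ by the strong $L^1$ convergence, yielding uniqueness. The main obstacle, and where the dichotomy (a)/(b) matters, is closing this stability in the presence of the drift $b$ and the obstacle $\{p_\infty>0\}$: the naive Kato computation produces a term $\int \na\cdot b\,|\varrho_\infty-\tilde\varrho_\infty|$ which one must absorb. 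In case (b) the strictly negative $\pa_p f\le-\alpha$ provides the missing absorption directly through the source; in case (a), the sharper AB estimate \eqref{R.1.2} is used to derive a uniform modulus of continuity in $t$ for $p_m$ near $t=0$, after which the standard doubling-of-variables argument on $\varrho_m$ (Crandall--Liggett-type) closes uniformly in $m$. Once uniqueness of $(\varrho_\infty,p_\infty)$ is established, the whole sequence $\{(\varrho_m,p_m)\}$ converges, giving the asserted $L^1(Q_T)$ convergence of $p_m$ to $p_\infty$.
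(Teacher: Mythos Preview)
Your overall strategy matches the paper's --- uniform a priori estimates, compactness, passage to the limit, then uniqueness --- but two genuine gaps remain.

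First, you have not justified strong convergence of $p_m$. For $\varrho_m$ you correctly assemble a uniform $BV(Q_T)$ bound, but for $p_m$ you only record $\na p_m\in L^2(Q_T)$ and the one-sided AB estimate; neither yields temporal compactness, so the claim ``$p_m\to p_\infty$ in the same sense'' is unsupported. The paper closes this by proving $\|\pa_t p_m\|_{L^1(Q_T)}\le C$: from the pressure equation one writes
\[
\|\pa_t p_m\|_{L^1}\le \int_{Q_T}\pa_t p_m\,dx\,dt + 2(m-1)\int_{Q_T} p_m\,(\D p_m+\na\cdot b+f)_-\,dx\,dt + C\|\na p_m\|_{L^2},
\]
and the middle term is controlled either by the AB estimate \eqref{R.1.2} (case (a)) or, in case (b), by the by-product $\int_{Q_T}\varrho_m|\pa_p f|\,|\pa_t p_m|\le C$ of the $\pa_t\varrho_m$ computation combined with $|\pa_p f|\ge\alpha$. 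This is precisely where the (a)/(b) dichotomy enters the compactness argument, not in the density stability.

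Second, your uniqueness argument does not establish what the theorem asserts. Passing the finite-$m$ $L^1$-stability of Theorem~\ref{thm: properties of solutions to PME}(2) to the limit only shows that the limit is independent of the approximating family $\{\varrho_m^0\}$; it does not show that an arbitrary weak solution $(\tilde\varrho_\infty,\tilde p_\infty)$ in the stated class coincides with this limit, which is what is needed to upgrade subsequential convergence to convergence of the full sequence. A direct Kato/Crandall--Liggett argument on the limit equation also fails: since the relation between $\varrho_\infty$ and $p_\infty$ is a maximal monotone graph rather than a function, $\sgn(\varrho_1-\varrho_2)$ need not agree with $\sgn(p_1-p_2)$, and multiplying $\D(p_1-p_2)$ by $\sgn(\varrho_1-\varrho_2)$ produces no useful sign. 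The paper instead runs the Hilbert duality method directly on the limit problem: one rewrites the difference equation as $(\varrho_1-\varrho_2+p_1-p_2)[A\pa_t\psi+B\D\psi-Ab\cdot\na\psi+(Af_1-BD)\psi]=0$ with $A,B\in[0,1]$ and $D=-\varrho_2(f_1-f_2)/(p_1-p_2)\ge 0$, solves an approximate backward dual parabolic problem with regularized coefficients $A_n,B_n\in[n^{-1},1]$, derives $n$-independent bounds on $\psi_n$ and on $(B_n/A_n)^{1/2}(\D\psi_n-D_n\psi_n)$ in $L^2$, and shows the remainder vanishes. The hypothesis that $|\pa_{pp}f|+|\pa_{tp}f|$ is locally finite is used exactly here, to bound $\pa_t D$ in $L^1$.
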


As is mentioned before, the incompressible limit has been justified for various special cases of \eqref{main}--\eqref{main2}.
%
For example, this has been proved under the conditions that
\[
\text{$\{\varrho_m^0\}_{m>1}$ and $\{p_m^0\}_{m>1}$ satisfy suitable uniform bounds, and }
\lim_{m,l\to\infty} \big\|\varrho_m^0-\varrho_l^0\big\|_{L^1}=0,
\]
and either one of the following assumptions holds:
\begin{enumerate}
\item $b\equiv 0$, and $f=f(p)$ being suitably smooth satisfies that $f_p(p)<0$ and $f(p_M)=0$ for some $p_M>0$
\cite[Theorem 2.1]{PQV};
\smallskip

\item $b=\nabla \Phi(x,t)$ is suitably smooth, and $f = f(p)$ satisfies the same assumptions as in the previous case 
\cite[Theorem 1.1]{David_S}.
\smallskip

\item \eqref{main2} is modified into a more general form, and $b = b(x,t)$ and $f=f(x,t)$ are smooth 
\cite[Theorem 2.5]{chu2022}.

\end{enumerate}
However, we assumed $b$ and $f$ to have more general forms in \eqref{main}--\eqref{main2}.
Although the proof is standard, for the sake of completeness, we shall present it in Appendix \ref{sec: proof of incompressible limit}.

\section{Expansion of Positive Sets along Streamlines}\lb{S3}

In this section we study finer properties on the expansion of the positive set $\{p_m>0\}$ along the streamlines determined by the drift $b$.

The idea originates from \cite{CFregularity}, which studied the PME, and it is used later in \cite{kimzhang21}. The key step is to measure the time the free boundary moves away from a given point by a distance $R$, in terms of the average of the pressure in a ball of size $R$. Then one is able to obtain a Hausdorff distance estimate of the free boundaries in terms of the local spatial $L^1$-norm of the pressure. 
More importantly, we observe that the constants in this property are independent of $m$, making it possible to study the convergence of the free boundaries as $m\to\infty$.

In this section, we will drop the subscript $m$ from $p_m$, but the dependence of constants on $m$ will be tracked carefully. The condition \eqref{cond} is assumed in the following lemmas only for the purpose of having the conclusions from Proposition \ref{T.3.1}; see Remark \ref{R.2}.

\begin{lemma}\label{L.4.1}
Assume \eqref{1.7} and \eqref{cond}. Let $m\geq2$, and let $p=p_m$ be given as in Proposition \ref{T.3.1}. There exists a universal constant $c_0\ll 1$ such that, for any $\eta_0>0$,
the following holds for all $t_0\geq \eta_0$ and $x_0\in\bbR^d$
with 
$\tau\leq \min\{c_0,c_0(m-1)\eta_0,\eta_0\}$:
for any given $R>0$, 
if
\begin{equation}\label{if}
p(\cdot,t_0)=0 \hbox{ in } B(x_0,R)\quad \hbox{and}\quad\mint_{B(X(x_0,t_0;\tau),R)}p(x,t_0+\tau)\, dx\leq \frac{c_0R^2}{\tau},
\end{equation}
then
\begin{equation}\label{then}
p(x,t_0+\tau)=0\quad \text{for}\quad x\in B(X(x_0,t_0;\tau),R/6).
\end{equation}
\end{lemma}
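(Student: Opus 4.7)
The plan is to adapt the Caffarelli--Friedman barrier argument from the porous medium theory to the equation \eqref{1.1}, working in a streamline-comoving frame so that the drift becomes a tangible lower-order term, and tracking the $m$-dependence carefully so that the smallness hierarchy $\tau\le \min\{c_0,c_0(m-1)\eta_0,\eta_0\}$ suffices to produce $m$-independent constants for $m\ge 2$.

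First, I would pass to the comoving frame by setting $q(y,s):=p(X(x_0,t_0;s)+y,\,t_0+s)$. Differentiating and invoking \eqref{ode} yields
\[
q_s=(m-1)\,q\,(\Delta q+\nabla\!\cdot\! b+f)+|\nabla q|^2+\nabla q\cdot \tilde b(y,s),
\]
where $\tilde b(y,s):=b(X(x_0,t_0;s)+y,t_0+s)-b(X(x_0,t_0;s),t_0+s)$ satisfies $|\tilde b(y,s)|\le \|\nabla b\|_\infty|y|$ and the remaining coefficients are uniformly bounded in terms of the universal constants. Hypothesis \eqref{if} becomes $q(\cdot,0)\equiv 0$ on $B(0,R)$ together with $\fint_{B(0,R)}q(\cdot,\tau)\le c_0R^2/\tau$, while \eqref{then} is the assertion $q(\cdot,\tau)\equiv 0$ on $B(0,R/6)$.

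Next, I would apply Proposition \ref{T.3.1} in this frame to get $\Delta q+\nabla\!\cdot\! b+f\ge -\tfrac{1}{m-1}(C_0+1/(t_0+s))$, and then use $m\ge 2$, $t_0+s\ge \eta_0$, and $\tau\le c_0(m-1)\eta_0$ to deduce the uniform bound $\Delta q\ge -M-c_0/\tau$ for a universal $M$. Consequently, $q+\tfrac{M+c_0/\tau}{2d}|y|^2$ is subharmonic in $y$ for each $s$, and a mean value inequality on $B(y_*,R/2)\subset B(0,R)$ at $s=\tau$, combined with the average bound, yields the pointwise estimate $q(y_*,\tau)\le C\,c_0\,R^2/\tau$ for all $y_*\in B(0,R/2)$ with a universal $C$ (the $O(R^2)$ correction from the quadratic corrector is dominated by $R^2/\tau$ since $\tau\le c_0\le 1$). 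The core step is then a barrier comparison: I would construct a radial supersolution of the form $\Phi(y,s)=A\bigl(|y|^2-\rho(s)^2\bigr)_+$ on $B(0,R/2)\times [\tau-\delta,\tau]$ for a suitable $\delta>0$, with $\rho(\tau)=R/6$, $\rho$ decreasing in $s$, and $A\sim c_0/\tau$ chosen so that $\Phi(\cdot,\tau)\ge q(\cdot,\tau)$ on $\partial B(0,R/2)$ via the pointwise bound just obtained and $\Phi(\cdot,s)\ge q(\cdot,s)$ on the entire lateral parabolic boundary by continuity over the short window. The supersolution inequality for $\Phi$ reduces to a differential inequality for $\rho$, and it is precisely the constraint $\tau\le c_0(m-1)\eta_0$ that absorbs the $(m-1)\,q\,\Delta q$ contribution into the leading balance; $\rho(\tau-\delta)$ is then determined by integrating backwards. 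Comparison (via Lemma \ref{approximation} and the standard PME comparison principle) yields $q\le \Phi$ in the slab, and in particular $q(\cdot,\tau)\equiv 0$ on $B(0,R/6)$, which is \eqref{then}.

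The main obstacle is calibrating $A$, $\rho$, and $\delta$ so that the supersolution inequality holds with $m$-independent constants: the explicit factor $(m-1)$ in front of the nonlinearity threatens to blow up as $m\to\infty$, and the interplay between the $(m-1)\,q\,\Delta q$ term, the $|\nabla q|^2$ term, and the corrector size $c_0/\tau$ from the AB estimate has to balance in a specific way. The triple smallness condition $\tau\le\min\{c_0,c_0(m-1)\eta_0,\eta_0\}$ appears to be exactly what is needed: $\tau\le\eta_0$ keeps the AB corrector meaningful, $\tau\le c_0(m-1)\eta_0$ kills the $m$-dependence coming from $\Delta q\ge -C/[(m-1)\eta_0]$ and from the $(m-1)$ in the equation, and $\tau\le c_0$ lets us choose $c_0$ as an absolute universal constant once all the above balances are satisfied.
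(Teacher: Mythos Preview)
Your overall architecture---comoving frame, Aronson--B\'enilan semi-convexity, mean-value to upgrade the average bound to a pointwise bound, then a barrier comparison---matches the paper's. But two specific choices in the barrier step would prevent the argument from closing with $m$-independent constants.

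\textbf{First gap: the lateral boundary control is not quantitative.} You obtain $q(y,\tau)\le Cc_0R^2/\tau$ for $|y|\le R/2$, but for a comparison on $B(0,R/2)\times[\tau-\delta,\tau]$ you also need a bound on $q$ along $\{|y|=R/2\}\times[\tau-\delta,\tau]$. Invoking ``continuity over the short window'' gives no control on how large $\delta$ can be taken independently of $m$; the modulus of continuity of $p_m$ is $m$-dependent. The paper resolves this by a second use of the AB estimate: from the equation one gets $q_s\ge -C_{\eta_0}\tau\,q-C\eps^2|x|^2$ pointwise (with $\eps=(\tfrac{1}{(m-1)\eta_0}+C)\tau$), and Gronwall then pushes the smallness at $s=\tau$ back to all $s\in(0,\tau)$, yielding a uniform bound $q\le C_1(c_0+\eps)$ on the whole cylinder $B(0,R/2)\times(0,\tau)$. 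This quantitative backward propagation is the missing ingredient; without it you cannot take $\delta=\tau$ and start the comparison from the known vanishing $q(\cdot,0)=0$.

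\textbf{Second gap: the barrier has the wrong sign of Laplacian.} Your profile $\Phi(y,s)=A(|y|^2-\rho(s)^2)_+$ satisfies $\Delta\Phi=2dA>0$ on its positive set. In the supersolution test
\[
\Phi_s-(m-1)\Phi(\Delta\Phi+F)-|\nabla\Phi|^2-\nabla\Phi\cdot\tilde b\ge 0,
\]
the term $(m-1)\Phi\Delta\Phi=2dA(m-1)\Phi$ is positive and grows linearly in $m$; balancing it against $\Phi_s=-2A\rho\rho'$ forces $|\rho'|\gtrsim (m-1)AR$, hence forces the time window to shrink like $\tau/(m-1)$. The constraint $\tau\le c_0(m-1)\eta_0$ does \emph{not} help here: it is designed to make the AB correction $\tfrac{\tau}{(m-1)\eta_0}$ small (needed in the mean-value step), not to cancel the explicit $(m-1)$ in the equation. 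The paper's fix is to use an annular barrier $\varphi$ solving $-\Delta\varphi=\eps$ with $\varphi=0$ on the shrinking inner sphere $\partial B_{r(t)}$ (here $r(t)=\tfrac13-\tfrac{t}{6}$ in rescaled variables). Since $|\bar F|\le\eps$, one then has $\Delta\varphi+\bar F\le 0$, so $(m-1)\varphi(\Delta\varphi+\bar F)\le 0$ contributes with the \emph{favorable} sign, and the supersolution inequality reduces to an $m$-free balance between $\varphi_t$, $|\nabla\varphi|^2$, and lower-order terms, all of size $O(c_0+\eps)$. This is exactly the device that makes the constants universal for $m\ge 2$; replacing your quadratic profile by such a $(-\eps)$-harmonic annular barrier (and adding the Gronwall step above) would complete the argument.
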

\begin{proof}
Without loss of generality, we suppose $x_0=0$ and shift $t_0$ to $0$. Let us consider the re-scaled the pressure variable $\bar{p}(x,t):=\frac{\tau}{R^2}p(Rx,\tau t)$, which satisfies
\[
\bar{p}_t=(m-1)\bar{p}(\Delta \bar{p}+\nabla\cdot \bar{b}+\bar{f})+|\nabla \bar{p}|^2+\nabla \bar{p}\cdot \bar{b}.
\]
Here
\begin{equation}\label{rescale v}
    \bar{ b }(x,t):=\frac{\tau}{R} { b }(Rx,\tau t)\quad\text{and}\quad \bar{f}(x,t,\bar{p}):=\tau f\left(Rx,\tau t, \frac{R^2}{\tau}\bar{p}\right).
\end{equation}
We also denote 
\[
\bar{X}(t):=\frac{1}{R}X(0,0;\tau t),\quad v(x,t):=\bar{p}(x+\bar{X}(t),t)
\]
where $v$ satisfies
\beq\lb{4.3}
{v}_t - (m-1){v}(\Delta {v}+\bar{F})- |\nabla {v}|^2-\nabla{v}\cdot (\bar{ b }(x+\bar{X},t)-\bar{ b }(\bar{X},t))=0
\eeq
with $\bar{F}(x,t,v):=\nabla\cdot \bar{ b }(x+\bar{X},t)+\bar{f}(x+\bar{X},t,v)$.

\medskip

From the assumption \eqref{if} and the change of variables, it follows that
\beq\lb{4.2}
\mint_{B_1}v(x,1)\, dx=\mint_{B(\bar{X}(1),1)}\bar{p}(x,1)\, dx\leq c_0.
\eeq
Next, having in mind that $t_0\geq \eta_0$ has been shifted to $0$, we apply Proposition~\ref{T.3.1} to find that, for any $t\geq 0$,
\beq
\Delta p+\nabla\cdot{ b }+f\geq -\frac{1}{m-1}\left(C+\frac{1}{\eta_0}\right) =: -\frac{1}{m-1}C_{\eta_0}.
\label{eqn: define C_eta_0}
\eeq
%
Hence, 
\begin{equation}\label{bound00}
\Delta v+\bar{F}\geq -\frac{\tau}{m-1}C_{\eta_0}, 
\end{equation}
and thus, for some universal $C>0$,
\beq\lb{def.eps}
\Delta v \geq  -\frac{\tau}{m-1}C_{\eta_0}-\bar{F} \geq -\eps\quad\text{with}\quad\eps:=\left(\frac{1}{(m-1)\eta_0}+C\right)\tau.
\eeq
Here we took $C$ in the definition of $\eps$ to be suitably large so that $|\bar{F}|\leq C \tau \leq \eps$; we will use this fact later. 
In addition, by the assumption on $\tau$, we can make $\eps\in (0,1)$ by taking $c_0$ to be small.
Observe that $v+\eps |x|^2/(2d)$ is non-negative and subharmonic thanks to \eqref{def.eps}. By the Harnack's inequality and \eqref{4.2}, for all $x\in B_{1/2}$,
\begin{equation}\label{est v t1}
\begin{aligned}
v(x,1)&
\leq -\frac{\eps|x|^2}{2d}+Cv(0,1)\\
&\leq -\frac{\eps|x|^2}{2d}
+ C\mint_{B_1}v(y,1)+\frac{\eps|y|^2}{2d}\, dy\leq C(c_0+\eps),
\end{aligned}
\end{equation}
where $C$ is some dimensional constant.

Note that $v$ is smooth in its positive set thanks to the classic parabolic theory.  So it follows from \eqref{4.3} and \eqref{bound00} that, in the positive set of $v$,
\begin{equation*}
\begin{aligned}
    v_t(x,t) & =(m-1){v}(\Delta {v}+\bar{F})+|\nabla {v}|^2+\nabla{v}\cdot (\bar{ b }(x+\bar{X},t)-\bar{ b }(\bar{X},t))\\
    &\geq -C_{\eta_0}\tau {v}+|\nabla {v}|^2-|\nabla{v}||x|\|\nabla\bar{ b }\|_\infty.
\end{aligned}
\end{equation*}
Due to Young's inequality, $\|\nabla\bar{ b }\|_\infty\leq C\tau\leq C\eps$, in the positive set of $v$,
\begin{equation}\lb{4.4}
\begin{aligned}
    v_t(x,t) & \geq -C_{\eta_0}\tau {v}-|x|^2\|\nabla\bar{ b }\|_\infty^2
    \geq -C_{\eta_0}\tau {v}-C\eps^2|x|^2,
\end{aligned}
\end{equation}
Also, because $v$ is continuous and non-negative, the same inquality holds weakly in $\bbR^d$.
Since $\eps\in(0,1)$ and $C_{\eta_0}\tau\leq 1+C\tau\leq C$, the Gronwall's inequality implies that
\[
{v}(x,1)\geq e^{-C(1-t)}{v}(x,t)-C\eps^2|x|^2(1-t) \geq e^{-C}{v}(x,t)-C\eps^2\quad \hbox {in } B_{1/2}\times (0,1).
\]
Combining this with \eqref{est v t1} yields for all $(x,t)\in B_{1/2} \times (0,1)$ and for some $C_1\geq 1$,
\begin{equation}\label{est v 111}
\begin{aligned}
{v}(x,t)\leq e^{C}(v(x,1)+C\eps^2)
\leq C_1(c_0+\eps),
\end{aligned}
\end{equation}
i.e., $v$ is uniformly small in $B_{1/2}\times(0,1)$. 

From here, we proceed with a barrier argument to conclude the lemma. 
For $t\in (0,1)$, we denote $\sigma(t):=C_1(c_0+\eps)(1+ \frac{t}4)$, and $r(t):=\frac{1}{3}-\frac{t}{6}$; besides, define
\[
\Sigma:=\left\{(x,t)\,|\,x\in B_{\frac12}\setminus B_{r(t)},\,t\in (0,1) \right\}.
\]
Let $\varphi(x,t)$ be the solution to
\[
\left\{
\begin{aligned}
-\Delta\varphi&=\eps \quad &&\text{ in }\Sigma,\\
\varphi&=C_1(c_0+\eps)(1+ t/4)\quad &&\text{ on }\partial B_{\frac12},\\
\varphi&=0\quad &&\text{ on }\partial B_{r(t)}.
\end{aligned}
\right.
\]
We also define $\varphi=0$ for $x\in B_{r(t)}$ and $t\in (0,1)$.

We will show that $\varphi$ is a supersolution to \eqref{4.3} in $B_{1/2}\times (0,1)$.
Let us only consider the case when $d\geq 3$. From the equation of $\varphi$, it is easy to obtain that
\[
\varphi(x,t)=a_1(t)|x|^{2-d}+a_2(t)-\frac{\eps}{2d}|x|^2,
\]
where
\[
a_1(t):=\frac{ \frac{\eps r(t)^2}{2d}-\frac{\eps}{8d}-\sigma(t)}{r(t)^{2-d}-2^{d-2}}\quad\text{and}\quad a_2(t):=\sigma(t)+\frac{\eps}{8d}-a_1(t) 2^{d-2}.
\]
When $d=2$, $\varphi$ takes the form
\[
\varphi(x,t)=a_1(t)\ln|x|+a_2(t)-\eps|x|^2/4,
\]
and the rest of the argument is similar.

Let us drop the $t$-dependence from the notations of $a_1(t)$, $\sigma(t)$ and $r(t)$. Note that  $r\in(\frac16,\frac13)$ and $\sigma\geq \sigma(0)=C_1(c_0+\eps)\geq \eps$. 
By direct calculation, we get
\begin{align*}
 a_1'&=(r^{2-d}-2^{d-2})^{-1}\left(\frac{\eps rr'}{d}-\sigma'\right)+(r^{2-d}-2^{d-2})^{-2}(d-2)r^{1-d}r'\left(\frac{4\eps r^2-\eps}{8d}-\sigma\right)\\
&\geq (r^{2-d}-2^{d-2})^{-1}\left(-\frac{\eps r}{6d}-\frac\sigma4+\frac{d-2}2 \left(\frac{\eps}{16d}+\sigma\right)\right)  \geq 0.
\end{align*}
Here we used the facts that $r' = -\frac{1}{6}$, $\sigma' = \frac{1}{4}\sigma(0)\leq \frac{1}{4}\sigma$, and $4r^2 \leq \frac{4}{9} <\frac{1}{2}$.
Then we further derive that
\[
\varphi_t=\sigma'+(|x|^{2-d}-2^{d-2})a'_1 \geq \sigma'=C_1(c_0+\eps)/4\quad\text{in }\Sigma.
\]
Since $|a_1(t)|\leq C(\eps+\sigma(t))\leq C(c_0+\eps)$ for $t\in (0,1)$, there exists a universal constant $C=C(C_1)>0$, such that
\[
|\nabla\varphi|\leq C(|a_1|+\eps)\leq C(c_0+\eps)\quad\text{ in }B_{1/2}\times (0,1).
\]
Morover, by \eqref{def.eps}, for $(x,t)\in B_1\times(0,1)$,
\[
|\bar{F}|\leq \eps\quad\text{and}\quad |\bar{ b }(x+\bar{X},t)-\bar{ b }(\bar{X},t)|\leq \|\nabla{ \bar{b} }\|_\infty|x|\leq C\eps.
\]
Combining the above estimates, we find in $\Sigma$ that
\begin{align*}
&\;\varphi_t - (m-1)\varphi(\Delta \varphi+\bar{F})- |\nabla \varphi|^2-\nabla\varphi\cdot (\bar{ b }(x+\bar{X},t)-\bar{ b }(\bar{X},t))\\
\geq &\; {C_1(c_0+\eps)}/{4}+(m-1)\varphi(\eps-\bar{F})-C(c_0+\eps)^2-C(c_0+\eps)\eps\\
\geq &\; {C_1(c_0+\eps)}/{4}-C(c_0+\eps)^2,
\end{align*}
which is non-negative provided that $(c_0+\eps)\leq \frac{C_1}{4C}$.
This is achieved if we take $\tau$ as in the assumption and let $c_0$ be sufficiently small and yet universal.
Therefore, we conclude that $\varphi$ is a supersolution to \eqref{4.3} in $\Sigma$. In view of \cite[Lemma 2.6]{kimzhang21}, $\varphi$ is also a supersolution in $B_{1/2}\times (0,1)$.

By the assumption $v(x,0) = 0$ in $B_{1/2}$ and thus ${v}\leq \varphi$ on $\{|x|\leq \frac{1}{2},\,t=0\}$.
On the lateral boundary, \eqref{est v 111} and the equation of $\varphi$ yield that
\[
v\leq C_1(c_0+\eps)\leq \varphi\quad \text{ for }(x,t)\in\partial B_{1/2}\times (0,1),
\]
Hence, by the comparison principle, we have
$v\leq \varphi$ in $B_{1/2} \times (0,1)$.
In particular, 
\[\bar{p}(x+\bar{X}(1),1)={v}(x,1)\leq \varphi(x,1)=0\]
for $|x|<\frac{1}{6}$.
This completes the proof of the lemma.
\end{proof}

\begin{corollary}\lb{C.2.2}
Under the assumptions of Lemma \ref{L.4.1}, 
there exists a universal constant $c_0\in (0,1)$ such that the following holds for all $t_0\geq \eta_0$ and
$\tau\leq \min\{c_0,c_0(m-1)\eta_0,\eta_0\}$. If $p(\cdot,t_0)=0$ in $B(x_0,R)$ and $(X(x_0,t_0;\tau),t_0+\tau)\in\Gamma$, 
then
\[
\mint_{B(X(x_0,t_0;\tau),R)}p(x,t_0+\tau)\, dx\geq \frac{c_0R^2}{\tau}.
\]
\end{corollary}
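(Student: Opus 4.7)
The plan is to derive Corollary \ref{C.2.2} as the contrapositive of Lemma \ref{L.4.1}, using the continuity of $p$ together with the definition of the free boundary $\Gamma$. Since there is no new analytic work to be done, the proof will be very short; the main conceptual content is translating ``being a free boundary point'' into the geometric condition needed to negate the conclusion of Lemma \ref{L.4.1}.

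First, observe that the hypotheses on $t_0$, $\tau$, and $x_0$ match exactly those of Lemma \ref{L.4.1}, so the constant $c_0$ appearing in that lemma is the same universal constant that we will use here. Next, since $p$ is continuous and nonnegative, the positive set $\Omega_p(t_0+\tau) = \{p(\cdot,t_0+\tau)>0\}$ is open, so its topological boundary satisfies $\Gamma_p(t_0+\tau) = \overline{\Omega_p(t_0+\tau)} \setminus \Omega_p(t_0+\tau)$. The assumption $(X(x_0,t_0;\tau),\,t_0+\tau)\in\Gamma$ therefore places $X(x_0,t_0;\tau)$ in the closure of $\Omega_p(t_0+\tau)$, which implies that every neighborhood of this point contains a point where $p(\cdot,t_0+\tau)>0$. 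In particular, there exists some $y\in B(X(x_0,t_0;\tau),R/6)$ with $p(y,t_0+\tau)>0$, so the conclusion \eqref{then} of Lemma \ref{L.4.1} cannot hold.

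Now I apply the contrapositive of Lemma \ref{L.4.1}. Since the first half of hypothesis \eqref{if} is satisfied (we are given $p(\cdot,t_0)=0$ in $B(x_0,R)$), and since \eqref{then} fails, the only remaining possibility is that the second half of \eqref{if} fails, i.e.,
\[
\mint_{B(X(x_0,t_0;\tau),R)}p(x,t_0+\tau)\,dx \;>\; \frac{c_0 R^2}{\tau},
\]
which in particular yields the non-strict inequality claimed in the corollary.

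I do not expect any real obstacle: the entire content is packaged inside Lemma \ref{L.4.1} itself. The only thing one has to be mindful of is that $c_0$ in the corollary must be taken equal to (or smaller than) the universal constant furnished by Lemma \ref{L.4.1}; no further smallness conditions on $\tau$ beyond those stated are needed, and no additional use of the Aronson--B\'enilan estimate or of the rescaling from the proof of Lemma \ref{L.4.1} is required.
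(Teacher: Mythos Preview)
Your proof is correct and is exactly the intended argument: the paper states Corollary \ref{C.2.2} immediately after Lemma \ref{L.4.1} without proof precisely because it is the contrapositive, using that a free boundary point of $p(\cdot,t_0+\tau)$ has points of positivity in every neighborhood and hence violates \eqref{then}.
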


The next lemma states that if the spatial $L^1$-average of the pressure is large locally near the free boundary, then the positive set of $p$ should expand with respect to the streamlines. 
We highlight once again that, unlike \cite{CFregularity,kimzhang21}, the constants in the proof are independent of $m$.

\begin{lemma}\label{L.4.2}
Under the assumptions of Lemma \ref{L.4.1}, there exists a universal $c_0\ll 1$ such that the following holds for any $t_0\geq \eta_0$ and $\lambda>0$. If $C_1\geq 1$ and $c_2,\tau\in (0,1)$ satisfy 
\[
C_1 \min\{\lambda,\lambda^2\}\geq 1/c_0,\quad c_2\lambda\leq c_0,\quad\text{and}\quad  \tau\max\{\lambda,1\}\leq \min\{c_0,c_0(m-1)\eta_0,\eta_0\},
\]
and if
\begin{equation}\lb{4.11}
\mint_{B(x_0,R)}p(x,t_0)\, dx\geq C_1\frac{R^2}{\tau}\quad\text{for some }R>0,
\end{equation}
then
\[
p(X(x_0,t_0;\lambda \tau),t_0+\lambda\tau)\geq c_2\frac{R^2}{\tau}.
\]
\end{lemma}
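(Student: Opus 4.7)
My plan is to mirror the barrier argument of Lemma \ref{L.4.1}, this time constructing a Barenblatt-type subsolution of \eqref{4.3} rather than a supersolution. I begin with the same rescaling $\bar p(x,t) := \frac{\tau}{R^2}p(Rx, \tau t)$ with $(x_0, t_0)$ shifted to the origin, introducing $\bar X(t) := R^{-1}X(0,0;\tau t)$ so that the hypothesis becomes $\mint_{B_1} \bar p(\cdot, 0)\, dx \geq C_1$ and the target reads $\bar p(\bar X(\lambda), \lambda) \geq c_2$. The standing hypotheses on $\tau$, together with Proposition \ref{T.3.1}, give $|\bar F|,\,\|\na \bar b\|_\infty \leq \eps$ with $\eps := \tau/((m-1)\eta_0) + C\tau$ as small as we wish (see \eqref{def.eps}), so the drift and source are uniform $m$-independent perturbations of the PME.

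From the integral bound there exists $x_* \in \overline{B_1}$ with $\bar p(x_*, 0) \geq C_1$. Let $\bar Z(t)$ be the rescaled streamline from $x_*$. Lemma \ref{streamline}, combined with $\tau\lambda \leq c_0$, yields $\bar p(\bar Z(t), t) \geq e^{-C\tau t}C_1 \geq C_1/2$ for $t \in [0, \lambda]$, with constants uniform in $m$. The remaining task is to propagate this lower bound spatially from $\bar Z(\lambda)$ to the target $\bar X(\lambda)$, which differ by $|x_*|e^{\eps\lambda} \leq 1 + O(c_0)$ in rescaled coordinates.

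For this step I construct a Barenblatt-type subsolution of \eqref{4.3} centered on the streamline $\bar Z$,
\[
\varphi(x,t) := \alpha(t)\left(1 - \frac{|x - \bar Z(t)|^2}{r(t)^2}\right)_+,
\]
with $\alpha(t), r(t)$ to be determined. Because $\bar Z$ follows the flow $-\bar b$, the leading advection cancels, leaving a Lipschitz residue of order $\eps|x - \bar Z|$; combined with $|\bar F| \leq \eps$, these can be absorbed into the PME-type quadratic terms. The subsolution condition reduces to an ODE system that, to leading order, preserves the dimensionless ratio $\alpha(t)/r(t)^2$ and yields $r(\lambda)^2 \geq r(0)^2 + c_*(m-1)\alpha(0)\lambda$ together with $\alpha(\lambda)$ bounded below by a definite fraction of $C_1$. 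I initialize $\alpha(0)$ as a small fraction of $C_1$ and choose $r(0)$ small enough that $\varphi(\cdot, 0) \leq \bar p(\cdot, 0)$ on the initial support; this matching uses a quantitative neighborhood lower bound for $\bar p$ near $x_*$, derived from $\bar p(x_*, 0) \geq C_1$ and the AB bound $\Delta \bar p \geq -2\eps$ via a parabolic expansion. The conditions $C_1\min\{\lambda,\lambda^2\} \geq 1/c_0$ and $c_2\lambda \leq c_0$ are then exactly what is needed --- splitting into the small-$\lambda$ and large-$\lambda$ regimes --- to ensure both $|\bar X(\lambda) - \bar Z(\lambda)|^2 \leq r(\lambda)^2/2$ and $\alpha(\lambda)/2 \geq c_2$, whence $\varphi(\bar X(\lambda), \lambda) \geq c_2$. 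The comparison principle for \eqref{4.3}, applied as in Lemma \ref{L.4.1}, concludes the proof.

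The main obstacle is the uniform-in-$m$ analysis of $\varphi$: the factor $(m-1)$ premultiplying $\varphi\Delta\varphi$ would naively cause $\varphi$ to spread at rate $\sqrt{m-1}$ and its height to decay at rate $(m-1)$, destroying uniformity; the mechanism restoring uniformity is the $m$-independence of the autonomous flow for the scale-invariant ratio $\alpha/r^2$. A secondary technical point is upgrading the single-point estimate $\bar p(x_*, 0) \geq C_1$ to a neighborhood estimate using the AB Hessian bound, which is the prerequisite for the initial matching $\varphi(\cdot, 0) \leq \bar p(\cdot, 0)$.
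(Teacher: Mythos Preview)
Your approach is genuinely different from the paper's, and it has a real gap at the initial matching step. From the hypothesis $\mint_{B_1}\bar p(\cdot,0)\geq C_1$ you extract a point $x_*$ with $\bar p(x_*,0)\geq C_1$, and you then claim that the AB bound $\Delta\bar p\geq -\eps$ yields, ``via a parabolic expansion,'' a pointwise lower bound on $\bar p(\cdot,0)$ over a ball $B(x_*,r_0)$ of \emph{uniform} radius, so that a Barenblatt profile can be slid underneath. This step does not go through. A lower bound on the Laplacian controls only the trace of $D^2\bar p$, not the full Hessian; away from $x_*$ the eigenvalues of $D^2\bar p$ can be large and of opposite signs while keeping $\Delta\bar p\geq-\eps$, so $\bar p$ may drop steeply in some direction. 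Even at the maximum $x_*$ the Hessian is controlled only pointwise, and the Taylor expansion gives no neighborhood of uniform size. Without a uniform $r_0$, the subsequent Barenblatt spreading is useless: mass conservation for the profile forces $\alpha(\lambda)/\alpha(0)\sim (r_0/r(\lambda))^{d(m-1)}$, so if $r_0$ is not bounded below independently of $m$ you lose all control on $\alpha(\lambda)$.

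The paper sidesteps this entirely by never seeking a pointwise spatial lower bound. It works with the integral quantity $Y(t)=\int_{B_1}\xi^m$ (where $\xi$ is the rescaled density in streamline coordinates), shows $Y(0)\gtrsim C_1$ and $Y(t)\gtrsim C_1$ on $[0,\lambda]$ via the AB estimate, and then invokes the Caffarelli--Friedman Green's function identity to obtain $\int_0^t Y\leq C\int_0^t \xi(0,s)^m\,ds + CY(t)^{1/m}$. Assuming for contradiction that $\xi(0,\lambda)$ is small, one gets $Z'\geq C^{-m}Z^m$ for $Z(t)=\int_0^t Y$, and the condition $C_1\min\{\lambda,\lambda^2\}\geq 1/c_0$ forces blowup before time $\lambda$, a contradiction. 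The passage from integral to pointwise information is thus effected by the Green's function and the ODE blowup, not by any barrier.
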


\begin{proof}
As before, set $(x_0,t_0)=(0,0)$ by shifting the coordinates.
Define $C_{\eta_0}$ as in \eqref{eqn: define C_eta_0}. 
Let 
$\eps$ be defined by \eqref{def.eps}.
Then by assuming $c_0\ll 1$ and yet universal, we have
\beq\lb{4.21}
C_{\eta_0}\tau\lambda \leq 2,\quad  C_1\min\{\lambda,\lambda^2\}\gg 1,\quad c_2\lambda \ll 1,\quad\text{and}\quad \eps\lambda \ll 1.
\eeq
All the bounds here can be made independent of $m$ and $\eta_0$.

Consider the density variable $\varrho(x,t):=(\frac{m-1}{m}p(x, t))^{\frac{1}{m-1}}$ and its rescaled version
\[
\bar{\varrho}(x,t):=\left(\frac{\tau}{R^2}\right)^{\frac{1}{m-1}}\varrho(Rx,\tau t).
\]
Then ${\xi}(x,t):=\bar{\varrho}(x+\bar{X},t)$ solves
\[
\xi_t=\Delta \xi^m+\nabla\cdot\big(\xi\, (\bar{ b }(x+\bar{X},t)-\bar{ b }(\bar{X},t)) \big)+\xi \bar{f}(x,t,v),
\]
where $\bar{f}$, $\bar{ b }$ and $\bar{X}$ are from the proof of Lemma \ref{L.4.1}.


\medskip

Define $Y(t) := \int_{B_1} {\xi}(x,t)^m dx$.
Let us first show that $Y(t)$ stays sufficiently positive for $t\in [0,\lambda]$. 
Since $\bar{X}(0)=0$, the assumption \eqref{4.11} gives that
\begin{align*}
    Y(0)&=\mint_{{{B_1}}}{\xi}(x,0)^mdx=\left(\frac{\tau}{R^2}\right)^{\frac{m}{m-1}}\mint_{B_R}\varrho(x+\bar{X}(0),0)^m dx\\
    &=\left(\frac{m-1}{m}\right)^\frac{m}{m-1}\mint_{B_R} \left(\frac{\tau}{R^2}p(x,0)\right)^{\frac{m}{m-1}}dx\\
    &\geq c\left(\frac{\tau}{R^2} \mint_{B_R}p(x,0)\, dx\right)^{\frac{m}{m-1}} \geq c\,C _1^\frac{m}{m-1}\geq c\,C _1.
\end{align*}
Note that, since $m\geq 2$ and $C_1\geq 1$, $c\in (0,1)$ can be taken as a universal constant. 

By \eqref{4.4} and the fact ${v}(x,t)=\frac{m}{m-1}{\xi}^{m-1}(x,t)$, there exists $C>0$ such that for all $\eps\in (0,1)$,
\begin{equation}
    \label{est v tilde sq prime}
\begin{aligned}
    ({\xi}^m)_t&\geq -C_{\eta_0}\tau {v}^{\frac{m}{m-1}}-C\eps^2v^\frac{1}{m-1}|x|^2\\
    &\geq -C_{\eta_0}\tau {\xi}^m-C \eps^2 |x|^2{\xi}\geq -C_{\eta_0}\tau{\xi}^m-C\eps^2 \quad \hbox{ for } (x,t)\in B_1\times [0,T).
\end{aligned}
\end{equation}
Recall that $C_{\eta_0}\tau\lambda\leq 2$ by \eqref{4.21}. 
Then \eqref{est v tilde sq prime} implies that, for $t\in (0,\lambda]$,
\begin{equation}
    \label{Y lower}
    Y(t)\geq e^{-C_{\eta_0}\tau t}Y(0)-C \eps^2 t\geq e^{-2}c\,C_1 -C \eps^2 \lambda \geq c\,C _1=:c_3,
\end{equation}
where $c$'s are small universal constants.
The third inequality above can be achieved by taking $c_0$ to be suitably small and yet universal.


\medskip

Next, 
we claim that for some universal constant $C>0$,
\begin{equation}\label{est Y}
\int_0^t Y(s)\, ds \leq C\int_0^t {\xi}(0,s)^mds + CY(t)^{\frac{1}{m}}\quad\text{ for all }t\in (0,1/\tau).
\end{equation}
When $m\in [2,d]$, this follows from the proof of \cite[Lemma 2.3]{CFregularity} for PME and that of \cite[Lemma 4.3]{kimzhang21} for advection PME. It is clear that the constant $C$ is independent of $m\in [2,d]$. 
In what follows, we shall prove the claim for $m\geq d$.

Following \cite{CFregularity}, we define for $d\geq 3$ the Green's function $G$ as
\begin{equation}\label{green}
G(x):=|x|^{2-d}+\frac12(d-2)|x|^2-\frac{d}2.
\end{equation}
Then for some dimensional constant $C_d>0$,
\begin{equation}
    \label{prop G}
    \Delta G= -C_d\delta(x)+d(d-2)\chi_{B_1},\quad 
    G\geq 0,\quad \text{and}\quad G = |\nabla G|=0 \text{ on }\partial B_1.
\end{equation}
We shall only focus on the case $d\geq 3$ in the sequel.
When $d=2$, we instead define $G(x)=-\log |x|+\frac{1}{2}(|x|^2-1)$, and the rest of the argument is similar.



The equation for $\xi$ and direct computation yield that
\begin{equation}\lb{4.5}
\begin{aligned}
&\;\frac{d}{dt}\left(\int_{B_1}G(x)\xi(x,t)\, dx\right)\\
= &\;
\int_{B_1}\Delta G(x) \xi(x,t)^m dx
-\int_{B_1}\nabla G(x)\cdot (\bar{ b }(x+\bar{X},t)-\bar{ b }(\bar{X},t))\xi(x,t)\, dx\\
&\;+\int_{B_1}G(x)\bar{f}(x,t,v)\xi(x,t)\, dx\\
=:&\; A_1+A_2+A_3.
\end{aligned}
\end{equation}
For $A_1$, applying the first identity in \eqref{prop G}, we obtain
\[
A_1= -C_d\, \xi(0,t)^m+C\int_{B_1}\xi(x,t)^m\,dx,
\]
For $A_2$, since $\|\nabla\bar{ b } \|_\infty \geq C\tau$,
\begin{equation} \label{diffence Vx}
\begin{aligned}
    A_2&= \int_{B_1}(d-2)(|x|^{-d}-1)x\cdot(\bar{ b }(x+\bar{X},t)-\bar{ b }(\bar{X}))\xi(x,t)\,dx\\
    &\geq -C\tau \int_{B_1}(|x|^{-d}-1)|x|^2\xi(x,t)\,dx\geq-C \tau \int_{B_1}G(x) \xi(x,t)\,dx.
\end{aligned}
\end{equation}
Lastly, for $A_3$, since $\bar{f}/\tau $ is uniformly bounded, we have
\[
A_3\geq -C\tau \int_{B_1}G(x) \xi(x,t)\,dx.
\]
Combining them with \eqref{4.5} yields
\[
\frac{d}{dt}\left(\int_{B_1}G(x)\xi(x,t)\, dx\right)\geq -C_d\xi(0,t)^m+C\int_{B_1}\xi(x,t)^mdx-C\tau \int_{B_1}G(x)\xi(x,t)\,dx,
\]
which implies
\begin{align*}
    e^{C \tau  t}\int_{{{B_1}}}G(x){\xi}(x,t)\,dx\geq -C_d\int_0^t e^{C \tau  s}{\xi}(0,s)^mds+C \int_0^t e^{C \tau s} Y(s)\, ds.
\end{align*}
It follows that for all $t\in (0,1/\tau )$ and $m> 1$,
\beq\label{4.7}
\int_0^t Y(s)\,ds\leq  C \int_0^t {\xi}(0,s)^m ds+ C\int_{B_1}G(x){\xi}(x,t)\, dx,
\eeq
where $C>0$ is a universal constant.
Now by H\"{o}lder's inequality,
\[
\int_{B_1}G(x){\xi}(x,t)\, dx\leq \left(\int_{B_1}G(x)^\frac{m}{m-1}dx\right)^{\frac{m-1}{m}}\left(\int_{B_1}{\xi}(x,t)^mdx\right)^{\frac{1}{m}}.
\]
Since $m\geq d$, there exists a universal $C>0$ independent of $m$, such that
\[
\int_{B_1}G(x)^{\frac{m}{m-1}}dx\leq C\int_{B_1} |x|^{\frac{m(2-d)}{m-1}}dx+C\leq C.
\]
Hence, we conclude with \eqref{est Y} from \eqref{4.7}.

\medskip

Now suppose that $p(X(\lambda\tau),\lambda\tau)\leq  c_2\frac{R^2}{\tau}$ for some choice of $c_2>0$ satisfying \eqref{4.21}.
In terms of ${\xi}=\bar{\varrho}(\cdot+\bar{X},\cdot)$, we have
\[
{\xi}(0,\lambda)^m\leq C c_2^\frac{m}{m-1}\leq Cc_2,
\]
where $C$ is universal as $m\geq 2$.
Then also by \eqref{est v tilde sq prime}, we obtain for $t\in (0,\lambda]$ that
\[
{\xi}(0,t)^m\leq C\xi(0,\lambda)^m+C \eps^2 \lambda
\leq Cc_2+C \eps^2 \lambda.
\]
Combining this with \eqref{est Y} yields for all $t\in (0,\lambda]$ that
\[
\int_0^t Y(s)\, ds \leq C\left( c_2\lambda+\eps^2\lambda^2+ Y(t)^{\frac{1}{m}}\right).
\]
In view of \eqref{Y lower}, if we further assume $c_0$ to be sufficiently small, so that (also see \eqref{4.21})
\begin{equation}
\label{cond c2}
c_3^\frac1m=c\,C_1^\frac{1}{m}\geq c_2\lambda+\eps^2\lambda^2,
\end{equation}
then $CY(t)^{1/m}\geq c_2\lambda+\eps^2\lambda$.
Hence, for $t\in (0,\lambda]$, 
\[
C Y(t)^\frac{1}{m}\geq \int^t_0 Y(s)\,ds,
\]
where $C>0$ is universal.

Writing $Z(t):=\int_0^t Y(s)\, ds$, we obtain
\begin{align*}
    Z'(t)\geq C^{-m}Z^m(t)\quad\text{for }t\in[0,\lambda).
\end{align*}
Instead of using initial data $Z(0)=0$, we use $Z(\frac{\lambda}{2})$. Indeed, it follows from \eqref{Y lower} that
$Z(\frac{\lambda}{2})\geq \frac{\lambda}2 c_3$. Then by solving the differential inequality, we obtain
\begin{equation}
    \label{est Z}
    Z(t+\lambda/2)^{m-1}\geq \left((c_3\lambda/2)^{-m+1}-(m-1)C^{-m}t\right)^{-1}\quad \text{for $t\in (0,{\lambda}/{2})$.}
\end{equation}
Notice the right-hand side of \eqref{est Z} goes to $+\infty$ as
\[
t\to \frac{C}{m-1}\left(\frac{2C}{c_3\lambda}\right)^{m-1}=:C_{m,c_3\lambda}.
\]
Since $Z(t+\lambda/2)$ should be well-defined for $t\in (0,\lambda/2)$, to obtain a contradiction, it suffices to have $C_{m,c_3\lambda}\leq \frac\lambda2$.
Since $m\geq 2$, this can be achieved if $c_3\lambda^{\frac{m}{m-1}}\gg 1$.
This is equivalent to $C_1 \lambda^\frac{m}{m-1}\gg 1$ (cf.\;\eqref{Y lower}) and it is guaranteed by \eqref{4.21}. 

Finally, because of the contradiction, we conclude that $p(X(\lambda\tau),\lambda\tau)\geq  c_2\frac{R^2}{\tau}$. 
This completes the proof.
\end{proof}

\begin{remark}\lb{R.2}
In view of Remark \ref{R.1}(1), if we assume \eqref{R.1.1}, then \eqref{R.1.2} holds with the constant being uniformly for all time. Thus we can replace $C_{\eta_0}$ by a universal constant that does not depend on $\eta_0$, and the conclusion of Lemma \ref{L.4.2} holds for all $t_0\geq 0$ and with $\tau\max\{\lambda,1\}\leq c_0$ for some $c_0\ll 1$. Similarly, this is also true for Lemma \ref{L.4.1} and Corollary \ref{C.2.2}.

\end{remark}

\begin{remark}
\label{rmk: uniform c_0}
We have introduced several $c_0$'s, which are all universal constants. 
For simplicity, in the rest of the paper, we will define $c_0$ as the smallest one among those $c_0$'s from Lemma \ref{L.4.1}, Corollary \ref{C.2.2} and Lemma \ref{L.4.2}. 
We additionally assume $c_0<1$.
\end{remark}

As a corollary of the preceding two lemmas, we can prove a dichotomy of the free boundary points.


\begin{corollary}\label{C.4.3}
Given $(x_0,t_0)\in \Gamma$ with $t_0\geq \eta_0>0$,
denote
\[
\Upsilon(x_0,t_0):=\left\{(X(x_0,t_0;-s),t_0-s),\; s\in (0,t_0)\right\}.
\]
Then the following is true:
\begin{enumerate}
\item Either {\rm(a)} ${\Upsilon}(x_0,t_0)\subset \Gamma$ or {\rm(b)} ${\Upsilon}(x_0,t_0)\cap \Gamma=\varnothing$.
\item If {\rm(b)} holds,  then there exist positive constants $C_*,\gamma,\tau$ such that for all $s\in (0,\tau)$
\beq\lb{5.1}
\begin{aligned}
&\varrho(x,t_0-s)=0 \quad\text{if}\quad |x-X(x_0,t_0;-s)|\leq C_* s^\gamma ;\\
&\varrho(x,t_0+s)>0 \quad\text{if}\quad |x-X(x_0,t_0;s)|\leq C_*s^\gamma.
\end{aligned}
\eeq
\end{enumerate}
\end{corollary}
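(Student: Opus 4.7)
The plan is this: by Lemma \ref{streamline}, $p\equiv 0$ everywhere on the backward streamline $\Upsilon$, so each $s\in(0,t_0)$ falls into exactly one of
\[
S:=\{s:X(x_0,t_0;-s)\in\Gamma(t_0-s)\},\qquad T:=(0,t_0)\setminus S,
\]
where $s\in T$ means that $X(x_0,t_0;-s)$ has a spatial $p=0$ ball around it. To obtain the dichotomy in (1), I argue by contradiction that $S$ and $T$ cannot both be nonempty. First, the ordering $s_1<s_2$ with $s_1\in T$ and $s_2\in S$ is impossible: $X(x_0,t_0;-s_2)\in\overline{\Omega_p(t_0-s_2)}$, and the forward streamline from $t_0-s_2$ to $t_0-s_1$ carries this closure into $\overline{\Omega_p(t_0-s_1)}$ by Lemma \ref{streamline}, forcing $X(x_0,t_0;-s_1)\in\overline{\Omega_p(t_0-s_1)}$, contradicting $s_1\in T$. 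Consequently $S$ is downward-closed in $(0,t_0)$, so if (1) fails there is a switching point $s^*\in(0,t_0)$ with $S$-values just below $s^*$ and $T$-values just above.

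To rule out such an $s^*$, I pick nearby $s_1\in S$ and $s_2\in T$ with $s_2-s_1$ arbitrarily small and $s_1$ bounded below by $s^*/2$. Since $s_2\in T$, there is $r>0$ with $p(\cdot,t_0-s_2)\equiv 0$ on $B(y_{s_2},r)$; the streamline sends $y_{s_2}$ to $y_{s_1}:=X(x_0,t_0;-s_1)\in\Gamma(t_0-s_1)$, and Corollary \ref{C.2.2} yields
\[
\mint_{B(y_{s_1},r)} p(x,t_0-s_1)\,dx \;\geq\; \frac{c_0 r^2}{s_2-s_1}.
\]
I then apply Lemma \ref{L.4.2} at $(y_{s_1},t_0-s_1)$ with $\tau=s_2-s_1$, $C_1=c_0$, and $\lambda\asymp c_0^{-2}$; the structural constraints in that lemma reduce to smallness of $\tau$ (relative to $c_0$, $(m-1)(t_0-s_1)$, and $s_1$), which the switching-point construction supplies. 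The conclusion produces a point on the forward streamline from $y_{s_1}$ (at a time strictly before $t_0$, since $\lambda\tau\le s_1$) where $p>0$, and Lemma \ref{streamline} propagates this positivity forward to give $p(x_0,t_0)>0$, contradicting $(x_0,t_0)\in\Gamma$. Verifying these parameter conditions uniformly is the main technical obstacle; this is where the switching-point reduction and the uniform-in-$m$ character of Corollary \ref{C.2.2} and Lemma \ref{L.4.2} are essential.

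For (2) under (b), fix a small $s^*\in(0,t_0)$ and $r^*>0$ with $p(\cdot,t_0-s^*)\equiv 0$ on $B(y^*,r^*)$, where $y^*=X(x_0,t_0;-s^*)$. The forward inequality in \eqref{5.1} combines Corollary \ref{C.2.2} (which gives $\mint_{B(x_0,r^*)}p(\cdot,t_0)\,dx\geq c_0(r^*)^2/s^*$) with Lemma \ref{L.4.2} applied at $(x_0,t_0)$, producing a pointwise positive lower bound on $p$ in a streamline-transported ball of radius $\asymp s^\gamma$ around $X(x_0,t_0;s)$ for small $s>0$. The backward inequality comes from iterating Lemma \ref{L.4.1} along the forward streamline starting from $(y^*,t_0-s^*)$: each application advances a zero ball by a time $\tau_k\sim R_k^2$ (the average condition \eqref{if} being verified using the a priori bound on $p$) while shrinking its radius by a factor $\tfrac{1}{6}$, and summing the increments geometrically yields a zero ball of radius $\asymp s^\gamma$ around $X(x_0,t_0;-s)$ at time $t_0-s$ for all small $s$. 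The exponent $\gamma>4$ arises from balancing the geometric radius decay against the quadratic time-step constraint of Lemma \ref{L.4.1}, and choosing constants consistently across the iterations so that this balance matches the smallness conditions of Lemma \ref{L.4.2} is the subtle part of the calculation.
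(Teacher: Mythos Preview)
Your argument for part (1) is essentially the standard Caffarelli--Friedman dichotomy argument that the paper defers to \cite{CFregularity,kimzhang21}, and it is sound in outline. One small repair: Lemma~\ref{L.4.2} requires $C_1\geq 1$, so you cannot literally take $C_1=c_0$. The fix is harmless --- rewrite the average bound from Corollary~\ref{C.2.2} as $\mint_{B(y_{s_1},r)}p\geq C_1 r^2/\tau'$ with $\tau'=(C_1/c_0)(s_2-s_1)$ for any fixed $C_1\geq 1$, and then apply Lemma~\ref{L.4.2} with this $\tau'$ and $\lambda$ chosen accordingly. Your smallness conditions on $s_2-s_1$ then go through unchanged.

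The real gap is in your backward half of \eqref{5.1}. Iterating Lemma~\ref{L.4.1} as you describe --- advancing a zero-ball of radius $R_k$ by $\tau_k\sim R_k^2$ and shrinking to $R_{k+1}=R_k/6$ --- yields a total time increment $\sum_k\tau_k\lesssim R_0^2=(r^*)^2$, and there is no a priori reason why $(r^*)^2$ dominates $s^*$. If $(r^*)^2\ll s^*$, your iteration stalls at a positive limiting backward time and never produces a zero-ball at $t_0-s$ for $s$ near $0$. Moreover, even in the favorable case the balance $R_k^2\sim s_k$ gives $R\sim s^{1/2}$, not $s^\gamma$ with $\gamma>4$; your explanation of where $\gamma>4$ comes from does not match what this iteration actually yields.

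The paper's route (Lemma~\ref{L.4.3}, following \cite{CFregularity}) avoids this by a contradiction argument rather than direct forward propagation of zero-balls: assuming $X(x_0,t_0;-\lambda\tau)$ is within $\alpha R$ of $\Gamma(t_0-\lambda\tau)$ for some $\lambda$ close to $1$, one combines the original zero-ball at $t_0-\tau$ with Corollary~\ref{C.2.2} at the nearby free-boundary point and then Lemma~\ref{L.4.2} to force $p(x_0,t_0)>0$, a contradiction. Iterating this contradiction over the geometric sequence of times $t_0-\lambda^n\tau$ is what produces the exponent $\gamma=\log_\lambda\alpha>4$, with $\alpha$ and $\lambda$ tied together through the constants of Lemma~\ref{L.4.2}. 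You should replace your Lemma~\ref{L.4.1} iteration by this mechanism.
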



Based on our Lemmas \ref{L.4.1} and \ref{L.4.2}, the proof of Corollary \ref{C.4.3} is parallel to that of Theorems 3.1--3.2 in \cite{CFregularity}. 
A sketch of the proof for part (1) can be found in \cite{kimzhang21}.
However, for our purpose, it is crucial to further characterize the dependence of the constants $C_*,\gamma,\tau$ above, as we need them to be independent of $m$ and the choice of the free boundary points. 
This will be addressed in the next section.

\section{Uniform Estimates for Strict Expansion}
\label{sec: strict expansion}
In this section, we want to show that, if the the support of the solution strictly expands with respect to streamlines at the initial time and uniformly for all $m\geq 2$, then such property still holds for all times.
To be more precise, we make the assumption that
\begin{enumerate}
    \item[{\bf (S)}] \label{assumption: strict expansion at initial time} There exists $\tau_0>0$ such that for all $m\geq 2$ and for all $\tau\in(0,\tau_0]$, we can find $r_\tau>0$ satisfying
\[
\Omega_{p_m}(\tau)\text{ contains the $r_\tau$-neighborhood of }\{X(x,0;\tau)\,|\,x\in\Omega_{p_m}(0)\}.
\]  
{Let us assume that $r_\tau$ is continuous in $\tau$.}
\end{enumerate}
In what follows, we first discuss some general conditions that guarantee {\bf (S)}, and then we show that such strict expansion property propagates to later times.

\subsection{Strict expansion at the initial time}
\label{sec: strict expansion at t=0}
It has been known for a long time that, under the assumption \eqref{introgr}, the positive set of solutions to the PME strictly expands at the initial time; see for example \cite{aronson1983initially,CFregularity}. 
In a similar spirit, we shall prove in the following lemma that this holds for the PME with source and drift terms as well, where the strict expansion should be understood as that with respect to the streamlines. 
The proof is postponed to Appendix \ref{sec: proof of strict expansion lemma 1}.

\begin{lemma}\lb{L.5.12}
Suppose that $\Omega_{p_m}(0)$ is a bounded domain with Lipschitz boundary and \eqref{introgr} holds. 
Then there exists $\delta_m>0$ such that for any $\tau\in (0,\delta_m]$ there exists $r_{\tau,m}>0$ satisfying
\[
\Omega_{p_m}(\tau)\text{ contains the $r_{\tau,m}$-neighborhood of }\{X(x,0;\tau)\,|\,x\in\Omega_{p_m}(0)\}.
\]
\end{lemma}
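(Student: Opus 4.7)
The plan is to construct, for each free boundary point $x_0\in\partial\Omega_{p_m}(0)$, an explicit Barenblatt-type subsolution in a Lagrangian frame along the streamline through $x_0$, and then extract $\delta_m$ and $r_{\tau,m}$ by compactness of $\partial\Omega_{p_m}(0)$.

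First I would fix $x_0\in\partial\Omega_{p_m}(0)$ and pass to the coordinate $y=x-X(x_0,0;t)+x_0$. Setting $\tilde p(y,t):=p_m(y+X(x_0,0;t)-x_0,t)$, equation \eqref{1.1} becomes
\[
\partial_t\tilde p=(m-1)\tilde p\bigl(\Delta_y\tilde p+\tilde g(y,t,\tilde p)\bigr)+|\nabla_y\tilde p|^2+\nabla_y\tilde p\cdot\tilde b(y,t),
\]
where $\tilde b(y,t):=b(y+X(x_0,0;t)-x_0,t)-b(X(x_0,0;t),t)$ satisfies $|\tilde b(y,t)|\leq\|\nabla b\|_\infty|y-x_0|$ and $\tilde g$ is uniformly bounded in terms of $\|\nabla\cdot b\|_\infty+\|f\|_\infty$. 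The Lipschitz regularity of $\partial\Omega_{p_m}(0)$ yields an interior cone $K\subset\Omega_{p_m}(0)$ with vertex at $x_0$, whose aperture and height are uniform in $x_0$. Fixing an interior reference point $z\in K$ at distance comparable to the cone height from $x_0$, \eqref{introgr} gives $p_m^0(y)\geq c(\rho_0-|y-z|)_+^{2-\varsigma_0}$ for some $c,\rho_0>0$ uniform in $x_0$.

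Next I would seek a subsolution in the moving frame of the Barenblatt-type form
\[
\underline p(y,t)=e^{-\Lambda t}(t+a)^{-\alpha}\Bigl(A-B\frac{|y-z|^2}{(t+a)^{\beta}}\Bigr)_+,
\]
with $\alpha,\beta,A,B,a,\Lambda>0$ (possibly $m$-dependent) tuned so that: first, at $t=0$ the support of $\underline p$ lies in $K$ and $\underline p(\cdot,0)\leq p_m^0$ on that support, which uses the quadratic-versus-sub-quadratic comparison against the initial lower bound from \eqref{introgr}; second, $\underline p$ is a classical subsolution of the equation for $\tilde p$ on its positive set, with the drift error $\nabla\underline p\cdot\tilde b=O(|y-x_0|)|\nabla\underline p|$ and the source-type term $(m-1)\underline p\,\tilde g$ absorbed by the PME-dissipation $(m-1)\underline p\Delta\underline p$ together with the exponential decay $e^{-\Lambda t}$; and third, for $\tau\in(0,\delta_m]$ the support $\{|y-z|\leq\sqrt{A/B}\,(\tau+a)^{\beta/2}\}$ of $\underline p(\cdot,\tau)$ strictly contains $B(x_0,r_{\tau,m})$ for some $r_{\tau,m}>0$ continuous in $\tau$. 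The comparison principle for PME-type sub/super-solutions used throughout the paper then gives $\tilde p\geq\underline p$, so after undoing the coordinate change, $p_m(\cdot,\tau)>0$ on $B(X(x_0,0;\tau),r_{\tau,m})$. For interior points $x\in\Omega_{p_m}(0)$ away from the boundary, Lemma \ref{streamline} directly gives $p_m(X(x,0;\tau),\tau)>0$. A finite covering of the compact set $\partial\Omega_{p_m}(0)$ by such neighborhoods, together with continuity of the construction in $x_0$, produces a single $r_{\tau,m}>0$ uniform in $x_0$.

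The main obstacle lies in the subsolution construction. The drift-induced error is linear in $|y-x_0|$ on the support, while the diffusive gain $(m-1)\underline p\Delta\underline p$ is only of order $(m-1)\underline p\,(t+a)^{-\beta}$, so absorbing it forces a delicate relation between $\beta$ and the initial support radius $\sqrt{A/B}\,a^{\beta/2}$; this radius must also respect the sub-quadratic lower bound $(\rho_0-|y-z|)_+^{2-\varsigma_0}$ from \eqref{introgr}. The strict inequality $\varsigma_0>0$ is essential here, providing the algebraic room to dominate the quadratic profile of $\underline p$ by $p_m^0$ near the boundary of the initial support, as in Aronson's classical waiting-time argument for the pure PME, and ensuring that the support radius strictly exceeds $|z-x_0|$ after an arbitrarily short positive time.
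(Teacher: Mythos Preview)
Your overall strategy---a quadratic Barenblatt-type subsolution centered at an interior point, a Lagrangian frame, comparison, then compactness over $\partial\Omega_{p_m}(0)$---is the right one and matches the paper. The gap is in placing the reference point $z$ at a \emph{fixed} distance (the uniform cone height) from $x_0$. The barrier's initial support then lies strictly inside $B(z,|z-x_0|)$, and you assert its radius will exceed $|z-x_0|$ ``after an arbitrarily short positive time''; but the boundary speed of a quadratic profile $A(r^2-|\cdot-z|^2)_+$ is $2Ar$, and the initial ordering $\underline p(\cdot,0)\leq p_m^0$ on a fixed-size ball forces $A$ to be bounded (by roughly $\|p_m^0\|_\infty/\rho_0^2$). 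Hence the time needed to cross the fixed gap $|z-x_0|-\rho_0>0$ is bounded below by a positive constant, and the construction yields nothing for smaller $\tau$. The hypothesis $\varsigma_0>0$ does not, by itself, make a barrier of fixed initial size expand arbitrarily fast.

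The paper's fix is to let the interior point depend on a small parameter: take $z_0$ with $d(z_0,\Gamma(0))=\eps$ and $|z_0-x_0|\leq C_m\eps$ (this is where Lipschitz is used). On the small ball $B(z_0,r_0)$ with $r_0\approx\eps$, \eqref{introgr} produces a quadratic lower barrier with coefficient $A_0\sim\gamma_0\,\eps^{-2\varsigma_0/(4-\varsigma_0)}\to\infty$, and this divergence is precisely where $\varsigma_0>0$ enters. The explicit subsolution $\phi=A(t)\big(r(t)^2-|x|^2\big)_+$ with $A(t)=A_0/(LA_0t+1)$, $r(t)=r_0(LA_0t+1)^{1/L}$, $L=m(2d+\|F\|_\infty)$, placed in the frame along the streamline through $z_0$ (not $x_0$, so the drift error stays of size $\|\nabla b\|_\infty r(t)$ rather than picking up an extra $\|\nabla b\|_\infty|z-x_0|$), then has its support grow by the required factor $\sim C_m$ within time $t_\eps=\eps^{\varsigma_0/2}\to 0$. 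Given any small $\tau$ one chooses $\eps$ with $t_\eps\leq\tau$ and uses Lemma~\ref{streamline} to carry the inclusion forward to time $\tau$.
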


However, one cannot hope for such strict expansion to be uniform in $m$. 
Indeed, the limiting Hele-Shaw flow is known to exhibit the waiting time phenomenon \cite{book,choi2006waiting,kim2022regularity}:
%
%
if $\Omega(0)$ is locally like a cone of small angle at a boundary point, then for the limiting problem, the streamline starting at the vertex of the cone lies on the free boundary for a short time. 
In other words, in the limiting problem, $\Omega(0)$ may not strictly expand relative to the streamlines at some free boundary points.

In view of this, we need some extra assumptions to guarantee {\bf (S)}. 
Let us discuss two results in this direction.
The first one is to assume \eqref{R.1.1}. 
We remind that with \eqref{R.1.1}, Lemma \ref{L.4.2} is valid for all $t\geq 0$ instead of for $t\geq \eta_0>0$; see Remark \ref{R.2}.

\begin{lemma}\lb{L.5.11}
Suppose that $\Omega_{p_m}(0)$ is a bounded domain with Lipschitz boundary, and \eqref{cond}, \eqref{introgr} and \eqref{R.1.1} hold. Then {\bf(S)} holds 
and $r_\tau$ can be selected as $\frac{1}{2}\tau^{2/\varsigma_0}$ with $\varsigma_0$ from \eqref{introgr}.
\end{lemma}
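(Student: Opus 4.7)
The plan is to exploit the improved Aronson--B\'enilan estimate available under \eqref{R.1.1} (Remark \ref{R.1}(1)), which removes the $1/t$ term so that Lemma \ref{L.4.2} applies from $t_0 = 0$ with universal constants (Remark \ref{R.2}). The core task is to show that for $\tau > 0$ sufficiently small, any point $z$ within distance $\frac{1}{2}\tau^{2/\varsigma_0}$ of $\Phi_\tau(\Omega_{p_m}(0))$ satisfies $p_m(z, \tau) > 0$, where I write $\Phi_\tau(x) := X(x, 0; \tau)$.

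Since $\|\nabla b\|_\infty$ is bounded, $\Phi_\tau$ and $\Phi_\tau^{-1}$ are bi-Lipschitz with constants close to $1$ for $\tau$ small, so $x_0' := \Phi_\tau^{-1}(z)$ lies in an $L r_\tau$-neighborhood of $\Omega_{p_m}(0)$ for some universal $L$. If $x_0' \in \Omega_{p_m}(0)$, streamline monotonicity (the proof of Lemma \ref{streamline} extends to $t_0 = 0$ under Remark \ref{R.1}(1)) immediately yields $z = \Phi_\tau(x_0') \in \Omega_{p_m}(\tau)$.

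Otherwise, let $\tilde x_0 \in \Gamma_{p_m}(0)$ be the nearest boundary point to $x_0'$, so $|x_0' - \tilde x_0| \leq L r_\tau$. The Lipschitz hypothesis supplies an interior cone at $\tilde x_0$ of uniform-in-$m$ half-angle $\theta$ and height $h$, and \eqref{introgr} gives $p_m^0(y) \geq \gamma_0 (\sin\theta)^{2-\varsigma_0}|y - \tilde x_0|^{2-\varsigma_0}$ on this cone. Setting $R := c'\tau^{1/\varsigma_0}$ for a small universal $c'$, the inclusion $B(\tilde x_0, R/2) \subset B(x_0', R)$ (valid once $L r_\tau \leq R/2$, which holds for small $\tau$ since $2/\varsigma_0 > 1/\varsigma_0$) and a direct integration in spherical coordinates over the cone yield
\[
\mint_{B(x_0', R)} p_m^0(y)\, dy \geq c\, R^{2-\varsigma_0}
\]
with $c > 0$ universal.

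I would then apply Lemma \ref{L.4.2} at $(x_0', 0)$ with $\lambda = 1$ and $C_1 := 1/c_0$: choosing $c'$ small enough that $(c')^{\varsigma_0} \leq c/C_1$ makes $cR^{2-\varsigma_0} \geq C_1 R^2/\tau$, and Remark \ref{R.2} shows the remaining hypothesis $\tau \leq c_0$ suffices under \eqref{R.1.1}. The conclusion is $p_m(z,\tau) = p_m(\Phi_\tau(x_0'),\tau) \geq c_2 R^2/\tau > 0$, and taking $\tau_0$ to be the minimum of $c_0$ and the thresholds required above verifies \textbf{(S)} with $r_\tau = \frac{1}{2}\tau^{2/\varsigma_0}$. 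The main obstacle is the uniformity in $m$: the cone angle, the Lipschitz constants of $\partial \Omega_{p_m}(0)$, and the growth constants $\gamma_0,\varsigma_0$ must all be taken uniform in $m$, which is implicit in the statement since otherwise $r_\tau$ could not be chosen independent of $m$.
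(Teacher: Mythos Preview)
Your proposal is correct and follows essentially the same approach as the paper: both arguments use the Lipschitz boundary together with \eqref{introgr} to verify the average lower bound \eqref{4.11}, then invoke Lemma~\ref{L.4.2} at $t_0=0$ via Remark~\ref{R.2} (which is precisely where \eqref{R.1.1} enters). The only organizational difference is that the paper first establishes $\Phi_\tau(\Omega(0)+B_{\tilde r_\tau})\subset\Omega(\tau)$ and then proves the inclusion $\Phi_\tau(\Omega(0))+B_{\tilde r_\tau/2}\subset \Phi_\tau(\Omega(0)+B_{\tilde r_\tau})$ by an explicit iterative argument, whereas you bypass this by pulling back through $\Phi_\tau^{-1}$ and applying Lemma~\ref{L.4.2} directly at the pulled-back point; these are equivalent uses of the bi-Lipschitz property of the flow for small $\tau$.

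One minor inaccuracy worth flagging: your pointwise lower bound $p_m^0(y)\geq \gamma_0(\sin\theta)^{2-\varsigma_0}|y-\tilde x_0|^{2-\varsigma_0}$ fails for $y$ near the lateral boundary of the cone (where $d(y,\Omega(0)^c)$ degenerates), but the integrated estimate $\mint_{B(x_0',R)}p_m^0\geq cR^{2-\varsigma_0}$ is still correct, either by restricting to a half-angle sub-cone or by integrating $\rho^{2-\varsigma_0}\sin(\theta-\phi)^{2-\varsigma_0}$ directly in spherical coordinates. Your closing remark about implicit uniformity in $m$ of the Lipschitz constants is accurate and applies equally to the paper's own proof.
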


\begin{proof}
For brevity, let us drop $p_m$ from the subscripts of $\Omega_{p_m}$ and $\Gamma_{p_m}$.
Let $x_0\in \Omega(0)^c$ be close to $\Gamma(0)$ with $R:=2d(x_0,\Omega(0))$. We are going to apply Lemma \ref{L.4.2} with the $x_0$ and $R$, and $t_0=0$, $\lambda=1$ and $\tau\in [ R^{\varsigma_0/2},c)$ for some universal $c>0$. Indeed, due to \eqref{introgr} and that $\Omega(0)$ has a Lipschitz boundary, the condition \eqref{4.11} holds as long as $R$ is sufficiently small. Then Lemma \ref{L.4.2} and Remark \ref{R.2} yield that 
\[
p_m(X(x_0,0;\tau),\tau)>0.
\]
Thus we obtain for all $\tau>0$ being sufficiently small but uniform in $m$, and $\tilde{r}_\tau:= R=\tau^{2/\varsigma_0}$, then
\beq\lb{4.8}
\left\{X(x,0;\tau)\,|\, x=x_1+x_2,\, x_1\in B_{\tilde{r}_\tau}\text{ and }x_2\in \Omega(0)\right\}\subseteq \Omega(\tau).
\eeq

Next we show that
\beq
\begin{split}
\{y = y_1+X(x_2,0;\tau) &|\, y_1\in B_{\tilde{r}_\tau/2}\text{ and }x_2\in \Omega(0)\}\\
&\subseteq 
\{X(x,0;\tau)\,|\, x=x_1+x_2,\, x_1\in B_{\tilde{r}_\tau}\text{ and }x_2\in \Omega(0) \}.
\end{split}
\label{eqn: nbhd of the flow is contained in the flow of a bigger nbhd}
\eeq
Once this is done, we can combine it with \eqref{4.8} to obtain {\bf(S)} with $r_\tau = \tilde{r}_\tau/2 = \frac12 \tau^{2/\varsigma_0}$, where $\tau$ needs to be sufficiently small but uniform in $m$.

For any $x_1,x_2\in \bbR^d$ such that $d(x_j,\Omega(0))\leq \tilde{r}_\tau$ ($j =1,2$),
\[
\frac{d}{dt}|X(x_1,0;t)-X(x_2,0;t)|\leq \|\nabla  b\|_\infty |X(x_1,0;t)-X(x_2,0;t)|,
\]
so we have $|X(x_1,0;\tau)-X(x_2,0;\tau)|\leq C|x_1-x_2|$ when $\tau$ is smaller than a universal constant. 
Hence,
\[
\frac{d}{dt}|X(x_1,0;t)-X(x_2,0;t)-(x_1-x_2))|\leq \|\nabla b \|_\infty |X(x_1,0;t)-X(x_2,0;t)|\leq C|x_1-x_2|.
\]
Combining this with $|X(x_1,0;0)-X(x_2,0;0)-(x_1-x_2)|=0$ yields that, 
when $\tau$ is smaller than a universal constant, 
\beq
|X(x_1,0;\tau)-X(x_2,0;\tau)-(x_1-x_2)|\leq \frac13|x_1-x_2|.
\label{eqn: close to an identity map}
\eeq

Now take arbitrary $x_2\in \Omega(0)$ and $y_1\in B_{\tilde{r}_\tau/2}$, we want to show that there exists $x_1\in B_{\tilde{r}}$ such that 
$X(x_1+x_2,0;\tau) = y_1+X(x_2,0;\tau)$, which will directly imply \eqref{eqn: nbhd of the flow is contained in the flow of a bigger nbhd}.
Let 
\[
x_{1,1} := y_1,\quad 
y_{1,1}: = y_1-\big(X(x_{1,1}+x_2,0;\tau)-X(x_2,0;\tau)\big).
\]
By \eqref{eqn: close to an identity map}, $|y_{1,1}|\leq \frac13 |y_1|$.
Then for $k\geq 2$, we inductively define
\[
x_{1,k} = x_{1,k-1}+y_{1,k-1},\quad 
y_{1,k} = y_{1,k-1}-\big(X(x_{1,k}+x_2,0;\tau)-X(x_{1,k-1}+x_2,0;\tau)\big).
\]
Again by \eqref{eqn: close to an identity map}, 
$|y_{1,k}|\leq \frac13|y_{1,k-1}|$.
We thus obtain $\{x_{1,k}\}_{k=1}^\infty$ as a Cauchy sequence, satisfying that $|x_{1,k}|\leq \frac32|y_1|$ for all $k\in \mathbb{Z}_+$.
Assume that it converges to $x_1\in B_{\tilde{r}_\tau}$.
Then by the continuity of the map $X(\cdot,0;\tau)$ and the definition of $y_{1,k}$, we find that 
$0 = y_1-(X(x_1+x_2,0;\tau)-X(x_2,0;\tau))$, which proves the desired claim.
\end{proof}

We provide another strict expansion result that is uniform in $m$. Instead of \eqref{R.1.1}, we make another two assumptions: the uniform interior ball condition on $\{\Omega_{p_m}(0)\}_m$ and the smallness assumption on $\|\nabla b\|_\infty$.


\begin{lemma}\lb{L.4.4}
Assume \eqref{cond} and \eqref{introgr}. 
Suppose that $\{\Omega_{p_m}(0)\}_m$ satisfies the uniform interior ball condition with some constant $r>0$, i.e., for any $m>1$ and any $x\in\Gamma_{p_m}(0)$, there exists an open ball $B$ of radius $r$ such that $B\subset \Omega_{p_m}(0)$ and $x\in\overline{B}$.
Furthermore, assume
\[
\sigma> 2d\sup_{x\in\bbR^d}|\nabla b(x,t)|\quad\text{for all $t>0$ sufficiently small},
\]
where $\sigma$ is from \eqref{cond}. Then {\rm\bf (S)} holds for all $m>1$. 
\end{lemma}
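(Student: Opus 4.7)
The plan is to construct a family of radial sub-solutions of \eqref{1.1} whose supports expand exponentially along the streamlines of $-b$, uniformly in $m \geq 2$, and compare them with $p_m$ to obtain the strict expansion of $\Omega_{p_m}(\cdot)$ claimed by {\bf(S)}. The crucial algebraic room is that the open interval $(\|\nabla b\|_\infty,\,\sigma/(2d))$ is non-empty precisely because of the hypothesis $\sigma > 2d\|\nabla b\|_\infty$. Fix a constant $c$ in this interval and set $\beta_0:=2c-\|\nabla b\|_\infty$, so that $\beta_0 > \|\nabla b\|_\infty > 0$.

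For any $x_0\in\Gamma_{p_m}(0)$ the interior ball condition produces $y_0$ with $B_r(y_0)\subset \Omega_{p_m}(0)$ and $x_0\in\partial B_r(y_0)$. For each interior point $\tilde y\in B_r(y_0)$ consider the ansatz
\[
\underline p_{\tilde y}(x,t):=c\bigl(R(t)^2-|x-X(\tilde y,0;t)|^2\bigr)_+,\qquad R(t):=R_0\,e^{\beta_0 t},
\]
with $R_0>0$ to be chosen. In the co-moving frame $z=x-X(\tilde y,0;t)$, the identity $\partial_t X(\tilde y,0;t)=-b(X,t)$ combined with the transport term $\nabla\underline p_{\tilde y}\cdot b(x,t)=-2cz\cdot b(x,t)$ of \eqref{1.1} yields a cross-term $2cz\cdot(b(x,t)-b(X,t))$ controlled by $2c\|\nabla b\|_\infty |z|^2$. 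Together with $\Delta\underline p_{\tilde y}=-2dc$, $|\nabla\underline p_{\tilde y}|^2=4c^2|z|^2$, and the lower bound $\nabla\cdot b+f\geq \sigma+\underline p\,\partial_p f$ from \eqref{cond} (close to $\sigma$ when $R_0$ is small), the sub-solution inequality in the positive set reduces to
\[
2RR'\leq (m-1)(R^2-|z|^2)(\sigma-2dc)+(4c-2\|\nabla b\|_\infty)|z|^2,\qquad |z|\leq R,
\]
which is satisfied by $R'=\beta_0 R$ since both bracketed factors $\sigma-2dc$ and $2c-\|\nabla b\|_\infty$ are positive by the choice of $c$. The growth condition \eqref{introgr} combined with $B_r(y_0)\subset \Omega_{p_m}(0)$ gives $p_m^0(x)\geq \gamma_0(r-|x-y_0|)^{2-\varsigma_0}$ on $B_r(y_0)$; a universal choice $R_0=R_0(r,\gamma_0,\varsigma_0,c)$ then makes $\underline p_{\tilde y}(\cdot,0)\leq p_m^0$ pointwise, and the comparison principle from Section \ref{sec: prelim results on PME model} delivers $B(X(\tilde y,0;\tau),R_0 e^{\beta_0\tau})\subset \Omega_{p_m}(\tau)$.

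To upgrade to the uniform strict expansion demanded by {\bf(S)}, sweep $\tilde y$ along the inward radial segment $\tilde y_u:=x_0-(u/r)(x_0-y_0)$ with $u\in(0,r)$, so that $|\tilde y_u-x_0|=u$ and $d(\tilde y_u,\Gamma_{p_m}(0))\geq u$. The Lipschitz estimate $|X(\tilde y_u,0;\tau)-X(x_0,0;\tau)|\leq u\,e^{\|\nabla b\|_\infty \tau}$ shows that the barrier at $\tilde y_u$ covers $X(x_0,0;\tau)$ with a buffer of at least $R_0(u)e^{\beta_0\tau}-u\,e^{\|\nabla b\|_\infty\tau}$; because $\beta_0>\|\nabla b\|_\infty$, an optimization over $u$ (together with the transport monotonicity from Lemma \ref{streamline}) produces a continuous, strictly positive $r_\tau$ independent of $m$ and $x_0$. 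The \emph{main obstacle} is the pointwise initial comparison $\underline p_{\tilde y_u}(\cdot,0)\leq p_m^0$ on the cap of the barrier facing $x_0$: the quadratic profile vanishes linearly at its own boundary whereas $p_m^0$ may vanish like $d^{2-\varsigma_0}$ with $\varsigma_0\in(0,1)$ (i.e., slower than linear). This forces one to strictly shrink $R_0(u)$ below $u$ and creates a short waiting time $\tau_\star>0$ before the barrier first overtakes $X(x_0,0;\tau)$; the range $\tau\in(0,\tau_\star]$ must then be handled by a secondary argument using $\tilde y_{u(\tau)}$ with $u(\tau)\to 0$ as $\tau\to 0^+$, securing $r_\tau>0$ for every $\tau\in(0,\tau_0]$.
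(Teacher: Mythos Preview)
Your barrier ansatz and the overall strategy mirror the paper's, but there is a genuine gap in the verification of the sub-solution inequality. You write that
\[
2RR'\leq (m-1)(R^2-|z|^2)(\sigma-2dc)+(4c-2\|\nabla b\|_\infty)|z|^2
\]
``is satisfied by $R'=\beta_0 R$ since both bracketed factors $\sigma-2dc$ and $2c-\|\nabla b\|_\infty$ are positive''. Positivity alone is not enough: at $|z|=0$ the inequality reads $2\beta_0\leq (m-1)(\sigma-2dc)$, which you never check. Since your overtaking step requires $\beta_0>\|\nabla b\|_\infty$, this forces (say for $m=2$) $\sigma-2dc>2\|\nabla b\|_\infty$; combined with $c>\|\nabla b\|_\infty$ you would need $\sigma>(2d+2)\|\nabla b\|_\infty$, strictly stronger than the hypothesis $\sigma>2d\|\nabla b\|_\infty$. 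Worse, as $m\downarrow 1$ the constraint at $|z|=0$ forces $\beta_0\to 0$, so a constant-coefficient barrier cannot give an expansion rate that is uniform for all $m>1$ as the lemma asserts.

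The paper's remedy is to let the coefficient decay in time: take $\phi(x,t)=\gamma(t)\bigl(r(t)^2-|z|^2\bigr)_+$ with $\gamma(t)=e^{-2\alpha t}\gamma_0$ and $r(t)=e^{\alpha t}r_0$, so that $\gamma(t)r(t)^2\equiv\gamma_0 r_0^2$ is constant and hence $\partial_t\phi|_{z=0}=0$. The sub-solution inequality at the center then becomes $0\leq (m-1)\gamma r^2(\sigma-2d\gamma-o(1))$, trivially true for every $m>1$ once $\gamma_0<\sigma/(2d)$, and the $(m-1)$-term can simply be dropped. The only surviving constraint is at $|z|=r$, namely $\alpha\leq 2\gamma-\|\nabla b\|_\infty$, which is compatible with $\alpha>\|\nabla b\|_\infty$ exactly when $\gamma_0\in(\|\nabla b\|_\infty,\sigma/(2d))$; the non-emptiness of this interval is precisely the assumption $\sigma>2d\|\nabla b\|_\infty$. (The paper also uses \eqref{introgr} to first place $\gamma_0$ in this interval by slightly reducing $\varsigma_0$.) In short, the decaying coefficient decouples the center constraint from $m$, which is the missing idea in your constant-$c$ ansatz.
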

We postpone its proof to Appendix \ref{sec: proof of L.4.4}.

\medskip

At the end of the subsection, we show that the free boundary cannot expand too fast for any time. 
The proof is similar to the last part of the proof of Lemma \ref{L.4.1}, and the Aronson-B\'enilan estimate will not be applied.

\begin{proposition}\lb{P.4.6}
There exists $C>0$ independent of $m>1$ such that for any $\delta\in(0,1)$ and $t\in [0,T-\delta)$,
\[
\Omega_{p_m}(t+\delta)\text{ is contained in the $C\delta^\frac{1}{2}$-neighborhood of $\{X(x,t;\delta)\,|\,x\in \Omega_{p_m}(t)\}$}.
\]
\end{proposition}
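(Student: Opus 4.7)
The plan is to bound $D := d\bigl(y,\{X(x,t;\delta):x\in\Omega_{p_m}(t)\}\bigr)$ by a universal multiple of $\sqrt{\delta}$ for every $y\in\Omega_{p_m}(t+\delta)$. First I would trace $y$ back along the streamline of $-b$ to $y_0 := X(y,t+\delta;-\delta)$ and let $\tilde y(s) := X(y_0,t;s)$, so $\tilde y(\delta)=y$. Gronwall's inequality for \eqref{ode} combined with $\|\nabla b\|_\infty \leq \|b\|_{C_{x,t}^{2,1}}<\infty$ gives, for every $x\in\Omega_{p_m}(t)$,
\[
|x-y_0|\;\geq\; e^{-\|\nabla b\|_\infty\delta}\,|X(x,t;\delta)-y|\;\geq\; c_\star D,\qquad c_\star:=e^{-\|b\|_{C_{x,t}^{2,1}}T},
\]
so $p_m(\cdot,t)$ vanishes identically on $B(y_0,c_\star D)$. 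It then suffices to exhibit a universal $A>0$ such that $p_m(y,t+\delta)=0$ whenever $(c_\star D)^2>A\delta$.

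The heart of the argument, mirroring the closing barrier construction in the proof of Lemma \ref{L.4.1}, is to build in the moving frame $w:=x-\tilde y(s)$ an explicit radial supersolution $\varphi$ of \eqref{1.1} whose null set always contains the shrinking ball $B(\tilde y(s),r(s))$, with $r(s):=\sqrt{(c_\star D)^2-As}$. Set $M^*:=\sup_m\|p_m\|_{L^\infty(Q_T)}$ (finite and universal by Lemma \ref{uniformb}) and pick a universal $R^*$ so large that $\mathrm{supp}\,p_m(\cdot,t+s)\subset B(\tilde y(s),R^*)$ for $s\in[0,\delta]$ (again possible uniformly in $m$ by Lemma \ref{uniformb}). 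On the moving annulus $\Sigma(s):=\{r(s)<|w|<R^*\}$ take
\[
\varphi(x,t+s)=\alpha(s)|w|^{2-d}+\beta(s)-\varepsilon|w|^2/(2d)\qquad(d\geq 3),
\]
with an analogous logarithmic expression for $d=2$, where $\varepsilon>0$ is a small universal constant and $\alpha(s),\beta(s)$ are fixed by the boundary conditions $\varphi|_{|w|=r(s)}=0$ and $\varphi|_{|w|=R^*}=2M^*$; extend by $0$ inside and by $2M^*$ outside. A direct calculation, parallel to the one in the proof of Lemma \ref{L.4.1} but performed on the original scale (so that the Aronson--B\'enilan estimate is never invoked), verifies that $\varphi$ is a supersolution of \eqref{1.1} in $\Sigma(s)$ provided $-(r^2)'(s)\geq A_0$ for a universal constant $A_0$ depending only on $d,M^*,\varepsilon,R^*$ and the data in \eqref{1.7}; this is met by taking $A\geq A_0$. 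The extension remains a weak supersolution on all of $\bbR^d$ by \cite[Lemma 2.6]{kimzhang21}. The key reason this works uniformly in $m$ is that $r(s)\geq r(\delta)>0$ and $R^*$ is universal, so the coefficients $\alpha,\beta$ together with $|\nabla\varphi|$ and $\Delta\varphi$ are bounded independently of $m$.

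At the initial time, $\varphi(\cdot,t)\geq p_m(\cdot,t)$ pointwise: both sides vanish inside $B(y_0,c_\star D)$ and outside $B(y_0,R^*)$; on the intermediate annulus the super-harmonic profile $\varphi$ attains $2M^*\geq p_m$ on the outer sphere and, for a small enough universal $\varepsilon$, rises from zero at the inner sphere with a slope of order $M^*/(c_\star D)$ that outpaces the growth of $p_m$ past its free boundary. The parabolic comparison principle then yields $p_m\leq\varphi$ on $\bbR^d\times[t,t+\delta]$, and since $y=\tilde y(\delta)$ is the center of $B(\tilde y(\delta),r(\delta))$, the inequality $(c_\star D)^2>A\delta$ forces $\varphi(y,t+\delta)=0$ and therefore $p_m(y,t+\delta)=0$, contradicting $y\in\Omega_{p_m}(t+\delta)$. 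Hence $D\leq(\sqrt{A}/c_\star)\sqrt\delta$, as desired. The main technical obstacle is this pointwise initial comparison on the thin annular region just outside $\partial B(y_0,c_\star D)$: one must show that the steep rise of the super-harmonic profile $\varphi$ from zero dominates the behavior of $p_m$ at its free boundary uniformly in $m$, which works because $M^*$ and the free-boundary growth rate of $p_m$ are both universally bounded while $r(s)$ stays away from zero on $\Sigma(s)$.
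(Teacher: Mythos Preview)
There is a genuine gap in the initial comparison step. You need $\varphi(\cdot,t)\geq p_m(\cdot,t)$ on the annulus $c_\star D<|w|<R^*$, and you justify this by asserting that $\varphi$ ``rises from zero at the inner sphere with a slope of order $M^*/(c_\star D)$ that outpaces the growth of $p_m$ past its free boundary'' because ``the free-boundary growth rate of $p_m$ is universally bounded''. This last claim is false: there is no uniform-in-$m$ pointwise bound on $|\nabla p_m|$ near the free boundary. Indeed, as $m\to\infty$ the pressure converges to $p_\infty$, which in the Hele-Shaw limit is typically discontinuous across $\partial\{p_\infty>0\}$; for large $m$, $p_m$ can climb from $0$ to a value of order $1$ over a spatial distance that shrinks to zero. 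Thus for any fixed radial barrier with finite inner slope, one can find $m$ large enough so that $p_m>\varphi$ somewhere just outside $|w|=c_\star D$, and the comparison collapses before it starts.

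The paper circumvents this by choosing a different geometry for the comparison domain. After rescaling so that $p_m(\cdot,t_0)=0$ on $B(x_0,R)$ becomes $v(\cdot,0)=0$ on $B_1$, the barrier $\varphi$ is set up on the ball $B_1$ itself (not on a larger region): its \emph{outer} boundary is $\partial B_1$, where one prescribes $\varphi=\sigma(t)\geq C_1\tau/R^2$, and its shrinking inner boundary is $\partial B_{r(t)}$ with $r(t)<1$, where $\varphi=0$. The initial comparison $v(\cdot,0)\leq\varphi(\cdot,0)$ on $B_1$ is then automatic because $v(\cdot,0)\equiv 0$ there, and the lateral comparison on $\partial B_1$ uses only the uniform $L^\infty$ bound $v\leq C_1\tau/R^2$ from Lemma~\ref{uniformb}. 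No information about the behaviour of $p_m$ near its free boundary is ever needed. The paper then iterates this local step (each application shrinks the radius by a fixed factor over a time $\sim R^2$) to cover the full interval $[t,t+\delta]$. Your one-shot barrier with outer boundary at the universal $R^*$ cannot be salvaged without essentially reverting to this geometry.
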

\begin{proof}
To prove this proposition, it suffices to show that there exists $c>0$ such that for any $x_0\in \bbR^d$ and $t_0\geq 0$ and $R\in (0,1)$, if $p_m(\cdot,t_0)=0$ in $B(x_0,R)$, then $p_m(\cdot,t_0+cR^2)=0$ in $B\big(X(x_0,t_0;cR^2),\frac{R}3\big)$. The general conclusion follows from iteratively applying this claim.

\medskip

Let us recall $\|p_m\|_{L^\infty(\bbR^n\times [0,T])}\leq C_1$ for some $C_1>0$ uniformly for all $m>1$.
Take $(x_0,t_0)$ such that $\dist(x_0,\Gamma(t_0))=R\in(0,1)$. Without loss of generality, suppose $x_0=0$ and $t_0=0$. 
With $\bar{X}(t):=\frac{1}{R}X(0,0;\tau t)$, we define
\[
v(x,t):=\frac{\tau}{R^2}{p}_m(Rx+R\bar{X}(t),\tau t),
\]
which satisfies $v(x,0)=0$ for $x\in B_1$ and
\beq\lb{5.5}
{v}_t - (m-1){v}(\Delta {v}+\bar{F})- |\nabla {v}|^2-\nabla{v}\cdot (\bar{ b }(x+\bar{X},t)-\bar{ b }(\bar{X},t))=0
\eeq
with $\bar{ b }(x,t):=\frac{\tau}{R} { b }(Rx,\tau t)$, $ \bar{f}(x,t,v):=\tau f(Rx,\tau t, \frac{R^2}{\tau}v)$, and 
\[
\bar{F}(x,t,v):=\nabla\cdot \bar{ b }(x+\bar{X},t)+\bar{f}(x+\bar{X},t,v).
\]
Then there is $C_2\geq 1$ such that for all $(x,t)\in B_1\times(0,1)$,
\beq\lb{5.2}
|\bar{F}(x,t)|\leq C_2\tau\quad\text{ and }\quad |\bar{ b }(x+\bar{X},t)-\bar{ b }(\bar{X},t)|\leq \|\nabla{ b }\|_\infty|x|\leq C_2\tau.
\eeq
By taking $\tau$ to be small, we assume $\eps:=C_2\tau<1$.

Now let us construct a supersolution $\varphi$.
For $t\in (0,1)$, let $\sigma(t):=\frac{C_1\tau}{R^2}(1+\frac{t}2)$, $r(t):=\frac{2}{3}-\frac{t}{3}$ and
\[
\Sigma:=\left\{(x,t)\,|\, x\in B_1\setminus B_{r(t)},\,t\in (0,1)\right\}.
\]
Then set $\varphi(x,t)$ to be the solution to
\beq\lb{5.4}
\left\{
\begin{aligned}
-\Delta\varphi&= \eps\quad &&\text{ in }\Sigma,\\
\varphi&=\sigma(t) \quad &&\text{ on }\partial B_{1},\\
\varphi&=0\quad &&\text{ on }\partial B_{r(t)}.
\end{aligned}
\right.
\eeq
We define $\varphi(x,t)= 0$ if $x\in B_{r(t)}$.
Then we can argue as in the proof of Lemma \ref{L.4.1} to obtain that, given $\tau=cR^2$ with $c\ll 1$ being a universal constant, $\varphi$ is a supersolution to \eqref{5.5} in $B_1\times (0,1)$.
Since ${v}(x,0) = 0$ in $B_{1}$ and
$v\leq \frac{C_1\tau}{R^2}\leq \varphi$ for $(x,t)\in\partial B_{1}\times (0,1)$, 
the comparison principle yields
${v}\leq \varphi$ in $B_{1} \times (0,1)$.
In particular, 
\[
\frac{\tau}{R^2}p_m(Rx+R\bar{X}(1),\tau)\leq \varphi(x,1)=0\]
for $|x|<\frac{1}{3}$. This proves the claim and the conclusion follows. 
%
\end{proof}

\subsection{Strict expansion after the initial time}
\label{sec: strict expansion at later time}
In this subsection, we show strict and uniform-in-$m$ expansion of solutions after time $0$.
The point is to propagate the strict expansion property of the support of solutions from the initial time to {\it all finite times} uniformly for all values of $m$ and regardless of possible topological changes on the free boundary.
This will be achieved in Lemma \ref{P.4.2}, where we will assume \eqref{5.22} below.
Note that several of our estimates rely on the AB estimate \eqref{ineq fund}, which has a singularity at time $0$.
Though it is not obvious, the assumption \eqref{5.22} is made to overcome this difficulty.
We will prove \eqref{5.22} in Lemma \ref{P.4.5} by using {\bf (S)}.
The main result of this section will be presented in Proposition \ref{L.5.3}.

In the rest of the section, we take $m\geq 2$.

\smallskip

The following lemma states that, given the strict expansion of the free boundaries at a time scale $\tau$, the free boundaries should expand strictly at smaller time scales.
Thanks to the results shown in Section \ref{S3}, we can prove this as in \cite[Theorem 3.2]{CFregularity}, while we further need to follow the streamlines.

\begin{lemma}\lb{L.4.3}
There exist $\gamma > 4$ and $c>0$ such that the following holds for all $m\geq2$. Let $(x_0,t_0)\in\Gamma$, and let $\tau\ll1$ satisfy
\beq\lb{cS}
0<4\tau/3< c_0\min\left\{1,{t_0}/2\right\},
\eeq
where $c_0$ is from Remark \ref{rmk: uniform c_0}.
If for some $R>0$ we have
\beq\lb{5.22}
p_m(\cdot,t_0-\tau)=0\quad\text{in }  B(X(x_0,t_0;-\tau),R),
\eeq
then for any $ s\in [0,\tau]$,
\[
p_m(\cdot,t_0-s)=0\quad\text{in }B\left(X(x_0,t_0;-s),c(s/{\tau})^\gamma R\right).
\]
\end{lemma}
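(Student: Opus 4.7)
The plan is to iterate Lemma~\ref{L.4.1} dyadically along the backward streamline through $(x_0, t_0)$, tuning the ratio between time and radius shrinkage so as to produce an exponent exceeding $4$, with all constants drawn exclusively from ingredients that are already $m$-uniform (Lemma~\ref{uniformb} and Lemma~\ref{L.4.1}).

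First I would introduce the geometric sequences $s_k := \tau(3/4)^k$ and $\rho_k := R/6^k$ for $k = 0, 1, 2, \ldots$, and the backward streamline $y(s) := X(x_0, t_0; -s)$, then establish inductively that $p_m(\cdot, t_0 - s_k) \equiv 0$ on $B(y(s_k), \rho_k)$. The base case $k = 0$ is precisely the hypothesis. For the inductive step from $k$ to $k+1$, I would apply Lemma~\ref{L.4.1} at reference time $t_0 - s_k$, reference point $y(s_k)$, reference radius $\rho_k$, and time increment $s_k/4$. The semigroup identity for streamlines,
\[
X\bigl(y(s_k),\, t_0 - s_k;\, s_k/4\bigr) = X(x_0, t_0; -s_k + s_k/4) = y(s_{k+1}),
\]
places the lemma's conclusion squarely at $y(s_{k+1})$, yielding vacuum on $B(y(s_{k+1}), \rho_k/6) = B(y(s_{k+1}), \rho_{k+1})$.

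The hypotheses of Lemma~\ref{L.4.1} are to be verified with $\eta_0 := t_0/2$ (legitimate since $\tau < t_0/2$): the standing bound $4\tau/3 < c_0\min\{1, t_0/2\}$ controls the time increment $s_k/4$ at every step, while the mean-pressure smallness condition reduces, via the uniform $L^\infty$-bound from Lemma~\ref{uniformb}, to the numerical inequality $27^k \leq 4c_0 R^2/(\|p_m\|_\infty \tau)$. The recursions force $\rho_k = R(s_k/\tau)^\gamma$ with $\gamma := \log 6/\log(4/3)$, and since $(4/3)^4 = 256/81 < 6$ we have $\gamma > 4$ as required. For intermediate $s \in (s_{k+1}, s_k)$ within the iteration range I would reapply Lemma~\ref{L.4.1} at reference time $t_0 - s_k$ with the shorter time increment $s_k - s \in (0, s_k/4]$; the same semigroup identity lands on $y(s)$ and gives vacuum on $B(y(s), \rho_{k+1})$, and the bound $\rho_{k+1} \geq c(s/\tau)^\gamma R$ then drops out of $(s/\tau)^\gamma \leq (s_k/\tau)^\gamma = \rho_k/R$ after choosing, say, $c \leq 1/6$.

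The main obstacle lies at scales beyond the maximal index $k^*$ permitted by the average-pressure condition, where the uniform $L^\infty$-bound is too weak to continue the iteration. My plan here is to truncate the iteration slightly earlier, at some $k^{**} \leq k^*$ chosen to guarantee $\rho_{k^{**}+1} \geq C_2\sqrt{s_{k^{**}+1}}$ (with $C_2$ the $m$-uniform constant from Proposition~\ref{P.4.6}), and then to invoke Proposition~\ref{P.4.6} itself to propagate vacuum forward in time along the streamline: this gives vacuum of radius at least $\rho_{k^{**}+1} - C_2(s_{k^{**}+1} - s)^{1/2}$ around $y(s)$ at time $t_0 - s$ for all $s \in [0, s_{k^{**}+1}]$. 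Combined with the scaling identity $\rho_k/\sqrt{s_k} = (R/\sqrt{\tau})\,27^{-k/2}$, this residual radius majorizes $c(s/\tau)^\gamma R$ after possibly shrinking $c$. The degenerate case $R \lesssim \sqrt{\tau}$ in which the iteration cannot even begin is handled directly by Proposition~\ref{P.4.6}, since with $\gamma > 4$ the claimed radius $c(s/\tau)^\gamma R$ is much smaller than the residual $R - C_2(\tau - s)^{1/2}$ on the relevant range.
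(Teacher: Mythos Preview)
Your iteration of Lemma~\ref{L.4.1} never uses the hypothesis $(x_0,t_0)\in\Gamma$, and this is a genuine gap: without that hypothesis the conclusion is false when $R$ is small relative to $\sqrt\tau$, so any argument that ignores it must break somewhere. Concretely, in your ``degenerate case'' $R\lesssim\sqrt\tau$ you claim Proposition~\ref{P.4.6} yields vacuum of radius $R-C_2(\tau-s)^{1/2}$ around $y(s)$, but this residual is \emph{negative} for all $s<\tau-R^2/C_2^2$, i.e.\ for almost the whole interval $[0,\tau]$; so Proposition~\ref{P.4.6} gives nothing there. And Lemma~\ref{L.4.1} cannot be invoked either, since the average-smallness hypothesis $\|p_m\|_\infty\le 4c_0\rho_k^2/s_k$ already fails at $k=0$. (To see the hypothesis is not a formality: a tiny vacuum bubble of radius $R\ll\sqrt\tau$ at time $t_0-\tau$ can get completely filled by time $t_0-\tau+O(R^2)\ll t_0-\tau/2$; then there is no vacuum near $y(\tau/2)$ at time $t_0-\tau/2$, yet your argument applies verbatim since it never touches $\Gamma$.)

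The paper's proof uses the free-boundary hypothesis in an essential way, and in the opposite direction from yours. Rather than iterating Lemma~\ref{L.4.1} (``small average $\Rightarrow$ vacuum propagates''), it argues by contradiction using Corollary~\ref{C.2.2} and Lemma~\ref{L.4.2}: if the vacuum at the intermediate time $t_2=t_0-\lambda\tau$ were too small, there would be a free-boundary point $x_2\in\Gamma(t_2)$ close to $X(x_0,t_0;-\lambda\tau)$; Corollary~\ref{C.2.2} then gives a \emph{lower} bound on the average pressure near $x_2$ of order $R^2/((1-\lambda)\tau)$, which is large because $1-\lambda=\gamma^{-1}$ is small; and Lemma~\ref{L.4.2} converts this large average into $p_m(x_0,t_0)>0$, contradicting $(x_0,t_0)\in\Gamma$. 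Since Corollary~\ref{C.2.2} and Lemma~\ref{L.4.2} place no lower bound on $R$, this mechanism works for every $R>0$, and an iteration with ratio $\lambda$ then gives the exponent $\gamma=\log_\lambda\alpha>4$.
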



\begin{proof}
We will write $p=p_m$.
Let $t_1:=t_0-\tau$, $t_2:=t_0-\lambda\tau$ for $\lambda:=(1-\gamma^{-1})\in (\frac34,1)$ with $\gamma$ to be chosen. We start from proving that $x_0$ cannot be too close to $ \{X(x,t_2;\lambda \tau)\,|\,x\in \Gamma(t_2)\}$. Suppose for contradiction that for some $(x_2,t_2)\in\Gamma$ and $y_1:=X(x_2,t_2;\lambda\tau)$,
\beq\lb{4.10}
d(x_0,y_1)=d(x_0,\{X(x,t_2;\lambda \tau)\,|\,x\in \Gamma(t_2)\})<\alpha R,
\eeq
where $\alpha\in (0,\frac{1}{2})$ is to be chosen. 

It follows from 
the ODE of streamlines that for $\tau\ll1$, 
\beq\lb{5.21}
|X(x_0,t_0;-\lambda\tau)-x_2|=|X(x_0,t_0;-\lambda\tau)-X(y_1,t_0;-\lambda\tau)|\leq  e^{ \lambda\tau\|\nabla{ b }\|_\infty}|x_0-y_1|\leq 2\alpha R,
\eeq
\beq\lb{5.21'}
|X(x_0,t_0;-\tau)-X(x_2,t_2;\lambda\tau-\tau)|=|X(x_0,t_0;-\tau)-X(y_1,t_0;-\tau)|\leq  e^{ \tau\|\nabla{ b }\|_\infty}|x_0-y_1|\leq 2\alpha R.
\eeq
By the assumption that $p(\cdot,t_1)=0$ in $B(X(x_0,t_0;-\tau),R)$, 
\eqref{5.21'} implies that 
\[
p(\cdot,t_1)=0\quad\text{in }B(X(x_2,t_2;-(1-\lambda)\tau),(1-2\alpha)R).
\]
With this and the fact $x_2\in\Gamma(t_2)$, applying Corollary \ref{C.2.2} yields that
\[
\mint_{B(x_2,(1-2\alpha)R)}p(x,t_2)\, dx\geq \frac{c_0(1-2\alpha)^2R^2}{(1-\lambda)\tau}.
\]
Thus, also using \eqref{5.21}, we find
\beq\lb{5.20}
\mint_{B(X(x_0,t_0;-\lambda\tau), R)}p(x,t_2)\, dx\geq \frac{c_0(1-2\alpha)^{n+2}R^2}{(1-\lambda)\tau}.
\eeq

Now take $C_1$ and $c_2$ from Lemma \ref{L.4.2} with $\lambda\in [\frac34,1]$.
Then take $\alpha=(1-\gamma^{-1})^\gamma$ and $\gamma$ to be sufficiently large (and thus $\lambda$ is close to $1$) such that
\[
\frac{c_0(1-2\alpha)^{n+2}}{1-\lambda}\geq {C_1}. 
\]
As a consequence, \eqref{5.20} yields
\[
\mint_{B(X(x_0,t_0;-\lambda\tau),R)}p(x,t_2)\, dx\geq \frac{C_1R^2}{{\tau}}.
\]
Also note that \eqref{cS} implies $\tau<\min\{c_0,c_0(m-1)(t_0-\tau),t_0-\tau\}$. Thus we can apply Lemma \ref{L.4.2} to get $p(x_0,t_0)>0$ which contradicts with the assumption that $(x_0,t_0)\in\Gamma$. Thus, we conclude
\[
d(x_0,\{X(x,t_2;\lambda \tau)\,|\,x\in \Gamma(t_2)\})\geq \alpha R.
\]

By iteration, we get for all $n\geq 1$ and $t_{n+1}:=t_0-\lambda^{n}\tau$,
\[
d(x_0,\{X(x,t_{n+1};\lambda^{n} \tau)\,|\,x\in \Gamma(t_{n+1})\})\geq \alpha^{n} R.
\]
By the ODE of streamlines, we know for any $x\in\Gamma(t_{n+1})$ and $\tau\ll 1$,
\[
|X(x_0,t_0;-\lambda^{n}\tau)-x|\geq  e^{ -\lambda^{n}\tau\|\nabla{ b }\|_\infty}|x_0-X(x,t_{n+1};\lambda^{n}\tau)|\geq \alpha^{n} R/2.
\]
Thus, we get for all $\tau\ll1$,
\[
p(\cdot,t_0-\lambda^{n}\tau)=0\quad\text{ in }B(X(x_0,t_0;-\lambda^{n}\tau),\alpha^{n}R/2). 
\]

Finally, note that  by Lemma \ref{streamline}, \eqref{5.22} holds with $\tau$ replaced by $\beta\tau$ for any $\beta\in [1,\frac{4}{3}]$. Because $\lambda\in(\frac34,1)$, by replacing $\tau$ by $\beta\tau$ with $\beta\in [1,\frac{4}{3}]$ in the above argument, we can conclude the assertion of the lemma with $\gamma =\log_\lambda \alpha$.
\end{proof}

The next goal is to propagate the strict expansion property of the free boundaries under the assumption \eqref{5.22} to all finite times. Our approach will quantify the constants in Corollary \ref{C.4.3}, and meanwhile, ensuring that our estimates remain independent of $m$. For simplicity, we shall drop $p_m$ from the notations $\Omega_{p_m}$ and $\Gamma_{p_m}$.

\begin{lemma}\lb{P.4.2}
Let $R>0$ and let $\tau,t_0$ satisfy \eqref{cS}.
There exists a universal constant $\alpha\in (0,1)$ (independent of $m,\tau,t_0,R$) such that
\begin{enumerate}
\item If \eqref{5.22} holds {for all $x_0\in \Gamma(t_0)$}, then for all $n\in \bbZ_+$ and $x\in\Gamma(t_0+n\tau)$ we have
\[
d\big(X(x,t_0+n\tau;-\tau), \Gamma(t_0+(n-1)\tau)\big)\geq \alpha^nR.
\]
\item 
Instead, if 
\[
d\big(X(x_1,t_0+\tau;-\tau), x_0\big)< \alpha R
\]
for some $x_0\in\Gamma(t_0)$ and $x_1\in\Gamma(t_0+\tau)$, then 
\[
d\big(X(x_0,t_0;-\tau), \Gamma(t_0-\tau)\big)<R. 
\]
\end{enumerate}
\end{lemma}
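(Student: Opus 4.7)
The plan is to prove both parts by the same core mechanism: a Gronwall-type streamline perturbation that transports a ``$p_m=0$'' ball from one free-boundary point to a nearby one, followed by an application of Lemma~\ref{L.4.3} at the doubled time scale $2\tau$ to propagate the vanishing forward in time by one $\tau$-step. Part~(1) becomes an induction on $n$; part~(2) is this same step invoked once.

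For part~(1), let $\mathrm{(H)}_n$ denote the statement
\[
p_m(\cdot,t_0+(n-1)\tau)\equiv 0\text{ on }B\bigl(X(x_n,t_0+n\tau;-\tau),\alpha^n R\bigr)\quad\text{for every }x_n\in\Gamma(t_0+n\tau);
\]
then $\mathrm{(H)}_0$ is exactly the hypothesis \eqref{5.22}. I will show $\mathrm{(H)}_n\Rightarrow\mathrm{(H)}_{n+1}$ for every $n\geq 0$, with a universal $\alpha$ to be fixed small. Suppose toward contradiction that $\mathrm{(H)}_{n+1}$ fails: some $x_{n+1}\in\Gamma(t_0+(n+1)\tau)$ and $x_n\in\Gamma(t_0+n\tau)$ satisfy $|X(x_{n+1},t_0+(n+1)\tau;-\tau)-x_n|<\alpha^{n+1}R$. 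A Gronwall estimate on \eqref{ode} gives, for $\tau$ small,
\[
\bigl|X(x_{n+1},t_0+(n+1)\tau;-2\tau)-X(x_n,t_0+n\tau;-\tau)\bigr|\leq e^{\tau\|\nabla b\|_\infty}\alpha^{n+1}R\leq 2\alpha^{n+1}R,
\]
so by triangle inclusion the vanishing in $\mathrm{(H)}_n$ transfers to $B\bigl(X(x_{n+1},t_0+(n+1)\tau;-2\tau),(1-2\alpha)\alpha^nR\bigr)$ at time $t_0+(n-1)\tau$. Applying Lemma~\ref{L.4.3} at the free-boundary point $(x_{n+1},t_0+(n+1)\tau)$ with time scale $2\tau$ and initial radius $(1-2\alpha)\alpha^nR$, evaluated at $s=\tau\in[0,2\tau]$, yields
\[
p_m(\cdot,t_0+n\tau)\equiv 0\quad\text{on}\quad B\bigl(X(x_{n+1},t_0+(n+1)\tau;-\tau),\;c\,2^{-\gamma}(1-2\alpha)\alpha^nR\bigr).
\]
Since $x_n\in\Gamma(t_0+n\tau)=\partial\Omega_{p_m}(t_0+n\tau)$ cannot lie in the open interior of a ball on which $p_m(\cdot,t_0+n\tau)$ is identically zero, this forces $|X(x_{n+1},t_0+(n+1)\tau;-\tau)-x_n|\geq c\,2^{-\gamma}(1-2\alpha)\alpha^nR$, contradicting the assumption as soon as $\alpha<c\,2^{-\gamma}(1-2\alpha)$. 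Fixing any universal $\alpha<c\,2^{-\gamma}/(1+2c\,2^{-\gamma})$ closes the induction uniformly in $n$.

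Part~(2) is the single-step version of exactly this argument. Suppose for contradiction $d(X(x_0,t_0;-\tau),\Gamma(t_0-\tau))\geq R$, i.e.\ $p_m(\cdot,t_0-\tau)\equiv 0$ on $B(X(x_0,t_0;-\tau),R)$. The same Gronwall estimate transports this to $p_m(\cdot,t_0-\tau)\equiv 0$ on $B(X(x_1,t_0+\tau;-2\tau),(1-2\alpha)R)$, and Lemma~\ref{L.4.3} at $(x_1,t_0+\tau)$ with scale $2\tau$ produces $p_m(\cdot,t_0)\equiv 0$ on a ball of radius $c\,2^{-\gamma}(1-2\alpha)R>\alpha R$ around $X(x_1,t_0+\tau;-\tau)$. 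Since $x_0\in\Gamma(t_0)$ lies within distance $\alpha R$ of that center, $x_0$ would sit in the interior of a vanishing ball, which is impossible; hence the starting assumption must fail, which is the claim of part~(2).

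The one technical subtlety I foresee is the verification of condition \eqref{cS} for Lemma~\ref{L.4.3} at the doubled scale $2\tau$, which requires $8\tau/3<c_0\min\{1,(t_0+(n+1)\tau)/2\}$, a factor of two tighter than the \eqref{cS} stated for Lemma~\ref{P.4.2}. This is purely bookkeeping: it can be cleanly absorbed by replacing the universal constant $c_0$ of \eqref{cS} by a fixed smaller multiple of itself (which remains a universal constant and does not propagate to any other estimate, since \eqref{cS} is local to this lemma). The genuinely quantitative point is the choice of $\alpha$; since $c>0$ and $\gamma>4$ are universal constants coming from Lemma~\ref{L.4.3}, the inequality $\alpha<c\,2^{-\gamma}(1-2\alpha)$ is solvable by a small universal $\alpha$, and this single $\alpha$ simultaneously serves the full induction in part~(1) for all $n\in\bbZ_+$ and the contradiction in part~(2).
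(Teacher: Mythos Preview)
Your proof is correct but takes a genuinely different route from the paper. The paper's argument chains three tools: Lemma~\ref{L.4.3} yields a small vanishing ball at an intermediate time $t_0-s$; Corollary~\ref{C.2.2} converts this into a lower bound on the average pressure at time $t_0$ near a free-boundary point; then Lemma~\ref{L.4.2} turns that into strict positivity of $p_m$ at time $t_0+\tau$ in a neighborhood of the flowed point, which forces any $x_1\in\Gamma(t_0+\tau)$ to have its backward streamline land at least $\alpha R$ from $\Gamma(t_0)$. You bypass Corollary~\ref{C.2.2} and Lemma~\ref{L.4.2} entirely, using only Lemma~\ref{L.4.3} at the doubled scale $2\tau$ together with a streamline Gronwall estimate and a direct contradiction (a boundary point cannot sit in the open interior of a zero set). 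Your approach is more self-contained and yields a transparent choice $\alpha<c\,2^{-\gamma}/(1+2c\,2^{-\gamma})$; the cost is the factor-of-two tightening of \eqref{cS}, which as you note is harmless bookkeeping. One small point you leave implicit: the step from ``$(\mathrm{H})_{n+1}$ fails'' to ``there exists $x_n\in\Gamma(t_0+n\tau)$ within $\alpha^{n+1}R$ of $X(x_{n+1},t_0+(n+1)\tau;-\tau)$'' uses Lemma~\ref{streamline} to guarantee that $p_m$ vanishes at that streamline point, so the ball meets both $\{p_m=0\}$ and $\{p_m>0\}$ and hence meets $\Gamma$; this is routine but worth making explicit.
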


\begin{proof}
Let us assume \eqref{5.22}. It follows from Lemma \ref{L.4.3} that
\[
d\big(X(x_0,t_0;-s),\Gamma(t_0-s)\big)\geq  c(s/\tau)^\gamma R
\]
holds for all $ s\in [0,\tau]$. Denote $\alpha_s:=c(s/\tau)^\gamma$ and $R_{1,s}:=\alpha_s R$. Since $(x_0,t_0)\in\Gamma$, it follows from Corollary \ref{C.2.2} that
\beq\lb{5.23'}
\mint_{B(x_0,R_{1,s})}p(x,t_0)\, dx\geq \frac{c_0R_{1,s}^2}{s},
\eeq
which holds for all $x_0\in\Gamma(t_0)$ and $s\in[0,\tau]$ uniformly.  

Now let $c_0$, $C_1$ and $c_2$ satisfy the conditions of Lemma \ref{L.4.2} with $\lambda=1$. Choose $s:= c_0\tau/(2^{d+2} C_1)<\tau$ and set $\alpha:=\alpha_s$ (then $\alpha$ is independent of $\tau,R$) and $R_1:=R_{1,s}=\alpha R$ with this choice of $s$. Then
\eqref{5.23'} yields for all $z\in B(x_0,R_1)$ that
\[
\mint_{B(z,2R_1)}p(x,t_0)\, dx\geq 2^{-d}\mint_{B(x_0,R_1)}p(x,t_0)\, dx\geq  \frac{C_1 (2R_1)^2}{\tau}.
\]
By Lemma \ref{L.4.2}, we get for all $z$ such that $d(z,\Gamma(t_0))\leq R_1$,
\[
p(X(z,t_0;\tau),t_0+\tau)\geq \frac{c_2(2R_1)^2}{\tau}.
\]

Since $x_0\in \Gamma(t_0)$ is arbitrary, we also get for any $x_1\in\Gamma(t_0+\tau)$,
\[
d\big(X(x_1,t_0+\tau;-\tau),\Gamma(t_0)\big)\geq R_1=\alpha R.
\]
With this, we can apply Lemma \ref{L.4.3} again (with $t_0$ and $R$ replaced by $t_0+\tau$ and $R_1$) to get
\[
d\big(X(x_1,t_0+\tau;-s),\Gamma(t_0+\tau-s)\big)\geq  \alpha_s R_1
\]
holds for all $ s\in [0,\tau]$. Identical arguments as the above yield that, for any $x_2\in\Gamma(t_0+2\tau)$, 
\[
d\big(X(x_2,t_0+2\tau;-\tau),\Gamma(t_0+\tau)\big)\geq R_2=\alpha^2 R.
\]
By iterating this argument, for any $x\in\Gamma(t_0+n\tau)$, we obtain that
\[
d\big(X(x,t_0+n\tau;-\tau),\Gamma(t_0+(n-1)\tau)\big)\geq \alpha^n R.
\]

The second claim follows from the first part of the proof.
\end{proof}

In the following lemma, we prove \eqref{5.22} with the assumption {\rm\bf (S)}. 

\begin{lemma}\lb{P.4.5}
Assume {\rm\bf (S)}.
{Given any $t_0$ sufficiently small, for any $\tau$ that is sufficiently small and satisfies \eqref{cS}, there exists $R>0$ such that \eqref{5.22} holds for all $x_0\in \Gamma(t_0)$.
Here the smallness requirements of $t_0$, $\tau$, and $R$ should all depend on {\rm\bf (S)} and the universal constants.}

\end{lemma}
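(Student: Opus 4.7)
The strategy is to trace the free boundary point $x_0\in\Gamma_{p_m}(t_0)$ backward along the streamline, use {\bf(S)} to place its backward image at time $0$ strictly outside $\overline{\Omega_{p_m}(0)}$, and then invoke Corollary \ref{C.4.3}'s dichotomy together with Proposition \ref{P.4.6} to propagate this separation to time $t_0-\tau$.

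Fix $t_0\in(0,\tau_0]$ small, $\tau$ satisfying \eqref{cS}, $m\geq 2$, and $x_0\in\Gamma_{p_m}(t_0)$, and set $y_0:=X(x_0,t_0;-\tau)$, $y_0':=X(x_0,t_0;-t_0)$. Applying {\bf(S)} at $\tau'=t_0$, the inclusion $x_0\notin X(\Omega_{p_m}(0),0;t_0)+B(0,r_{t_0})$ yields $d(x_0,X(\Omega_{p_m}(0),0;t_0))\geq r_{t_0}$, and by the bi-Lipschitz property of $X(\cdot,0;t_0)$ with constant $e^{t_0\|\nabla b\|_\infty}$, flowing back gives
\[
d(y_0',\Omega_{p_m}(0))\geq r_{t_0}/2\quad\text{and}\quad d(y_0,X(\Omega_{p_m}(0),0;t_0-\tau))\geq r_{t_0}/2
\]
for $t_0$ small. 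In particular $y_0'\in(\overline{\Omega_{p_m}(0)})^c$.

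Next, I rule out case (a) of Corollary \ref{C.4.3}'s dichotomy. If $\Upsilon(x_0,t_0)\subseteq\Gamma$, then for each $s\in(0,t_0)$, $X(x_0,t_0;-s)\in\overline{\Omega_{p_m}(t_0-s)}$. Applying Proposition \ref{P.4.6} with $\delta=t_0-s$ gives $\overline{\Omega_{p_m}(t_0-s)}\subseteq\overline{\Omega_{p_m}(0)}+\overline{B(0,C\sqrt{t_0-s}+(t_0-s)\|b\|_\infty)}$, which combined with $|X(x_0,t_0;-s)-y_0'|\leq(t_0-s)\|b\|_\infty$ forces $r_{t_0}/2\leq C\sqrt{t_0-s}+2(t_0-s)\|b\|_\infty$; this fails as $s\to t_0^-$, a contradiction. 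So case (b) of the dichotomy holds. Combined with Lemma \ref{streamline}, which rules out $y_0\in\Omega_{p_m}(t_0-\tau)$ (otherwise the cocycle $x_0=X(y_0,t_0-\tau;\tau)\in\Omega_{p_m}(t_0)$ contradicts $x_0\in\Gamma$), this yields $y_0\in(\overline{\Omega_{p_m}(t_0-\tau)})^c$, and hence $d(y_0,\overline{\Omega_{p_m}(t_0-\tau)})>0$.

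For the uniform-in-$m$ quantitative lower bound on $R$, I would combine the first-step estimate $d(y_0,X(\Omega_{p_m}(0),0;t_0-\tau))\geq r_{t_0}/2$ with Proposition \ref{P.4.6}'s upper bound $\Omega_{p_m}(t_0-\tau)\subseteq X(\Omega_{p_m}(0),0;t_0-\tau)+B(0,C\sqrt{t_0-\tau})$ (with $C$ universal), obtaining $d(y_0,\Omega_{p_m}(t_0-\tau))\geq r_{t_0}/2-C\sqrt{t_0-\tau}$. One may then take $R:=r_{t_0}/4$ uniformly in $m$ and $x_0$, provided the smallness of $t_0$ ensures $r_{t_0}\geq 4C\sqrt{t_0-\tau}$. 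The main obstacle is to verify this inequality under the \eqref{cS}-constraint $\tau<3c_0t_0/8$: it is immediate when {\bf(S)} delivers $r_\tau$ dominating $\sqrt\tau$ as $\tau\to 0$ (expected under the interior ball condition of Lemma \ref{L.4.4}), but is delicate in the $r_\tau\sim\tau^{2/\varsigma_0}$ regime of Lemma \ref{L.5.11} with small $\varsigma_0$; in that regime one needs to refine the approach by iterating the small-time barrier construction of Proposition \ref{P.4.6} starting from the vanishing ball $B(y_0',r_{t_0}/2)$ at time $0$, or equivalently by directly adapting the sub-quadratic growth bound \eqref{introgr}, carefully tracking the accumulated radius so as to keep $R$ strictly positive and uniform in $m$.
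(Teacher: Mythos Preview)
Your argument has a genuine gap in the quantitative step, and the fix you propose does not work. The estimate
\[
d(y_0,\Omega_{p_m}(t_0-\tau))\geq \tfrac{r_{t_0}}{2}-C\sqrt{t_0-\tau}
\]
is only useful if $r_{t_0}\gg\sqrt{t_0-\tau}$. Under the constraint \eqref{cS} one has $\tau<\tfrac{3c_0}{8}t_0$, hence $t_0-\tau\gtrsim t_0$, so you would need $r_{t_0}\gg\sqrt{t_0}$. But neither concrete instance of {\bf(S)} delivers this: Lemma~\ref{L.5.11} gives $r_{t_0}=\tfrac12 t_0^{2/\varsigma_0}$ with $2/\varsigma_0>1$, and the proof of Lemma~\ref{L.4.4} yields $r_\tau\sim\tau^{8/\varsigma_0}$ with $\varsigma_0<1$ (check the choice $t_\eps=\eps^{\varsigma_0/4}$, $r_\eps\sim\eps^2$ there). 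In both cases $r_{t_0}\ll\sqrt{t_0}$, so the right-hand side above is negative for small $t_0$. Your claim that the interior-ball setting gives $r_\tau\gg\sqrt\tau$ is incorrect. Iterating the barrier of Proposition~\ref{P.4.6} does not help either: each step shrinks the radius by a fixed factor while advancing time by $cR^2$, so the total time reachable from $B(y_0',r_{t_0}/2)$ is $O(r_{t_0}^2)\ll t_0$, far short of $t_0-\tau$.

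The paper bypasses this obstruction entirely by running a \emph{backward contrapositive iteration via Lemma~\ref{P.4.2}(2)} rather than pushing a vanishing ball forward from time $0$. Concretely: choose $t_*>0$ so small that Proposition~\ref{P.4.6} upgrades {\bf(S)} at time $t_0$ to the statement that $\Omega(t_0)$ contains the $R_0$-neighborhood of the $(t_0-t_*)$-flow of $\Omega(t_*)$; then take $\tau$ small enough that \eqref{cS} holds with $t_*$ and that $N:=(t_0-t_*)/\tau\in\bbZ_+$. If \eqref{5.22} failed with some small $R$ at $x_0\in\Gamma(t_0)$, Lemma~\ref{P.4.2}(2) produces $x_{-1}\in\Gamma(t_0-\tau)$ with $d(X(x_0,t_0;-\tau),x_{-1})<R$, then $x_{-2}\in\Gamma(t_0-2\tau)$ with $d(X(x_{-1},t_0-\tau;-\tau),x_{-2})<\alpha^{-1}R$, and so on down to $x_{-N}\in\Gamma(t_*)$. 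Summing along the streamline gives $d(X(x_{-N},t_*;N\tau),x_0)\leq C_{\alpha,N}R$, contradicting the $R_0$-separation unless $R\geq C_{\alpha,N}^{-1}R_0$. The key point is that Lemma~\ref{P.4.2}(2), which rests on the pressure-average machinery of Corollary~\ref{C.2.2} and Lemma~\ref{L.4.2}, lets you step backward in units of $\tau$ without paying the $\sqrt{t}$ loss of Proposition~\ref{P.4.6}; your approach never invokes this tool.
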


\begin{proof}
The condition {\rm\bf (S)} yields for each $t_0\in(0,1)$ small enough, there is $R_0>0$ such that
\[
\Omega(t_0)\text{ contains the $2R_0$-neighborhood of }\{X(x,0;t_0)\,|\,x\in\Omega(0)\}.
\]
By Proposition \ref{P.4.6}, for all $t_*<t_0$ being sufficiently small,
\beq\lb{5.77}
\Omega(t_0)\text{ contains the $R_0$-neighborhood of }\{X(x,t_*;t_0-t_*)\,|\,x\in\Omega(t_*)\}.
\eeq
Then for any $\tau>0$ sufficiently small, we can ensure that 
\begin{enumerate}[(i)]
\item \eqref{cS} holds with $t_0$ there replaced by $t_*$. 
Indeed, it suffices to take $3\tau/c_0\leq t_*$ and $\tau<c_0/2$;
\item 
Up to a slight adjustment of $t_*$ (so that \eqref{5.77} is still true), we may assume that $N:=(t_0-t_*)/\tau\geq 2$ is a positive integer, 
which depends only on {\rm\bf (S)} and the universal constants.
\end{enumerate}
Note that we kept $\tau$ arbitrary as long as it is small enough.

Assume that $R>0$ satisfies,
for some $x_{-1}\in\Gamma(t_0-\tau)$ and $x_0\in\Gamma(t_0)$,
\beq\lb{5.78}
d\big(X(x_0,t_0;-\tau), x_{-1}\big) < R.
\eeq
We shall show that $R$ cannot be too small compared with $R_0$.
Since $t_0-2\tau\geq t_*$, the second claim of Lemma \ref{P.4.2} and \eqref{5.78} imply that, for some universal $\alpha\in(0,1)$ and some $x_{-2}\in \Gamma(t_0-2\tau)$, 
\[
d\big(X(x_{-1},t_0-\tau;-\tau), x_{-2}\big)<\alpha^{-1}R.
\]
Recall that $t_0=t_*+N\tau$.
By iteration, we obtain a sequence of points $\{x_{-1},\ldots,x_{-N}\}\subseteq\bbR^d$ such that, $x_{-j}\in \Gamma(t_0-j\tau)$ for $j\in\{1,\ldots,N\}$, and
\[
d\big(X(x_{-j},t_0-j\tau;-\tau), x_{-j-1}\big)<\alpha^{-j}R.
\]
Indeed, this can be done up to $j = N$ because \eqref{cS} holds with $t_0$ replaced by $t_*$ (see the conditions of Lemma \ref{P.4.2}).

For $0\leq j\leq N$, denote
\[
z_j:=X(x_{-j},t_0-j\tau;j\tau).
\]
Using the ODE of streamlines and that $N\tau< t_0< 1$, one can get for $0\leq j\leq N-1$, 
\[
d(z_j,z_{j+1})\leq e^{\|\nabla b\|_\infty (j+1)\tau}d\big(X(x_{-j},t_0-j\tau;-\tau), x_{-j-1}\big)<e^{\|\nabla b\|_\infty}\alpha^{-j}R.
\]
Therefore, there is $C_{\alpha,N}>0$ depending only on $\tau,t_0$, {\rm\bf (S)} and universal constants such that
\[
d\big(X(x_{-N},t_*;N\tau),x_0\big)\leq \sum_{j=1}^{N-1} d(z_j,z_{j+1})\leq C_{\alpha,N}R.
\]
Since $x_{-N}\in \Gamma(t_*)$ and $x_0\in\Gamma(t_0)$, we deduce from \eqref{5.77} that $C_{\alpha,N}R\geq R_0$, which implies (cf.\;\eqref{5.78})
\[
\inf_{x_0\in \Gamma(t_0)\atop x_{-1}\in \Gamma(t_0-\tau)} d\big(X(x_0,t_0;-\tau), x_{-1}\big) \geq C_{\alpha,N}^{-1} R_0.
\]
This finishes the proof.
\end{proof}

Combining these lemmas, we obtain the main result of the section: the strict expansion property of the free boundaries propagates from the initial time, which is given by {\rm\bf (S)}, to all finite times. This is a quantified version of \eqref{5.1} and the constants are independent of $m$. Moreover, we show that the free boundary is weakly non-degenerate in the sense that, on average, $p_m$ near the free boundary should be less degenerate than having quadratic growth.




\begin{proposition}\lb{L.5.3}
Assume {\bf (S)}, let $m\geq 2$ and $T\geq 1$, and let $\gamma>4$ from Lemma \ref{L.4.3}. 
For any $\eta_0\ll 1$, there exist positive constants $\tau\ll1$ and $C_*$ depending on {\bf (S)}, $T,\eta_0$ and the universal constants, such that  
\[
p_m(x,t_0-s)=0 \quad\text{ if }\quad |x-X(x_0,t_0;-s)|\leq C_* s^\gamma \text{ and }s\in [0,\tau]
\]
holds uniformly for all $(x_0,t_0)\in\Gamma$ with $t_0\in [\eta_0,T)$. This is equivalent to that
\[
\{X(x,t_0;-s)\,|\,x\in \Omega(t_0)\}\text{ contains the $C_*s^\gamma$ neighbourhood of }\Omega(t_0-s).
\]

Moreover, there exist $c_\tau,r_\tau>0$ depending only on $T,\eta_0,\tau$ and the universal constants such that for any $r\in (0,r_\tau)$, and $(x_0,t_0)\in \Gamma$ with $t_0\in [\eta_0,T)$,
\[
\mint_{B(x_0,r)}p_m(x,t_0)\,dx\geq c_\tau r^{2-\frac{1}\gamma}.
\]
\end{proposition}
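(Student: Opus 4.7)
The plan is to combine the strict expansion at a small initial time, provided by {\rm\bf (S)} via Lemma~\ref{P.4.5}, with the forward iteration of Lemma~\ref{P.4.2}(1) and a bi-Lipschitz transfer along streamlines, so as to verify the hypothesis of Lemma~\ref{L.4.3} uniformly in $(x_0,t_0)$; the $s^\gamma$ estimate then follows directly from that lemma, while Corollary~\ref{C.2.2} turns it into the non-degeneracy bound.

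\emph{Step 1: strict expansion on a grid.} Choose $t_* := \eta_0/4$ and $\tilde\tau$ small enough (depending on $T$, $\eta_0$, {\rm\bf (S)}, and the universal constants) so that Lemma~\ref{P.4.5} applies with $t_0=t_*$ and $\tau=\tilde\tau$, producing $R_0>0$ for which \eqref{5.22} holds at $t_*$ with parameters $(\tilde\tau,R_0)$. Iterating Lemma~\ref{P.4.2}(1) then yields, for every $n\in[1,\lceil(T-t_*)/\tilde\tau\rceil]$ and every $x\in\Gamma(t_*+n\tilde\tau)$, the lower bound
\[
d\bigl(X(x, t_*+n\tilde\tau; -\tilde\tau),\ \Gamma(t_*+(n-1)\tilde\tau)\bigr) \ \geq\ \alpha^n R_0 \ \geq\ R_* := \alpha^{\lceil T/\tilde\tau\rceil}R_0.
\]
Combining this with the contrapositive of Lemma~\ref{streamline} (which places back-propagations of points in $\Gamma$ inside $\overline{\Omega^c}$), the distance bound upgrades to $B(X(x, t_*+n\tilde\tau; -\tilde\tau), R_*)\subset\Omega^c(t_*+(n-1)\tilde\tau)$, so that \eqref{5.22} holds at every grid time with the uniform pair $(\tilde\tau,R_*)$.

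\emph{Step 2: from grid times to arbitrary time.} Fix $(x_0,t_0)\in\Gamma$ with $t_0\in[\eta_0,T)$, and write $t_0=t_*+n_0\tilde\tau+\delta_0$ with $n_0\geq 1$ and $\delta_0\in[0,\tilde\tau)$. Put $\hat\tau:=\tilde\tau+\delta_0\in[\tilde\tau,2\tilde\tau)$ so that $t_0-\hat\tau$ is the grid time $t_*+(n_0-1)\tilde\tau$; let $z:=X(x_0,t_0;-\delta_0)$ and $y_1:=X(x_0,t_0;-\hat\tau)=X(z,t_*+n_0\tilde\tau;-\tilde\tau)$. The contrapositive of Lemma~\ref{streamline} forces $z\in\overline{\Omega^c}(t_*+n_0\tilde\tau)$. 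Let $L:=e^{\tilde\tau\|\nabla b\|_\infty}$ denote the bi-Lipschitz constant of the streamline flow over time $\tilde\tau$, which is universal. If some $w\in\Gamma(t_*+n_0\tilde\tau)$ satisfies $|w-z|<R_*/(2L)$, then applying Step~1 to $w$ gives $|X(w,t_*+n_0\tilde\tau;-\tilde\tau)-y_1|\leq L|w-z|<R_*/2$; since $B(X(w,t_*+n_0\tilde\tau;-\tilde\tau),R_*)\subset\Omega^c(t_0-\hat\tau)$, we deduce $B(y_1,R_*/2)\subset\Omega^c(t_0-\hat\tau)$. Otherwise $B(z,R_*/(2L))\subset\Omega^c(t_*+n_0\tilde\tau)$, and the bi-Lipschitz image of this ball under the backward flow contains $B(y_1,R_*/(2L^2))$, which lies in $\Omega^c(t_0-\hat\tau)$ by the contrapositive of Lemma~\ref{streamline}. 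In either case, $B(y_1,R_*')\subset\Omega^c(t_0-\hat\tau)$ with $R_*':=R_*/(2L^2)$.

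\emph{Step 3: conclusion via Lemma~\ref{L.4.3} and Corollary~\ref{C.2.2}.} Lemma~\ref{L.4.3} applied at $(x_0,t_0)$ with $(\hat\tau,R_*')$, whose hypothesis has now been verified, gives for $s\in[0,\hat\tau]$ that $p_m(\cdot,t_0-s)=0$ in $B(X(x_0,t_0;-s),\,c(s/\hat\tau)^\gamma R_*')$. Restricting to $s\in[0,\tilde\tau]$ and using $\hat\tau\leq 2\tilde\tau$ yields the first assertion with $C_*:=cR_*'/(2\tilde\tau)^\gamma$. For the second, given small $r>0$, set $s:=(r/C_*)^{1/\gamma}$; the first assertion then gives $B(X(x_0,t_0;-s),r)\subset\Omega^c(t_0-s)$, and provided $r\leq r_\tau:=C_*(c_0\eta_0)^\gamma$ (so that $s\leq c_0\eta_0$, meeting the smallness condition in Corollary~\ref{C.2.2}), that corollary produces
\[
\mint_{B(x_0,r)}p_m(x,t_0)\,dx \ \geq\ \frac{c_0 r^2}{s} \ =\ c_0\,C_*^{1/\gamma}\,r^{2-1/\gamma},
\]
which is the second assertion with $c_\tau:=c_0 C_*^{1/\gamma}$. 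The main obstacle lies in Step~2: Lemma~\ref{P.4.2}(1) only supplies strict backward expansion at the discrete grid times $t_*+n\tilde\tau$, whereas our $t_0$ is arbitrary in $[\eta_0,T)$; the bi-Lipschitz character of the ODE flow $X(\cdot,t;s)$, with a constant depending only on $\|\nabla b\|_\infty$, is precisely what transfers the grid-time emptiness to the off-grid point $y_1$ without sacrificing uniformity in $m$.
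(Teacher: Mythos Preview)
Your proof is correct and follows the same architecture as the paper: use Lemma~\ref{P.4.5} to get strict expansion at a small base time, iterate forward with Lemma~\ref{P.4.2}(1) to reach all later times, feed the resulting uniform version of \eqref{5.22} into Lemma~\ref{L.4.3} to obtain the $s^\gamma$ rate, and then read off the non-degeneracy estimate via Corollary~\ref{C.2.2}.

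The one genuine difference is how you bridge from the discrete grid $\{t_*+n\tilde\tau\}$ to an arbitrary $t_0\in[\eta_0,T)$. The paper sidesteps your Step~2 entirely: instead of fixing a single base time, it applies Lemma~\ref{P.4.5} at $\beta\eta$ for every $\beta\in[1,2]$ with a common $\tau$ and a uniform $R$, so that any $t\in[3\eta,T)$ lies exactly on one of the grids $\{\beta\eta+n\tau:n\geq 0\}$; Lemma~\ref{P.4.2}(1) then applies directly at $t$, and no off-grid transfer is needed. Your route instead fixes a single grid and compensates by enlarging the step to $\hat\tau\in[\tilde\tau,2\tilde\tau)$, then uses the bi-Lipschitz character of the flow (and a dichotomy on whether $z$ is near $\Gamma$) to manufacture \eqref{5.22} at $(x_0,t_0)$ with a slightly smaller radius $R_*'$. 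Both are valid; the paper's trick is shorter and avoids the case split, while your argument makes explicit that the only geometric input beyond the iteration is the $e^{\|\nabla b\|_\infty\tau}$-Lipschitz bound on streamlines, which is a nice clarification of what uniformity in $m$ actually rests on.
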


\begin{proof}
First, we upgrade the conclusion of Lemma \ref{P.4.2}. It follows from Lemma \ref{P.4.5} that, for any $\eta>0$ sufficiently small, there exists $\tau>0$ such that, {for all $\beta\in [1,2]$, both \eqref{cS} and \eqref{5.22} hold with $\beta \eta$ in the place of $t_0$, and with $R>0$ being uniform for all $x_0\in \Gamma(\beta\eta)$.}
Then Lemma \ref{P.4.2} gives that, for all $n\geq 1$ and $x\in\Gamma(\eta+n\tau)$,
\beq\lb{4.12}
d\big(X(x,\eta+n\tau;-\tau), \Gamma(\eta+(n-1)\tau)\big)\geq \alpha^n R,
\eeq
where $\alpha\in (0,1)$ is universal and $R$ depends on $\tau, \eta$ and {\bf (S)}.
Thanks to the way we chose $\eta$, \eqref{4.12} holds with $\eta$ replaced by $\beta \eta$ for any $\beta \in [1,2]$.
Hence, we can further obtain for all $t\in [3\eta,T)$ and $x\in\Gamma(t)$ that
\[
d\big(X(x,t;-\tau),\Gamma(t-\tau)\big)\geq \alpha^{t/\tau} R\geq \alpha^{T/\tau}R=:R_{\tau,T}.
\]
Finally, by Lemma \ref{L.4.3}, there exist universal constants $c>0$ and $\gamma\geq 4$ such that for any $s\in [0,\tau]$,
\beq\lb{4.13}
p(\cdot,t-s)=0\quad\text{in }B \big(X(x,t;-s),c(s/\tau)^{\gamma} R_{\tau,T}\big)
\eeq
for all $t\in [3\eta,T)$ and $x\in\Gamma(t)$.
The result improves the conclusion of Lemma \ref{P.4.2}. Let us emphasize that $R_{\tau,T}$ is uniform for all $m\geq 2$ and $x\in \Gamma(t)$ with $t\in [3\eta,T)$. 
Then the desired claim holds with $\eta_0: = 3\eta$.

\medskip 

Next, fix $(x_0,t_0)\in\Gamma$ with $t_0\geq \eta_0$. We also denote $y_0:=X(x_0,t_0;-s)$ and $r_s:=c(s/\tau)^{\gamma} R_{\tau,T}$ for $s\in [0,\tau]$. 
It follows from \eqref{4.13} that $p_m(\cdot,t_0-s)=0$ in $B(y_0,r_s)$ for $s\in [0,\tau]$.
Since $(x_0,t_0)\in\Gamma$, Corollary \ref{C.2.2} implies that for some $c_0>0$,
\[
\mint_{B(X(y_0,t_0-s;s),r_s)}p_m(x,t_0)\, dx \geq \frac{c_0r_s^2}{s}.
\]
Since $X(y_0,t_0-s;s)=X(X(x_0,t_0;-s),\tau_0-s;s)=x_0$, we obtain that
\[
\mint_{B(x_0,r_s)}p_m(x,t_0)\, dx \geq \frac{c_0r_s^2}{s}=c_\tau r_s^{2-\frac1\gamma},
\]
where $c_\tau:=c_0\tau^{-1}(cR_{\tau,T})^{\frac1\gamma}$.
Since $r_s$ can be arbitrary in $[0,cR_{\tau,T}]$,
$r_\tau$ in the statement can be selected as $cR_{\tau,T}$.
\end{proof}

\section{Convergence of the Free Boundaries}
\label{sec: convergence of fb}
In this section, let us prove convergence of the free boundaries. 
Fix $T>0$ and let $p_m\geq0$ solve \eqref{1.1} in $Q_T$ with continuous initial data $p_m^0$. For $M\geq 1$, define
\beq
\beta_M:= \sup_{M\leq m,l\leq \infty}\|p_m -p_l \|_{L^1(Q_T)}.
\label{eqn: def of beta_M}
\eeq
We will take
\beq\lb{L1}
\lim_{M\to\infty}\beta_M=0
\eeq
as an assumption; this has been justified under suitable conditions, e.g.\;in Theorem \ref{thm: incompressible limit}. Moreover, we assume that the Hausdorff distance between the initial supports of pressures converges to $0$, i.e.,
\beq\lb{L2}
\gamma_M:=\sup_{M\leq m,l < \infty}d_H\big(\{p_m^0(\cdot)>0\},\,\{p_l^0(\cdot)>0\}\big)\to 0\quad\text{as }M\to\infty.
\eeq

We start with the following lemma, which says that it is not likely that one solution $p_l$ has a void region while $p_m$ does not when $l$ and $m$ are large.

\begin{lemma}\lb{L.6.3}
Assume \eqref{1.7}--\eqref{1.8}, \eqref{L1}--\eqref{L2}, and that the conclusion of Proposition \ref{L.5.3} holds. Let $t_0\in (0,T)$ and then $r\ll \min\{1,t_0\}$.
There exists some universal $A\gg 1$ and $M\gg 1$ that depends on $r$ and the assumptions such that, for any $m,l\in [M,\infty)$ and any $x_0\in \Omega_{p_m}(t_0)$, it holds that
\[
B(x_0,Ar)\cap \Omega_{p_l}({t_0})\neq \varnothing.
\]
\end{lemma}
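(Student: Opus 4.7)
The plan is to argue by contradiction. Suppose that $B(x_0, Ar) \cap \Omega_{p_l}(t_0) = \varnothing$, so $p_l(\cdot, t_0) \equiv 0$ on $B(x_0, Ar)$. Let $\tilde{y}(t) := X(x_0, t_0; t - t_0)$ denote the streamline through $(x_0, t_0)$. By Lemma \ref{streamline} applied to $p_l$ (any $t \in [0, t_0)$ with $p_l(\tilde{y}(t), t) > 0$ would propagate forward along the streamline to give $x_0 \in \Omega_{p_l}(t_0)$), we must have $p_l(\tilde{y}(t), t) = 0$ for every $t \in [0, t_0]$; in particular $\tilde{y}(0) \notin \Omega_{p_l}(0)$. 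I would then split into cases according to whether $\tilde{y}(0)$ lies in the closure of the initial support of $p_m$.

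If $\tilde{y}(0) \in \overline{\Omega_{p_m}(0)}$, then \eqref{L2} and the Lipschitz regularity of the initial domains furnish $z \in \Omega_{p_l}(0)$ with $|z - \tilde{y}(0)| \leq C\gamma_M$. Flowing $z$ forward using Lemma \ref{streamline} and the Lipschitz dependence of the flow yields $X(z, 0; t_0) \in \Omega_{p_l}(t_0)$ with $|X(z, 0; t_0) - x_0| \leq C\gamma_M$, contradicting the standing assumption for $M$ large enough.

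In the remaining case $\tilde{y}(0) \notin \overline{\Omega_{p_m}(0)}$, continuity of $p_m$ yields a first-entry time $t^* \in (0, t_0]$ at which the streamline meets $\Gamma_{p_m}$, at $x^* := \tilde{y}(t^*)$. Fix an auxiliary scale $\eta_0$ to be calibrated. For $t^* \in [\eta_0, t_0 - \eta_0]$, I would apply the weak non-degeneracy from Proposition \ref{L.5.3} at $(x^*, t^*)$ (assuming $r \leq r_\tau$) to obtain
\[
\mint_{B(x^*, r)} p_m(y, t^*)\, dy \geq c_\tau\, r^{2 - 1/\gamma}.
\]
Propagating this bound forward to $t \in [t^*, t_0]$ via the decay estimate \eqref{mono stream}, the flowed ball $X(B(x^*, r), t^*; t - t^*)$ still carries integrated $p_m$-mass of order $r^{d + 2 - 1/\gamma}$. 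Meanwhile, backward flow monotonicity applied to the zero set of $p_l$ gives $p_l(\cdot, t) = 0$ on the larger flowed ball $X(B(x_0, Ar), t_0; -(t_0 - t))$, and for $A$ chosen large in terms of $\|\nabla b\|_\infty$ and $T$, the former ball lies inside the latter. Integrating over the time interval of length at least $\eta_0$ then yields $\|p_m - p_l\|_{L^1(Q_T)} \geq c\, \eta_0\, r^{d + 2 - 1/\gamma}$, contradicting \eqref{L1} once $M$ is large.

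The main obstacle is the two degenerate subcases $t^* < \eta_0$ and $t_0 - t^* < \eta_0$, in which the integration interval above may be too short. When $t^* < \eta_0$, Proposition \ref{P.4.6} controls the short-time expansion of $\Omega_{p_m}$ and gives $d(\tilde{y}(0), \overline{\Omega_{p_m}(0)}) \leq C\eta_0^{1/2}$, so the first-case argument (via \eqref{L2} and forward flow) forces $d(x_0, \Omega_{p_l}(t_0)) \leq C(\eta_0^{1/2} + \gamma_M) < Ar$ for $\eta_0$ small and $M$ large. When $t_0 - t^* < \eta_0$, Proposition \ref{P.4.6} instead places a free boundary point $x^{**} \in \Gamma_{p_m}(t_0)$ within $C\eta_0^{1/2}$ of $x_0$; applying the non-degeneracy at $(x^{**}, t_0)$ and extending the time integration slightly past $t_0$ (using that $\Omega_{p_l}$ can only expand at rate $C\delta^{1/2}$ over time $\delta$, again Proposition \ref{P.4.6}) restores the needed lower bound. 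Balancing these subcases drives the natural scaling $\eta_0 \sim (Ar)^2$ with $A$ and $M$ large relative to $r^{-1}$, subject to the compatibility condition $r \leq r_\tau(\eta_0)$.
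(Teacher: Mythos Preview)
Your argument is correct in outline and uses the same core ingredients as the paper (streamline monotonicity, weak non-degeneracy from Proposition~\ref{L.5.3}, forward propagation via \eqref{mono stream}, and an $L^1$-contradiction against $p_l\equiv 0$ on a tube). The paper, however, avoids your three-way subcase split entirely by two simplifications.

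First, rather than tracing only the single streamline $\tilde y(\cdot)$ back to $t=0$ and then branching on whether $\tilde y(0)\in\overline{\Omega_{p_m}(0)}$, the paper flows the \emph{entire ball} $B(x_0,Ar)$ backward. Since $B(x_0,Ar)\subset\Omega_{p_l}(t_0)^c$, Lemma~\ref{streamline} gives a ball of radius $\sim Ar$ inside $\Omega_{p_l}(0)^c$ around $X(x_0,t_0;-t_0)$; choosing $A$ large and invoking \eqref{L2} with $\gamma_M\le r$ then gives a ball of radius $Cr$ inside $\Omega_{p_m}(0)^c$. One forward application of Proposition~\ref{P.4.6} (from time $0$ to $r^2$) shows the streamline point is still outside $\Omega_{p_m}$ at time $r^2$, so the first-entry time automatically satisfies $\tau_0\ge r^2$. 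This single step replaces your Case~1 and your subcase $t^*<\eta_0$ simultaneously.

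Second, the paper always integrates over the fixed window $[t_0,t_0+r^2]$ (which lies in $[\tau_0,T)$ once $\tau_0\le t_0$), so the available integration length is exactly $r^2$ regardless of where $\tau_0$ falls. This eliminates your subcase $t_0-t^*<\eta_0$ and the need to manufacture a nearby point $x^{**}\in\Gamma_{p_m}(t_0)$. The upshot is that the paper fixes $\eta_0=r^2$ once and for all, with no balancing of scales and no compatibility constraint of the form $r\le r_\tau(\eta_0)$ to worry about (one simply takes the non-degeneracy radius small depending on $r$).
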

\begin{proof}
Assume for contradiction that $B(x_0,Ar)\subseteq \Omega_{p_l}({t_0})^c$. Then Lemma \ref{streamline} and the space-time continuity of $p_l$ imply that
\[
X(x,{t_0};-{t_0})\in \Omega_{p_l}(0)^c\quad\text{for all }x\in B(x_0,Ar).
\]
For any $C>0$, if $A=A(C,T)$ is sufficiently large, we get from the ODE of streamlines that 
\[
B\big(X(x_0,t_0;-t_0),(C+1)r\big)\subseteq \Omega_{p_l}(0)^c.
\]
Take $M$ to be large such that $\gamma_M\leq r$ by \eqref{L2}, and we have
\[
B\big(X(x_0,t_0;-t_0),Cr\big)\subseteq \Omega_{p_m}(0)^c.
\]
Then Proposition \ref{P.4.6} yields that
$X(x_0,t_0;-t_0+r^2)\in \Omega_{p_m}(0)^c$ provided that $C$ is large depending only on the universal constants. Since $x_0\in\Omega_{p_m}(t_0)$, the streamline passing through $(x_0,t_0)$ must reach the free boundary at some  time, i.e., there exists $\tau_0\geq r^2$ such that 
\[
X(x_0,t_0;-t_0+\tau_0)\in \Gamma_{p_m}(\tau_0).
\]
This and Proposition \ref{L.5.3} (with $\eta_0$ replaced by $r^2$) imply that, for some $c_r>0$ and for all $R$ sufficiently small (all independent of $m$),
\[
\mint_{B(X(x_0,t_0;-t_0+\tau_0),R)}p_m(x,\tau_0)\,dx\geq c_r R^{2-\frac{1}\gamma}.
\]
Using the assumption on $b$ and \eqref{mono stream}, we know that at later times, the average of $p_m$ over a small ball centered at points on the same streamline is bounded from below, i.e., there exists $c_r'>0$ such that for all $R'$ sufficiently small, and all $t\in [\tau_0,T)$,
\beq\lb{6.666}
\mint_{B(X(x_0,t_0;t-t_0),R')}p_m(x,t)\,dx\geq c_r' (R')^{2-\frac1\gamma}.
\eeq
Here $c_r',R'>0$ are independent of $m$.

However, by the assumption that $B(x_0,Ar)\subseteq \Omega_{p_l}({t_0})^c$ and Proposition \ref{P.4.6}, for all $t\in [{t_0},{t_0}+r^2]$, if $A$ is large enough,
\[
p_l\big(z+X(x_0,{t_0};t-{t_0}),t\big)\equiv 0 \quad \text{for }z\in B\left(0,{Ar}/2\right).
\]
This contradicts with \eqref{6.666} when $\beta_M$ is small enough.
\end{proof}

As an immediate corollary, we obtain that for any $\eta_0,r>0$, if $\infty>l, m\geq M\gg 1$, then for any $t\in [\eta_0,T)$, we have
\[
\Omega_{p_m}(t)\subseteq \text{\,the $Cr$-neighbourhood of $\Omega_{p_l}(t)$},
\]
which further implies that $d_H(\Omega_{p_l}(t),\Omega_{p_m}(t))\to 0$ as $l,m\to \infty$.
This will be included in Theorem \ref{T.main_weaker_assumption} below.

%
%
%

It is then natural to ask whether there is convergence of $\Omega_{p_m}(t)$ to $\Omega_{p_\infty}(t)$ in the Hausdorff distance as $m\to\infty$.
Since the limiting solution $p_\infty$ is not defined pointwise (cf.\;Theorem \ref{thm: incompressible limit}) and may not be continuous, in order to determine $\Omega_{p_\infty}(t)$, we shall take a special version of $p_\infty$ in the rest of the paper as follows.
Let
\[
\tilde{p}_\infty(x,t): = \limsup_{\varepsilon\to 0^+}\frac{1}{\varepsilon}\int_{0}^\varepsilon \mint_{B(x,\varepsilon)} p_\infty(y,t+s)\,dy\,ds.
\]
It is known that $\tilde{p}_\infty=p_\infty$ almost everywhere in $Q_T$, so we shall simply take $\tilde{p}_\infty$ as the special version of $p_\infty$, still denoted by $p_\infty$ in the rest of the paper.
It then holds pointwise that 
\beq
p_\infty(x,t) = \limsup_{\varepsilon\to 0^+}\frac{1}{\varepsilon}\int_{0}^\varepsilon \mint_{B(x,\varepsilon)} p_\infty(y,t+s)\,dy\,ds.
\label{eqn: property of p_inf by def}
\eeq
In particular, if $p_\infty$ is almost everywhere $0$ in a space-time open set $U\subseteq\bbR^{d+1}$, then $p_\infty\equiv 0$ in $U$ pointwise.  
With the pointwise value of $p_\infty$ given by \eqref{eqn: property of p_inf by def}, we can have $\Omega_{p_\infty}(t)$ well-defined.

We also show the following useful property of $p_\infty$: for any $x_0\in \bbR^d$, $t_0\geq 0$, and any $r>0$,
\beq
\mint_{B(x_0,r)}p_\infty(x,t_0)\,dx 
\geq \limsup_{\eps\to 0^+} \frac{1}{\eps} \int_0^\eps \mint_{B(x_0,r)} p_\infty(x,t_0+s)\,dx\,ds.
\label{eqn: link integral of p_inf at a time with a spacetime integral}
\eeq
Indeed, by \eqref{eqn: property of p_inf by def}, the Fatou's lemma, and the fact that $p_\infty$ is a priori bounded (cf.\;Lemma \ref{uniformb} and \eqref{L1}), 
\begin{align*}
\mint_{B(x_0,r)}p_\infty(x,t_0)\,dx 
= &\;\mint_{B(x_0,r)}\limsup_{\eps\to 0^+} \frac{1}{\eps}\int_0^\eps \mint_{B(x,\eps)}p_\infty(y,t_0+s)\,dy\,ds\,dx\\
\geq &\; \limsup_{\eps\to 0^+} \frac{1}{\eps} \;\mint_{B(x_0,r)}\int_0^\eps \mint_{B(x,\eps)}p_\infty(y,t_0+s)\,dy\,ds\,dx \\
\geq &\; \limsup_{\eps\to 0^+} \frac{1}{\eps} \int_0^\eps \frac{1}{|B_r|}\int_{B(x_0,r-\eps)} p_\infty(x,t_0+s)\,dx\,ds\\
= &\; \limsup_{\eps\to 0^+} \frac{1}{\eps} \int_0^\eps \mint_{B(x_0,r)} p_\infty(x,t_0+s)\,dx\,ds.
\end{align*}
In the third line, we used the fact that 
$\mathds{1}_{B_r}*\mathds{1}_{B_\eps}
\geq |B_\eps|\cdot \mathds{1}_{B_{r-\eps}}$.



\medskip

It turns out that the convergence of $\Omega_{p_m}(t)$ to $\Omega_{p_\infty}(t)$ is generally false under the current assumptions, but we can prove the following partial result.

\begin{theorem}\lb{T.main_weaker_assumption}
Assume \eqref{1.7}--\eqref{1.8}, \eqref{L1}--\eqref{L2}, and let $p_m\geq0$ solve \eqref{1.1} in $Q_T$ with the continuous initial data $p_m^0$. Suppose that, uniformly for all $m\geq 1$, 
the conclusion of Proposition \ref{L.5.3} holds.
Then for any $0<\eta_0\ll 1$, there exists $C(\eta_0)>0$ such that for any $0<r\ll 1$, there is $M\gg 1$ satisfying that for all $m\in [M,\infty]$, $l\in [M,\infty)$ and all $t_0\in [\eta_0,T)$, 
\beq\lb{part2}
\Omega_{p_m}(t_0)\subseteq \text{\,the $Cr$-neighbourhood of $\Omega_{p_l}(t_0)$}.
\eeq
Here $M$ depends on $r,\eta_0$ and the conditions.
\end{theorem}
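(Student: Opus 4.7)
My plan is to split into two cases according to whether $m$ is finite or $m = \infty$. The finite case $m, l \in [M, \infty)$ is an immediate application of Lemma~\ref{L.6.3}: for any $x_0 \in \Omega_{p_m}(t_0)$ with $t_0 \in [\eta_0, T)$, one has $B(x_0, Ar) \cap \Omega_{p_l}(t_0) \neq \varnothing$ for some universal $A$, provided $M$ is large enough in terms of $r$ and $\eta_0$. Hence the real content is the case $m = \infty$, which I plan to handle by reducing to a time slightly after $t_0$ where the solution with finite index $m$ can be accessed.

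Specifically, for $x_0 \in \Omega_{p_\infty}(t_0)$, I would invoke the pointwise formula \eqref{eqn: property of p_inf by def}: the condition $p_\infty(x_0, t_0) > 0$ forces the existence of a sequence $\varepsilon_k \searrow 0$ along which
\begin{equation*}
\int_0^{\varepsilon_k}\! \int_{B(x_0, \varepsilon_k)} p_\infty(y, t_0+s)\,dy\,ds \;\geq\; c_d\, p_\infty(x_0, t_0)\, \varepsilon_k^{d+1}.
\end{equation*}
Fixing one such $\varepsilon := \varepsilon_k \leq \min\{cr^2,\, T-t_0\}$, where $c$ is a small universal constant chosen at the end, and invoking the $L^1(Q_T)$-convergence \eqref{L1}, I can guarantee that the analogous integral of $p_m$ over $B(x_0, \varepsilon)\times(t_0, t_0+\varepsilon)$ is also positive for all finite $m \geq M$, provided $M$ is large enough. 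This produces a point $(y_m, s_m) \in B(x_0, \varepsilon) \times (0, \varepsilon)$ with $y_m \in \Omega_{p_m}(t_0+s_m)$, i.e., a finite-$m$ footprint of $x_0$ at a slightly later time.

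The next step is to apply Lemma~\ref{L.6.3} with base time $t_0+s_m \in [\eta_0, T)$ to the pair of finite indices $(m, l)$, yielding $z_m \in B(y_m, Ar) \cap \Omega_{p_l}(t_0+s_m)$. Proposition~\ref{P.4.6} then furnishes $x_l \in \Omega_{p_l}(t_0)$ such that $|z_m - X(x_l, t_0; s_m)| \leq C s_m^{1/2}$; combined with $|X(x_l, t_0; s_m) - x_l| \leq \|b\|_\infty s_m$ from \eqref{ode}, this gives $|z_m - x_l| \leq C(\varepsilon^{1/2} + \varepsilon)$. Chaining the triangle inequality through $x_0 \to y_m \to z_m \to x_l$ produces
\begin{equation*}
|x_0 - x_l| \;\leq\; \varepsilon + Ar + C(\varepsilon^{1/2} + \varepsilon),
\end{equation*}
which is $\leq C' r$ once $\varepsilon \leq cr^2$ with $c$ chosen small in terms of $C$ and $A$.

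The subtlety I anticipate as the main obstacle is uniformity of $M$ in $t_0$ and in $x_0$: both the admissible scale $\varepsilon_k$ and the lower bound $\tfrac{1}{2} p_\infty(x_0, t_0)$ depend on the particular point, so naively the extracted $M$ is point-dependent. My plan for closing this gap is a compactness-plus-contradiction argument: assuming the uniform version fails, I would extract a sequence of counterexamples $(x_{0,n}, t_{0,n}, l_n)$, pass to a limit using the uniform compact support of $\{p_m\}_m$ from Lemma~\ref{uniformb}, and derive a contradiction by combining the weak non-degeneracy from Proposition~\ref{L.5.3} at finite-$m$ free boundary points (which the limit point must be related to, via the streamline monotonicity in Lemma~\ref{streamline}) with the pointwise argument above.
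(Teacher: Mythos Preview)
Your finite-$m$ case matches the paper exactly. For $m=\infty$, your route is correct but longer than necessary, and the ``main obstacle'' you flag is not actually an obstacle.

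On uniformity: the threshold $M$ in the statement constrains only $l$, not the auxiliary finite index you introduce. For each $x_0\in\Omega_{p_\infty}(t_0)$ you are free to choose $\varepsilon=\varepsilon(x_0)$ and then an auxiliary $m=m(x_0)$ as large as you like (in particular $m\geq M_0$, the threshold from Lemma~\ref{L.6.3}). Once you apply Lemma~\ref{L.6.3} to the pair $(m,l)$, the only requirement on $l$ is $l\geq M_0$, which depends on $r$ and $\eta_0$ alone. So your chain $x_0\to y_m\to z_m\to x_l$ already gives $d(x_0,\Omega_{p_l}(t_0))\leq C'r$ for all $l\geq M_0$, uniformly in $x_0$ and $t_0$. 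The compactness-plus-contradiction detour is unnecessary.

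The paper's argument for $m=\infty$ is shorter and avoids the per-point construction altogether. It takes the contrapositive: fix $l\geq M_0$ and observe that the finite-$m$ case forces $p_m(x,t)=0$ for every finite $m\geq M_0$ and every $(x,t)$ with $t\in[\eta_0,T)$ and $x$ outside the $Cr$-neighbourhood of $\Omega_{p_l}(t)$. Passing $m\to\infty$ via \eqref{L1} gives $p_\infty=0$ almost everywhere in that region, and then \eqref{eqn: property of p_inf by def} upgrades this to $p_\infty=0$ pointwise in the interior. Hence $\Omega_{p_\infty}(t_0)$ lies in the $Cr$-neighbourhood of $\Omega_{p_l}(t_0)$. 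This bypasses the pointwise extraction, Proposition~\ref{P.4.6}, and any dependence of intermediate quantities on $x_0$; it also makes the uniformity in $(x_0,t_0)$ automatic.
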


\begin{proof}
When both $m$ and $l$ are finite, the result follows from Lemma \ref{L.6.3}.

When $m=\infty$ and $l<\infty$, since the result holds for finite $m$ and $l$, we know that $p_m(x,t)=0$ for all $m\geq l\geq M$ and $(x,t)$ such that
\[
t\in [\eta_0,T)\quad\text{and}\quad 
x\notin\text{\,the $Cr$-neighbourhood of $\Omega_{p_l}(t)$}.
\]
Thus, after passing to the limit $m\to\infty$, \eqref{L1} and \eqref{eqn: property of p_inf by def} imply that $p_\infty=0$ pointwise in the interior of the same region, which concludes the proof of \eqref{part2}.
\end{proof}

\begin{remark}\label{rmk: counterexample for FB convergence to p_inf}
We remark that, in general,
\[
\Omega_{p_m}(t_0)\not\subseteq \text{\,the $Cr$-neighbourhood of $\Omega_{p_\infty}(t_0)$},
\]
even for $m$ sufficiently large.
Let us provide an example in a formal way.

Take $b\equiv 0$ and $f(x,t,p) := 2-p$.
For $m>1$, define
\[
p_m^0(x):=
\begin{cases}
(\frac12 + \frac12|x|^2)^{m-1} &\mbox{if }|x|\leq 1,\\
1&\mbox{if }|x|\in (1,\frac32],\\
4-2|x|&\mbox{if }|x|\in (\frac32,2],\\
0&\mbox{if }|x|>2.
\end{cases}
\]
We define $\varrho_m^0 = P_m^{-1}(p_m^0)$ and let $\varrho^0$ be the $L^1(\bbR^d)$-limit of $\varrho_m^0$ as in \eqref{main3} and \eqref{assumption: L^1 convergence of initial density}.
Consider \eqref{main}--\eqref{main2} with the initial data $\varrho_m^0$ and its incompressible limit \eqref{1.2} (also see Theorem \ref{thm: incompressible limit}) with the initial data $\varrho^0$.
In this setting, all the assumptions of Theorem \ref{T.main_weaker_assumption} can be verified (cf.\;Theorem \ref{thm: incompressible limit} and Lemma \ref{L.4.4}).

Since $\{p_m^0>0\} = \{\varrho_m^0>0\} = B_2$ and the problem is rotation-invariant, as time goes by, one can expect that $\Omega_{p_m}(t) = \{p_m(\cdot,t)>0\}$ remains to be a disk centered at the origin.
On the other hand,
\[
\varrho^0(x):=
\begin{cases}
\frac12 + \frac12|x|^2 &\mbox{if }|x|\leq 1,\\
1&\mbox{if }|x|\in [1,2),\\
0&\mbox{if }|x|\geq 2.
\end{cases}
\]
In view of the density constraint $\varrho_\infty\leq 1$, $\varrho^0$ has the ``saturated" region $\{|x|\in [1,2)\}$ and the ``unsaturated" regions elsewhere.
Given the complementarity condition $p_\infty(1-\varrho_\infty) = 0$ in \eqref{1.2}, we expect $\{p_\infty(\cdot,t)>0\}$ to be an annular region for $t\ll 1$, which has two separate free boundaries;
in particular, $\{p_\infty(\cdot,0)>0\}= \{|x|\in (1,2)\}$.
Therefore, for $t\ll 1$, $\Omega_{p_m}(t)$ is not contained in a small neighborhood of $\Omega_{p_\infty}(t)$ even for $m\gg 1$.

Interested readers may consult \cite{kim2018porous} for rigorous analysis of the solutions and the free boundaries in a similar setting, where the density in the ``unsaturated" region is assumed to be strictly less than $1$.

\end{remark}

In the following theorem, we further study convergence of the free boundaries.
This involves studying the distance between points on $\Gamma_{p_m}$ and $\Omega_{p_l}$, as well as the distance between points on $\Gamma_{p_m}$ and the complement of $\Omega_{p_l}$.
Here and in what follows, we shall use the notations $\Gamma_{p_\infty}(t):=\pa\Omega_{p_\infty}(t) = \pa\{p_\infty(\cdot,t)>0\}$ and $\Gamma_{p_\infty}: = \cup_{t\in (0,T)}\Gamma_{p_\infty}(t)\times \{t\}$.

\begin{theorem}\lb{C.6.2_weaker_assumption}
Under the assumptions of Theorem \ref{T.main_weaker_assumption}, for any $0<\eta_0\ll 1$, there exists $C(\eta_0)>0$ such that for any $0<r\ll 1$ and then $M\gg 1$, we have for any $m\in [M,\infty)$, $l\in [M,\infty]$, any $t_0\in [\eta_0,T)$ and $x_0\in\Gamma_{p_m}(t_0)$, it holds that
\[
d\big(x_0,\Omega_{p_l}(t_0-s)\big)\leq Cr\quad\text{for all }s\in [0,r^2],
\]
and
\[
d\big(x_0,\Omega_{p_l}(t_0-r)^c\big)\leq Cr.
\]
Here $M$ depends on $\eta_0$, $r$ and the conditions.

Consequently, 
\[
\sup_{x_0\in \Gamma_{p_m}(t_0) }d \big(x_0,\Gamma_{p_l}(t_0-r^2)\big)\leq Cr.
\]
\end{theorem}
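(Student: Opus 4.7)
The plan is to prove the two displayed bounds separately and then combine them for the final consequence. Throughout, fix parameters as in the statement and let $x_0 \in \Gamma_{p_m}(t_0)$.

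For the first bound, my plan is to combine Lemma~\ref{L.6.3} with approximation and then propagate in time via Proposition~\ref{P.4.6}. Since $x_0 \in \overline{\Omega_{p_m}(t_0)}$, I approximate it by interior points $y_n \to x_0$ with $y_n \in \Omega_{p_m}(t_0)$; Lemma~\ref{L.6.3} produces $z_n \in B(y_n,Ar)\cap \Omega_{p_l}(t_0)$, and passing to a limit yields some $z^* \in \Omega_{p_l}(t_0)$ within $Cr$ of $x_0$, handling the case $s=0$. For $s \in (0,r^2]$, Proposition~\ref{P.4.6} applied to $p_l$ furnishes $w \in \Omega_{p_l}(t_0-s)$ with $|X(w,t_0-s;s)-z^*|\leq Cs^{1/2}$; combining with $|X(w,t_0-s;s)-w|\leq \|b\|_\infty s$ and using $s^{1/2}\leq r$, the triangle inequality yields $|x_0-w|\leq C'r$.

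For the second bound, the natural candidate is $z_0 := X(x_0,t_0;-r)$, which satisfies $|z_0-x_0|\leq \|b\|_\infty r$. If $z_0 \in \Omega_{p_l}(t_0-r)^c$ we are immediately done, so I argue by contradiction, assuming $B(x_0,Ar)\subseteq \Omega_{p_l}(t_0-r)$ for some large universal $A$. By Proposition~\ref{L.5.3} applied to $p_m$ at $(x_0,t_0)\in\Gamma_{p_m}$, one has $p_m=0$ on $B(X(x_0,t_0;-\sigma),C_*\sigma^\gamma)$ at each earlier time $t_0-\sigma$ with $\sigma\in[0,\tau]$, so $p_m\equiv 0$ on the backward space-time cone
\[
R := \bigcup_{\sigma \in [r/2,r]} B\bigl(X(x_0,t_0;-\sigma),\,C_*\sigma^\gamma/2\bigr)\times\{t_0-\sigma\},
\]
of measure $|R| \gtrsim r^{1+d\gamma}$. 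The contradiction hypothesis combined with Lemma~\ref{streamline} (forward flow of the support) forces $p_l>0$ throughout $R$; the $L^1(Q_T)$-closeness then bounds $\int\!\!\!\int_R p_l \leq \beta_M$. I expect the main technical obstacle to be upgrading ``$p_l>0$ on $R$'' to a uniform-in-$l$ pointwise lower bound $p_l \geq c_0$ on a subset $S \subseteq R$ whose measure is bounded below, after which $c_0|S| \leq \beta_M \to 0$ delivers the contradiction. My plan for this step is to locate a free-boundary point $(y^*,t^*)\in \Gamma_{p_l}$ at controlled space-time distance from $R$ (guaranteed by the compact support of $p_l$ from Lemma~\ref{uniformb} together with the swapped Theorem~\ref{T.main_weaker_assumption}, which forces $\Gamma_{p_l}(t_0-r)$ to lie within $Cr$ of $\overline{\Omega_{p_m}(t_0-r)}$, while the strict-expansion gap of $p_m$ keeps $z_0$ separated from $\Omega_{p_m}(t_0-r)$), then apply the weak non-degeneracy clause of Proposition~\ref{L.5.3} to $p_l$ at $(y^*,t^*)$ and propagate the resulting average lower bound pointwise into $R$ via the forward Aronson--B\'enilan monotonicity from Lemma~\ref{streamline}.

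Finally, for the consequence, given $x_0\in \Gamma_{p_m}(t_0)$, the first bound with $s=r^2$ supplies $y_1\in \Omega_{p_l}(t_0-r^2)$ with $|x_0-y_1|\leq Cr$, and the second bound applied with $r$ replaced by $r^2$ supplies $y_2\in \Omega_{p_l}(t_0-r^2)^c$ with $|x_0-y_2|\leq Cr^2 \leq Cr$. The segment from $y_1$ to $y_2$ must cross $\Gamma_{p_l}(t_0-r^2)$ at some $y^*$, and $|x_0-y^*| \leq |x_0-y_1|+|y_1-y_2| \leq Cr$ yields the stated distance bound.
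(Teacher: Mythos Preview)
Your argument for the first bound is fine when $l<\infty$, but has a genuine gap when $l=\infty$: Lemma~\ref{L.6.3} is stated only for $m,l\in[M,\infty)$, and Proposition~\ref{P.4.6} is a PME result that does not apply to $p_\infty$. The paper handles $l=\infty$ by a different route: it invokes the weak non-degeneracy clause of Proposition~\ref{L.5.3} at $(x_0,t_0)\in\Gamma_{p_m}$ to get a uniform lower bound on the spatial average of $p_m$ near $x_0$, propagates this to all large finite-index $p_k$ at slightly later times via Lemma~\ref{streamline} and~\eqref{L1}, and then passes to the limit using the representation~\eqref{eqn: link integral of p_inf at a time with a spacetime integral} to conclude that $\mint_{B(x_0,r')}p_\infty(x,t_0)\,dx>0$, hence $d(x_0,\Omega_{p_\infty}(t_0))\leq r'$. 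The extension to $s\in(0,r^2]$ is then obtained by first locating, via Proposition~\ref{P.4.6} and Lemma~\ref{streamline} applied to $p_m$, a point $x_s\in\Gamma_{p_m}(t_0-s)$ with $|x_0-x_s|\leq Cr$, and repeating the argument at $(x_s,t_0-s)$.

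For the second bound, your contradiction route via $L^1$-norms is much heavier than needed, and the step you flag as the ``main technical obstacle'' is not resolvable by the plan you sketch: under the hypothesis $B(x_0,Ar)\subseteq\Omega_{p_l}(t_0-r)$, the free boundary $\Gamma_{p_l}(t_0-r)$ lies at distance $\geq Ar$ from $x_0$ and hence from $R$, so there is no free-boundary point of $p_l$ at controlled distance from $R$ on which to invoke weak non-degeneracy. The paper's argument is direct and avoids all of this. Proposition~\ref{L.5.3} gives $B(X(x_0,t_0;-s),C_*s^\gamma)\subseteq\Omega_{p_m}(t_0-s)^c$; then apply Theorem~\ref{T.main_weaker_assumption} with the roles of $m$ and $l$ swapped and with a \emph{much smaller} parameter $r'$ chosen so that $Cr'\leq\tfrac12 C_*(s/2)^\gamma$ (this fixes how large $M$ must be). Since $\Omega_{p_l}(t_0-\zeta)$ then lies in the $Cr'$-neighbourhood of $\Omega_{p_m}(t_0-\zeta)$ for each $\zeta\in(s/2,s]$, one concludes $B(X(x_0,t_0;-\zeta),\tfrac12 C_*\zeta^\gamma)\subseteq\Omega_{p_l}(t_0-\zeta)^c$, and in particular $d(x_0,\Omega_{p_l}(t_0-s)^c)\leq|x_0-X(x_0,t_0;-s)|\leq Cs$. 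The case $l=\infty$ follows by combining the finite-$l$ conclusion over the open range $\zeta\in(s/2,s]$ with~\eqref{L1} and~\eqref{eqn: property of p_inf by def}. Your derivation of the final consequence from the two bounds is correct.
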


\begin{remark}
As is discussed before, due to the presence of the drift, topological changes might occur on the support of the solutions. One should expect that holes can form in the support and then get filled up after some time. At the time when a hole disappears, the (spatial) Hausdorff distance between the free boundaries can change drastically. Thus, when comparing solutions with different indices, we cannot hope for a small spatial Hausdorff distance between their free boundaries at the same instant. 
This drives us to bound the space-time Hausdorff distance between the free boundaries. 

Indeed, our result essentially implies that, after any positive time, 
\begin{itemize}
\item the space-time Hausdorff distance between $\Gamma_{p_m}$ and $\Gamma_{p_l}$ diminishes as $m,l\to\infty$;

\smallskip

\item moreover, as $m\to \infty$, $\Gamma_{p_m}$ will get close to $\Gamma_{p_\infty}$, but may not approach every point of it, which is natural given the example in Remark \ref{rmk: counterexample for FB convergence to p_inf}.
\end{itemize}
\end{remark}

\begin{proof}
It follows from Theorem \ref{T.main_weaker_assumption} that, for any $\eta_0>0$ and any sufficiently small $r>0$, there exists $M$ sufficiently large such that for any $(x_0,t_0)\in \Gamma_{p_m}$ with $t_0\geq \eta_0>0$, $m\in [M,\infty)$, and $l\in [M,\infty)$, we have
$d(x_0,\Omega_{p_l}(t_0))\leq Cr$.
Then the first conclusion with finite $m$ and $l$ follows from Proposition \ref{P.4.6}.

In the case $m <\infty$ and $l = \infty$, we argue as in the proof of Lemma \ref{L.6.3}.
By Proposition \ref{L.5.3}, 
there exists some $c\ll 1$ such that, 
for any $r\ll 1$ and any $x_0\in \Gamma_{p_m}(t_0)$ with $t_0\in [\eta_0,T)$,
\[
\mint_{B(x_0,r)}p_m(x,t_0)\,dx\geq c r^{2-\frac{1}\gamma}.
\]
By \eqref{L1}, \eqref{mono stream}, and the assumptions on $b$, there exists $c'\ll 1$ such that for any $r'\ll 1$, if $m,k\in[M,\infty)$ with $M\gg 1$ depending on $r'$, 
\[
\mint_{B(x_0,r')}p_k(x,t)\,dx
\geq C\;\mint_{B(X(x_0,t_0;t-t_0),2r')} p_k(x,t)\,dx
\geq c' (r')^{2-\frac{1}\gamma}
\]
holds for any $t\in [t_0,t_0+\delta]$ with $\delta\ll r'$.
Taking the limit $k\to \infty$ and using \eqref{L1}, we obtain that for almost every $t\in [t_0,t_0+\delta]$,
\[
\mint_{B(x_0,r')} p_\infty(x,t)\,dx\geq c' (r')^{2-\frac{1}\gamma}.
\]
Fix $r'\ll 1$.
Thanks to \eqref{eqn: link integral of p_inf at a time with a spacetime integral},
\begin{align*}
\mint_{B(x_0,r')}p_\infty(x,t_0)\,dx 
\geq &\; \limsup_{\eps\to 0^+} \frac{1}{\eps} \int_0^\eps \mint_{B(x_0,r')} p_\infty(x,t_0+s)\,dx\,ds\\
\geq &\; \limsup_{\eps\to 0^+} \frac{1}{\eps} \int_0^\eps c'(r')^{2-\frac{1}{\gamma}}\,ds = c'(r')^{\frac{1}{\gamma}}.
\end{align*}
Hence, for $r'\ll 1$, there exists $M\gg 1$, such that for any $(x_0,t_0)\in \Gamma_{p_m}$ with $t_0\geq \eta_0>0$ and $m\in [M,\infty)$, we have
$d(x_0,\Omega_{p_\infty}(t_0))\leq Cr'$.

By Proposition \ref{P.4.6} and Lemma \ref{streamline}, for any $r\ll 1$ and $s\in [0,r^2]$, there exists $x_s\in \Gamma_{p_m}(t_0-s)$ such that $|x_0-x_s|\leq Cr$.
Repeating the above argument with $(x_0,t_0)$ replaced by $(x_s,t_0-s)$, we obtain the first conclusion for $m<\infty$ and $l = \infty$ as desired.


\medskip

Next we prove the second conclusion.
Since $m\in [M,\infty)$ and $x_0\in \Gamma_{p_m}(t_0)$, Proposition \ref{L.5.3} gives that for $s\in [0,\tau]$ and $\tau\ll1$ depending on $\eta_0$,
\[
B\big(X(x_0,t_0;-s),C_*s^\gamma)\big)\subseteq \Omega_{p_m}(t_0-s)^c.
\]
Fix an arbitrary $s\in [0,\tau]$. 
Taking $r$ such that $Cr\leq \frac12 C_*(s/2)^\gamma$ with $C$ from Theorem \ref{T.main_weaker_assumption}, we find that if $M=M(r,\eta_0)$ is sufficiently large, for all $l\in[M,\infty)$ and $\zeta\in (s/2,s]$,
\[
B\big(X(x_0,t_0;-\zeta), C_*\zeta^\gamma/2)\big)\subseteq \Omega_{p_l}(t_0-\zeta)^c.
\]
Thanks to \eqref{L1} and \eqref{eqn: property of p_inf by def}, this also holds for $l = \infty$.
Therefore, with $\zeta = s$,
\[
d\big(x_0,\Omega_{p_l}(t_0-s)^c\big)\leq d\big(x_0,B(X(x_0,t_0;-s), C_*s^\gamma/2)\big)\leq Cs.
\]

\end{proof}


In order to obtain improved convergence results involving the limiting solution, especially ruling out the case described in Remark \ref{rmk: counterexample for FB convergence to p_inf},
we additionally make the following assumption:
\beq\lb{L3}
\gamma_M':= \sup_{M\leq m<\infty} d_H\big(\{p_m^0(\cdot)>0\},\,\{p_\infty(\cdot,0)>0\}\big)\to 0\quad\text{as }M\to\infty.
\eeq
%
%
%
We also introduce the following notion of the ``good part of the boundary" 
of $\{p_\infty(\cdot,t)>0\}$:
\beq\lb{6.112}
\begin{aligned}
\tilde{\Gamma}_{p_\infty}(t):= \{x\in \bbR^d:&\;\text{for any small $r>0$, there exist space-time open sets }U_r,V_r\subseteq B(x,r)\times (-r,r)\\
& \text{ such that } p_\infty(\cdot, t)\text{ is essentially positive in $U_r$ and $p_\infty=0$ in $V_r$\}}.    
\end{aligned}
\eeq
Note that this may not coincide with $\Gamma_{p_\infty}(t)$.
Also denote $\tilde{\Gamma}_{p_\infty}:=\cup_{t\in (0,T)} \tilde{\Gamma}_{p_\infty}(t)\times \{t\}$.

Then we can show the following result.



\begin{theorem}\lb{T.main}
Under the assumptions of Theorem \ref{T.main_weaker_assumption} and also \eqref{L3},
for any $0<\eta_0\ll 1$, there exists $C(\eta_0)>0$ such that for any $0<r\ll 1$, there is $M\gg 1$ which depends on $\eta_0$, $r$ and the conditions, satisfying that: 
\begin{enumerate}
\item For any $m\in [M,\infty)$ and $t_0\in [\eta_0,T)$,
\beq\lb{part2_repeat}
\Omega_{p_m}(t_0)\subseteq \text{\,the $Cr$-neighbourhood of $\Omega_{p_\infty}(t_0)$}.
\eeq
Consequently, \eqref{part2} holds for all $m,l\in [M,\infty]$ with $M$ sufficiently large.


\item 

For any $l\in [M,\infty)$, $t_0\in [\eta_0,T)$ and any $x_0\in\tilde{\Gamma}_{p_\infty}(t_0)$, 
\[
d(x_0,\Omega_{p_l}(t_0-s))\leq Cr\quad\text{for all }s\in [0,r^2].
\]
Moreover, with $M$ additionally depending on the initial data and yet with $l$, $t_0$, and $x_0$ satisfying the same conditions as above, 
it holds that
\[
d(x_0,\Omega_{p_l}(t_0-r)^c)\leq Cr.
\]
Therefore, for all $l\in [M,\infty]$ with $M$ sufficiently large,
\[
\sup_{x_0\in \tilde{\Gamma}_{p_\infty}(t_0) }d \big(x_0,\Gamma_{p_l}(t_0-r^2)\big)\leq Cr.
\]
\end{enumerate}


\end{theorem}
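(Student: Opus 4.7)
The plan is to adapt the arguments of Lemma \ref{L.6.3} and Theorem \ref{C.6.2_weaker_assumption} to exploit the additional hypothesis \eqref{L3} and the pointwise representation \eqref{eqn: property of p_inf by def} of $p_\infty$. For Part~(1), we will first establish \eqref{part2_repeat} for finite $m\in[M,\infty)$; the case $m=\infty$ then follows by combining with Theorem \ref{T.main_weaker_assumption}. Arguing by contradiction, we suppose $x_0\in \Omega_{p_m}(t_0)$ satisfies $B(x_0,Ar)\cap \Omega_{p_\infty}(t_0)=\varnothing$ for some universal constant $A\gg 1$. We would mimic the proof of Lemma \ref{L.6.3}, with $\Omega_{p_l}$ replaced by $\Omega_{p_\infty}$: tracing the streamline through $(x_0,t_0)$ backward and using \eqref{L3} in place of \eqref{L2}, together with Proposition \ref{P.4.6}, should force the streamline to cross $\Gamma_{p_m}$ at some time $\tau_0\geq r^2$. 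Otherwise $X(x_0,t_0;-t_0)$ would lie in $\Omega_{p_m}(0)$ and hence, by \eqref{L3}, be within $\gamma_M'$ of some $z_0\in\Omega_{p_\infty}(0)$; we would then appeal to \eqref{eqn: property of p_inf by def} and \eqref{L1} to transfer positivity to some nearby $(y_l,s_l)$ with $p_l(y_l,s_l)>0$ for $l$ large, and apply Lemma \ref{streamline} to flow $(y_l,s_l)$ forward to time $t_0$, producing a point of $\Omega_{p_l}(t_0)$ close to $x_0$; the $m=\infty$ half of Theorem \ref{T.main_weaker_assumption} would then force $d(x_0,\Omega_{p_\infty}(t_0))\leq Cr$, a contradiction.

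At the crossing point, Proposition \ref{L.5.3} will furnish the weak non-degeneracy $\mint_{B(\cdot,R)}p_m(\cdot,\tau_0)\gtrsim R^{2-1/\gamma}$. We would then propagate this forward by \eqref{mono stream}, with a Jacobian change of variables along the streamline, to obtain a uniform-in-$m$ lower bound on the spatial average of $p_m$ over a moving ball around $X(x_0,t_0;t-t_0)$ for $t$ in a small window $[t_0-\delta,t_0+\delta]\subset[\tau_0,T)$. Integrating in time and passing to the $L^1(Q_T)$ limit via \eqref{L1}, we would extract a space-time point $(y,t)$ near $(x_0,t_0)$ with $p_\infty(y,t)>0$, and then flow $(y,t)$ forward along the streamline, exactly as in the first paragraph, to recover a point of $\Omega_{p_\infty}(t_0)$ within $Cr$ of $x_0$, contradicting $B(x_0,Ar)\cap\Omega_{p_\infty}(t_0)=\varnothing$.

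For Part~(2), we would fix $x_0\in \tilde{\Gamma}_{p_\infty}(t_0)$ and pick $\rho$ comparable to $r$. Definition \eqref{6.112} supplies open sets $U_\rho,V_\rho\subset B(x_0,\rho)\times(t_0-\rho,t_0+\rho)$ on which $p_\infty$ is essentially positive and identically zero, respectively. For the first estimate, we would locate $(y,t)\in U_\rho$ with $p_\infty(y,t)>0$, use \eqref{L1} to find a nearby $(y_l,t_l)$ with $p_l(y_l,t_l)>0$, and then combine Lemma \ref{streamline} with Proposition \ref{P.4.6} to produce a point of $\Omega_{p_l}(t_0-s)$ within $Cr$ of $x_0$ for every $s\in[0,r^2]$. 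For the second estimate, we would exploit $V_\rho$: by \eqref{L1} the $L^1$-average of $p_l$ over $V_\rho$ is small, so many $(y',t')\in V_\rho$ will have $p_l(y',t')=0$; Proposition \ref{L.5.3} applied to $p_l$ together with Lemma \ref{streamline} will ensure that this zero set propagates backward in time in a controlled way, delivering a point of $\Omega_{p_l}(t_0-r)^c$ within $Cr$ of $x_0$. The concluding Hausdorff-distance statement then follows by the triangle inequality. The hard part will be in Part~(1): bootstrapping from the pointwise (limsup) value of $p_\infty$ and the $L^1(Q_T)$-convergence of $\{p_m\}$ to a quantitative statement about $\Omega_{p_\infty}(t_0)$ at the specific time $t_0$, since streamline monotonicity is available directly only for the continuous pressures $p_m$, $p_l$ and not for $p_\infty$, forcing the contradiction to be routed through an intermediate finite index and Theorem \ref{T.main_weaker_assumption}, while the threshold $\tau_0\geq r^2$ secured by \eqref{L3} is what ultimately controls the singular constants in Propositions \ref{L.5.3} and \ref{P.4.6}.
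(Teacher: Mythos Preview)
Your plan has the right ingredients but a genuine gap in Part~(1). You correctly identify the crux—streamline monotonicity is not available for $p_\infty$—yet your proposed workaround, routing through a finite index $l$ and invoking Theorem~\ref{T.main_weaker_assumption}, is circular. That theorem gives only $\Omega_{p_\infty}(t_0)\subseteq\{\text{$Cr$-neighbourhood of }\Omega_{p_l}(t_0)\}$, whereas to conclude $d(x_0,\Omega_{p_\infty}(t_0))\leq Cr$ from a point of $\Omega_{p_l}(t_0)$ near $x_0$ you need the \emph{opposite} inclusion, which is precisely \eqref{part2_repeat}. The same circularity bites at the end of your main case: once you extract $(y,t)$ with $p_\infty(y,t)>0$ at some $t$ in a window around $t_0$, you cannot bring it to time exactly $t_0$ without monotonicity for $\Omega_{p_\infty}$, and routing through $p_l$ runs into the same wrong-direction issue. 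A related problem is your claim $\tau_0\geq r^2$: in Lemma~\ref{L.6.3} this relies on tracing back the whole ball $B(x_0,Ar)$ (outside $\Omega_{p_l}$) and using \eqref{L2} to land it outside $\Omega_{p_m}(0)$; here you only have $B(x_0,Ar)\cap\Omega_{p_\infty}(t_0)=\varnothing$, and without monotonicity for $\Omega_{p_\infty}$ you cannot trace that back.

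The paper resolves this by first \emph{proving} streamline monotonicity for $\Omega_{p_\infty}$ as a short preliminary: for $x\in\Omega_{p_\infty}(t)$, \eqref{eqn: property of p_inf by def} gives $\int_0^\varepsilon\!\int_{B(x,\varepsilon)}p_\infty>0$ for small $\varepsilon$; by \eqref{L1} the same holds for $p_l$ with $l$ large, so some nearby point lies in $\Omega_{p_l}$; apply \eqref{mono stream} for $p_l$, pass back to the limit via \eqref{L1} and \eqref{eqn: property of p_inf by def}. With this in hand, the paper works at a shifted target time: assuming $B(y_0,2Ar)\subseteq\Omega_{p_\infty}(t_0+r^2)^c$, monotonicity of $\Omega_{p_\infty}$ traces this back to $B(x_0,Ar)\subseteq\Omega_{p_\infty}(t_0)^c$ and further to time $0$, where \eqref{L3} transfers it to $\Omega_{p_m}(0)^c$; now the Lemma~\ref{L.6.3} mechanism (Proposition~\ref{P.4.6} for $\tau_0\geq r^2$, Proposition~\ref{L.5.3}, and \eqref{mono stream}) yields \eqref{6.666} on $[t_0,t_0+r^2]$; \eqref{L1} then gives $B(x_0,Cr)\cap\Omega_{p_\infty}(t_0+\delta)\neq\varnothing$ for some $\delta\in[0,r^2]$, and monotonicity of $\Omega_{p_\infty}$ flows this \emph{forward} to $t_0+r^2$, contradicting the assumption. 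For your Part~(2) second estimate, once Part~(1) is available the paper's route is cleaner: from $B(x_1,r_1)\subseteq\Omega_{p_\infty}(t_1)^c$ one gets $B(x_1,r_1/2)\subseteq\Omega_{p_l}(t_1)^c$ directly by \eqref{part2_repeat}, then Lemma~\ref{streamline} backward for $p_l$; your proposed use of small $L^1$-average plus Proposition~\ref{L.5.3} does not obviously produce a zero of $p_l$, since that proposition gives non-degeneracy only at free-boundary points.
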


\begin{proof}

To prove \eqref{part2_repeat}, we start by showing that $\Omega_{p_\infty}(t)$ is non-decreasing along the streamlines for $t\geq 0$. 
Indeed, for any $x\in \Omega_{p_\infty}(t)$, by \eqref{eqn: property of p_inf by def}, for any sufficiently small $\eps>0$, $\int_0^\eps\int_{B(x,\eps)}p_\infty(y,t+s)\,dy\,ds>0$. By \eqref{L1}, the same holds with $p_l$ in place of $p_\infty$ for all $l$ sufficiently large. Then \eqref{mono stream}, \eqref{L1}, and \eqref{eqn: property of p_inf by def} yield the claim first for $t>0$ and then for all $t\geq 0$.
%

Let $y_0\in\Omega_{p_m}(t_0+r^2)$ and assume for contradiction that $B(y_0,2Ar)\subseteq\Omega_{p_\infty}(t_0+r^2)^c$ for some large $A>0$.
Proposition \ref{P.4.6} and the regularity of $b$ yield that there exists $x_0\in \Omega_{p_m}(t_0)$ such that $|x_0-y_0|\leq Cr$ and, if $A\geq 2C$, $ B(x_0,\frac32Ar)\subseteq \Omega_{p_\infty}(t_0+r^2)^c$. The monotonicity property of $\Omega_{p_\infty}$ then yields that $B(x_0,Ar)\subseteq \Omega_{p_\infty}(t_0)^c$.
With the monotonicity property again and \eqref{L3}, an identical argument as in Lemma \ref{L.6.3} can show that \eqref{6.666} holds for all $t\in [t_0,t_0+r^2]$ and all large but finite indices $m$. 
Hence, by \eqref{L1}, $p_\infty$ cannot be identically $0$ in
$B(x_0,Cr)\times [t_0,t_0+r^2]$.
So there exists $\delta\in [0,r^2]$ such that
\[
B(x_0,Cr)\cap \Omega_{p_\infty}(t_0+\delta)\neq \varnothing.
\]
%
Since $\Omega_{p_\infty}(t)$ is non-decreasing along the streamlines, for some $C'>C$,
\[
B(y_0,C'r)\cap \Omega_{p_\infty}(t_0+r^2)\neq \varnothing,
\]
which implies
\[
\Omega_{p_m}(t_0+r^2)
\subseteq \text{\,the $C'r$-neighbourhood of $\Omega_{p_\infty}(t_0+r^2)$}.
\]
Replacing $t_0$ by $t_0-r^2$ yields \eqref{part2_repeat}.



%

\medskip

Next we prove the second part of the statement.
It follows from Theorem \ref{T.main_weaker_assumption} that, for any $\eta_0>0$ and any sufficiently small $r>0$, there exists $M$ sufficiently large such that for any $(x_0,t_0)\in \Gamma_{p_\infty}$ with $t_0\geq \eta_0>0$ and any $l\in [M,\infty)$, we have $d(x_0,\Omega_{p_l}(t_0))\leq Cr$.
The first conclusion then follows from Proposition \ref{P.4.6}.

By the definition of $\tilde{\Gamma}_{p_\infty}(t_0)$, for any small $r>0$, there exists $(x_1,t_1)\in B(x_0,r)\times (t_0-r,t_0+r)$ such that $B(x_1,r_1)\subseteq \Omega_{p_\infty}(t_1)^c$ for some $r_1\in (0,r)$. 
By \eqref{part2_repeat}, $B(x_1,r_1/2)\subseteq \Omega_{p_l}(t_1)^c$ for all $l$ sufficiently large.
It follows from Lemma \ref{streamline} that for any $s\in [t_0-r,t_1)$,
\[
\{ X(y,t_1;s-t_1):y\in B(x_1,r_1/2)\}\subseteq \Omega_{p_l}(s)^c.
\]
Hence, for all $l\in [M,\infty]$ with $M$ sufficiently large, 
\[
d\left(x_0,\Omega_{p_l}(t_0-r)^c\right)\leq d\left(x_0,X(x_1,t_1;t_0-r-t_1)\right)\leq Cr.
\]
Let us remark that here $M$ depends on $(x_0,t_0)\in \tilde{\Gamma}_{p_\infty}$. By a compactness argument, it then only depends on the initial data, the assumptions, and the universal constants.
\end{proof}

\begin{remark}\lb{R.6.1}
Theorems \ref{T.main_weaker_assumption}--\ref{T.main} address the convergence after some positive time $\eta_0$, with $\eta_0$ being arbitrary.
Let us briefly discuss behavior of the supports of the solutions within time $[0,\eta_0]$ with $\eta_0\ll 1$.
In this regime, the convergence stems directly from that of the initial data.
\begin{enumerate}[(1)]
\item When $t\in [0,\eta_0]$, under the assumption \eqref{L2}, there exists a universal $C$ such that
\beq\lb{6.111}
\Omega_{p_m}(t)\subseteq \text{\,the $C\eta_0^{1/2}$-neighbourhood of $\Omega_{p_l}(t)$}
\eeq
for all $m,l\in [M,\infty)$ such that $\gamma_M\leq \eta_0^{1/2}$.
This follows immediately from Lemma \ref{streamline} and Proposition \ref{P.4.6}. 

\item For the case $m=\infty$ and $l\in [M,\infty)$, by Proposition \ref{P.4.6}, $p_l(x,t)=0$ for all $t\in [0,\eta_0]$ and $x$ outside the $C\eta_0^{1/2}$-neighborhood of $\{p_{l}^0(\cdot)>0\}$.
Thanks to \eqref{L2}, when $M$ is sufficiently large depending on $\eta_0$, for any $l,m'\in [M,\infty)$, 
$p_{m'}(x,t)=0$ for all $t\in [0,\eta_0]$ and $x$ outside the $C\eta_0^{1/2}$-neighborhood of $\{p_l^0(\cdot)>0\}$ with a larger $C$.
Sending $m'\to \infty$ and using \eqref{L1}, we obtain that $p_\infty(x,t)=0$ almost everywhere in the same region.
Thanks to \eqref{eqn: property of p_inf by def} and Lemma \ref{streamline}, we obtain \eqref{6.111} with $m=\infty$ and $l\in [M,\infty)$. Note that the assumption \eqref{L3} is not needed here.

\item To have \eqref{6.111} valid for $m\in [M,\infty)$ and $l=\infty$, we need to assume \eqref{L3}. Indeed, this follows from \eqref{L3}, the monotonicity property of $\Omega_{p_\infty}(\cdot)$ (see the proof of Theorem \ref{T.main}), and Proposition \ref{L.5.3}.  

\end{enumerate}
\end{remark}

\section{Hausdorff Dimensions of the Free Boundaries}\lb{S6}

In this section, we estimate the Hausdorff dimension of the free boundary $\Gamma_{p_m}(t)$ for each $t>0$ and finite $m$, and then extend that to $\tilde{\Gamma}_{p_\infty}(t)$.

Let us start with some assumptions. 
The first one is on the density variable of the solution to the PME-type equations; it can be verified under suitable conditions (see e.g.\;Theorem \ref{thm: properties of solutions to PME}).

\begin{itemize}
    \item[{\bf(H1)}] Stability of the densities in $L^1$: there exists $C$ depending on the universal constants such that, if $\varrho_1,\varrho_2$ are two continuous, non-negative solutions to \eqref{main}, then for all $t\in (0,T)$,
\[
\left|\int_{\bbR^d}\varrho_1(x,t)-\varrho_2(x,t)\, dx\right|\leq C\int_{\bbR^d}|\varrho_1-\varrho_2|(x,0)\, dx.
\]
Moreover, we assume Lipschitz continuity in $t$ of the total mass: for any $t,s\in [0,T)$, 
\[
\left|\int_{\bbR^d}\varrho_1(x,t)\, dx-\int_{\bbR^d}\varrho_1(x,s)\, dx\right|\leq C|t-s|.
\]
\end{itemize}

The next condition is technical, which is a strengthening of \eqref{cond} and is used to guarantee that certain modifications of the density variables are sub- or super-solutions to \eqref{main}; see Lemma \ref{L.6.1}.
\begin{itemize}
\item[{\bf(H2)}] There exists $\tilde{\sigma}>0$ such that
\[
\nabla \cdot b(x,t)+f(x,t,p)\geq \tilde{\sigma}>0\quad\text{and}\quad f_p(x,t,p)\leq 0\quad\text{ for }(x,t,p)\in Q_T\times [0,\infty).
\]
\end{itemize}

Finally, we also need the initial density to enjoy $L^1$-stability under certain pertubations.
\begin{itemize}
    \item[{\bf(H3)}] There exists $\zeta:(0,1)\times [2,\infty)\to (0,\infty)$ satisfying
\[
\limsup_{r\to0}r^{-\sigma_m}\zeta(r,m)\leq C_m \quad\text{ for some }C_m>0,\sigma_m\in (0,1],
\]
and that for all $r$ sufficiently small and $m\geq 2$, the initial density variable satisfies
\beq\lb{6.0}
\int_{\bbR^d}\left(\sup_{y\in B(0,r)}\varrho_m(x+y,0) -\inf_{y\in B(0,r)}\varrho_m(x+y,0)\right)dx\leq \zeta(r,m).
\eeq
\end{itemize}

\begin{remark}\lb{R3}
Recall that $\varrho_m(\cdot,0)=P_m^{-1}(p_m^0)$. If $p_m^0$ are characteristic functions of some bounded open sets whose boundaries have uniformly bounded finite $(d-1)$-dimensional Hausdorff measure, then the condition {\bf(H3)} holds with $\zeta(r,m)\equiv Cr$ for some $C>0$.

As for continuous initial datum, if we assume
\begin{enumerate}[(a)]
\item \eqref{introgr} holds with the power $2-\varsigma_0$ replaced by $\varsigma_m\in (0,1)$;
\smallskip

\item $p_m^0$ is uniformly bounded and uniformly Lipschitz continuous for all $m\geq2$; 

\smallskip

\item and for all $m\geq 2$, $\partial\{p_m^0>0\}$ has uniformly bounded finite $(d-1)$-dimensional Hausdorff measure;
\end{enumerate}
then the condition {\bf(H3)} holds with $\zeta(r,m)=C r+\frac{C}{m-1}r^{1-\varsigma_m}$ for some $C$ independent of $m$.
Indeed, by virtue of the assumptions, for fixed $m\geq 2$ and all sufficiently small $r$, measure of the set 
\[
\calN:= \big\{x\in \bbR^d \,|\, d(x,\pa\Omega_{p_m}(0))\leq 2r\big\} 
\]
is bounded by $Cr$. 
If $x\in\Omega_{p_m}(0)$ and $d(x,\Omega_{p_m}(0)^c)\geq r$, by \eqref{introgr} with $2-\varsigma_0$ replaced by $\varsigma_m$, we have $p_m(x,0)\geq \gamma_0 r^{\varsigma_m}$. Therefore, if $x,y$ are such points, the Lipschitz condition yields
\[
|\varrho_m(x,0)-\varrho_m(y,0)|\leq \big|(p_m^0)^\frac1{m-1}(x)-(p_m^0)^\frac1{m-1}(y)\big|\leq \frac{C}{m-1}r^{-\varsigma_m}|x-y|.
\]
Therefore,
\begin{align*}
&\;\int_{\bbR^d}\left(\sup_{y\in B(x,r)}\varrho_m(y,0) -\inf_{y\in B(x,r)}\varrho_m(y,0)\right)dx\\
\leq &\;\int_{\calN}\sup_{y\in B(x,r)}\varrho_m(y,0)\,dx
+\int_{\Omega_{p_m}(0)\setminus \calN}\left(\sup_{y\in B(x,r)}\varrho_m(y,0) -\inf_{y\in B(x,r)}\varrho_m(y,0)\right)dx\\
\leq &\; C r+\frac{C}{m-1}r^{1-\varsigma_m},
\end{align*}
which implies the claim. 
In particular, $
\limsup_{r\to0}r^{-1+\varsigma_m}\zeta(r,m)\leq C$.
Moreover, since $\varsigma_m\in(0,1)$, we have 
\[
\lim_{m\to\infty}\zeta(r,m)\leq Cr\quad\text{with $C$ being independent of $m$.}
\]    
\end{remark}

The strategy of bounding the Hausdorff dimension of $\Gamma_{p_m}(t)$ is motivated by \cite{kim2019singular} while there are notable differences as discussed in the introduction. The major tool is the inf- and sup-convolution technique.
Suppose $\rho\in C^\infty(\bbR^d\times(0,T))$ and let $r = r(t)\in C^\infty((0,T))$ satisfying $0<r\leq 1$. Define
\begin{equation*}
\rho_1(x,t):=\sup_{y\in B(x,r(t))}
\rho(y,t),\qquad 
\rho_2(x,t):=\inf_{y\in B(x,r(t))}
\rho(y,t).
\end{equation*}
Then $\rho_1$ and $\rho_2 $ are Lipschitz continuous. 
They are called the sup- and inf-convolution of the smooth function $\rho$, respectively.

Let us mention some basic properties of the sup-convolution of smooth functions.
Let $y_{1,t}(\cdot)\in \overline{B(\cdot,r(t))}$ be such that $\rho_1(\cdot,t)=\rho(y_{1,t}(\cdot),t)$.
Then we have the following:
\beq\lb{6.1}
(\Delta \rho_1)(x,t)\geq (\Delta\rho)(y_{1,t}(x),t),\quad (\nabla \rho_1)(x,t)=(\nabla\rho)(y_{1,t}(x),t)
\eeq
and
\beq\lb{6.2}
(\partial_t \rho_1)(x,t)=(\partial_t \rho)(y_{1,t}(x),t)+r'(t)|\nabla \rho|(y_{1,t}(x),t).
\eeq
The first inequality in \eqref{6.1} is understood in the sense of distribution. 
The proof can be found in \cite{cbook,kim2019singular,kimzhang21}.
Similarly, assuming $y_{2,t}(\cdot)\in\overline{B(\cdot,r(t))}$ to satisfy that $\rho_2(\cdot,t)=\rho(y_{2,t}(\cdot),t)$, then
\[
(\Delta \rho_2)(x,t)\leq (\Delta\rho)(y_{2,t}(x),t),\quad (\nabla \rho_2)(x,t)=(\nabla\rho)(y_{2,t}(x),t),
\]
\[
(\partial_t \rho_2)(x,t)=(\partial_t \rho)(y_{2,t}(x),t)-r'(t)|\nabla \rho|(y_{2,t}(x),t).
\]

Let $\varrho=\varrho_m$ be a solution in $Q_T$ to \eqref{main} with $m\geq 2$. We are first going to show that a modified version of the  sup- (resp.\;inf-) convolution of $\varrho$ is a subsolution (resp.\;a supersolution) to \eqref{main}. 
By Lemma \ref{approximation}, one only needs to prove this for smooth $\varrho$.

\begin{lemma}\label{L.6.1}
Assume \eqref{1.7}, \eqref{1.8}, and {\rm\bf (H2)}.
Let $\varrho=\varrho_m$ be a solution in $Q_T$ to \eqref{main} with $m\geq 2$.
Then there exist constants $L,C\geq 1$ and $\tau_0>0$ depending only on the universal constants and $\tilde{\sigma}$ such that, for all $r_0>0$ sufficiently small and $\alpha:=Cr_0<\frac12$, if $r(t):=r_0e^{-Lt}$ and
\begin{equation}
\begin{split}
u_1(x,t):=&\; (1-\alpha)^{\frac{1}{m-1}}\sup_{y\in B(x,r(t))}\varrho(y,(1-\alpha)t),\\
u_2(x,t):=&\;(1+\alpha)^{\frac{1}{m-1}}\inf_{y\in B(x,r(t))}\varrho(y,(1+\alpha)t),
\end{split}
\label{eqn: modified sup and inf convolution}
\end{equation}
then $u_1$ is a subsolution to \eqref{main} and $u_2$ is a supsolution to \eqref{main} for $t\in (0,\tau_0)$.
\end{lemma}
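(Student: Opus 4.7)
The plan is to reduce to smooth, strictly positive solutions via Lemma \ref{approximation}, and then check the sub-/super-solution inequalities pointwise using the first- and second-order identities \eqref{6.1}--\eqref{6.2} for sup- and inf-convolutions. The prefactors $(1\mp\alpha)^{1/(m-1)}$ in \eqref{eqn: modified sup and inf convolution} are inserted exactly so that $u_i^m = (1\mp\alpha)^{m/(m-1)}(\sup/\inf\varrho)^m$, making the nonlinear diffusion contribution $\Delta u_i^m$ match what the PME produces at the extremizer; this uses the algebraic identity $(1-\alpha)\cdot(1-\alpha)^{1/(m-1)} = (1-\alpha)^{m/(m-1)}$ and its $+\alpha$ analogue.

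Concretely, for $u_1$, set $s = (1-\alpha)t$, let $y = y_{1,t}(x)$ achieve the supremum, and write $v = \sup_{B(\cdot,r(t))}\varrho(\cdot,s)$. Since $\varrho \geq 0$, $v^m$ is the sup-convolution of $\varrho^m$, so \eqref{6.1} gives $\Delta u_1^m \geq (1-\alpha)^{m/(m-1)}(\Delta \varrho^m)(y,s)$. Differentiating in $t$ with \eqref{6.2} and substituting the PME for $\pa_s\varrho$,
\begin{align*}
\pa_t u_1 = (1-\alpha)^{1/(m-1)}\Bigl\{(1-\alpha)\bigl[\Delta \varrho^m + \na\cdot(\varrho b) + \varrho f\bigr](y,s) + r'(t)|\na\varrho|(y,s)\Bigr\}.
\end{align*}
The $\Delta\varrho^m$ contributions cancel, and after dividing by $(1-\alpha)^{1/(m-1)}$ the subsolution inequality reduces to nonpositivity of
\begin{align*}
&\na\varrho(y,s)\cdot\bigl[(1-\alpha)b(y,s) - b(x,t)\bigr] + r'(t)|\na\varrho|(y,s)\\
&\quad + \varrho(y,s)\Bigl\{(1-\alpha)\bigl[\na\cdot b + f\bigr](y,s) - \bigl[\na\cdot b(x,t) + f(x,t,(1-\alpha)p(y,s))\bigr]\Bigr\}.
\end{align*}

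The gradient line is controlled by choosing $L$ large: the $C^1$ bounds on $b$ give $|(1-\alpha)b(y,s) - b(x,t)| \leq C(\alpha(1+t) + r(t))$, while $r'(t)|\na\varrho| = -Lr(t)|\na\varrho|$, and since $\alpha/r(t) = Ce^{Lt}$, a universal $L$ together with a small $\tau_0$ makes the gradient line $\leq 0$. For the $\varrho$ line I split it as $-\alpha\varrho\bigl[\na\cdot b(y,s) + f(y,s,p(y,s))\bigr]$, plus the shift errors $\na\cdot b(y,s) - \na\cdot b(x,t)$ and $f(y,s,(1-\alpha)p) - f(x,t,(1-\alpha)p)$ of size $O(r(t)+\alpha t)$, plus the monotonicity piece $f(y,s,p) - f(y,s,(1-\alpha)p)$, which is $\leq 0$ thanks to $f_p \leq 0$ in \textbf{(H2)}. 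The explicit $-\alpha\varrho\bigl[\na\cdot b + f\bigr] \leq -\alpha\tilde\sigma\varrho$ from \textbf{(H2)} then dominates the shift errors provided $\alpha\tilde\sigma \geq C(r(t) + \alpha t)$, which is achieved by setting $\alpha = Cr_0$ with $C$ depending on $\tilde\sigma$ and taking $\tau_0$ small relative to $\tilde\sigma$.

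The argument for $u_2$ is symmetric: $r'(t)|\na\varrho|$ now enters with the opposite sign, $(1+\alpha)$ produces a favorable gain $+\alpha[\na\cdot b + f] \geq \alpha\tilde\sigma$, and $(1+\alpha)p \geq p$ combined with $f_p \leq 0$ gives the right sign for the monotonicity piece. The main technical obstacle --- and the reason \textbf{(H2)} strengthens \eqref{cond} --- is producing this unsigned $O(\alpha\tilde\sigma)$ gain to absorb the first-order $(x,t)\mapsto(y,s)$ shift errors in both the drift divergence and the source term; without a pointwise positive lower bound on $\na\cdot b + f$, the PME contribution at the extremizer does not beat these errors on its own.
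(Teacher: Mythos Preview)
Your proposal is correct and follows essentially the same approach as the paper: reduce to smooth solutions, use the sup-/inf-convolution identities \eqref{6.1}--\eqref{6.2}, cancel the $\Delta\varrho^m$ term via the prefactor $(1\mp\alpha)^{1/(m-1)}$, absorb the gradient errors with the $-Lr(t)|\na\varrho|$ term, and absorb the zeroth-order $(x,t)\to(y,s)$ shift errors with the $-\alpha\tilde\sigma\varrho$ gain coming from \textbf{(H2)} together with $f_p\leq 0$. The only difference is organizational --- you factor out $(1-\alpha)^{1/(m-1)}$ immediately and split into a ``gradient line'' and a ``$\varrho$ line'', whereas the paper computes the operator $\calG(u)$ term by term and extracts the $-\frac12\alpha\tilde\sigma\varrho$ gain from the difference $(1-\alpha)^{1/(m-1)}-(1-\alpha)^{m/(m-1)}\in[\tfrac12\alpha,\alpha]$ --- but the choice of constants $\alpha=Cr_0/\tilde\sigma$, $L$ universal, and $\tau_0\lesssim \min\{1/L,\tilde\sigma\}$ is the same.
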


\begin{proof}
We will only show that $u_1$ is a subsolution, and the proof for $u_2$ being a supersolution is similar. 
Below we write $u=u_1$ and $y_t=y_{1,t}$.
Let $\calG$ denote the operator in \eqref{main}, i.e.,
\[
\calG(\rho):=\partial_t\rho -\Delta\rho^m-\nabla\cdot (\rho\, { b }(x,t))-\rho f(x,t,P_m(\rho))
\]
and the goal is to show that $\calG(u)\leq 0$ in $\bbR^d\times(0,\tau_0)$. Thanks to Lemma \ref{approximation}, it suffices to prove this with $u$ being Lipschitz continuous. We will only give a formal proof.

Below we write $\varrho$ and its derivatives as those evaluated at $(y_t(x),(1-\alpha)t)$, and $r=r(t)$. 
Let us estimate each term in $\calG (u)$. 
First, by \eqref{6.2}, we have that
\begin{equation}\label{partial}
\begin{aligned}
    \partial_t u &= (1-\alpha)^\frac{m}{m-1}(\partial_t\varrho)+(1-\alpha)^\frac{1}{m-1}r'(t)|\nabla \varrho|\\
    &= (1-\alpha)^\frac{m}{m-1}(\partial_t\varrho)-(1-\alpha)^\frac{1}{m-1}Lr|\nabla \varrho|.
\end{aligned}
\end{equation}
It follows from \eqref{6.1} that
\[
-\Delta u^m\leq -(1-\alpha)^{\frac{m}{m-1}}\Delta \varrho^m\quad (\text{in distribution}),
\]
and $\nabla u=(1-\alpha)^\frac{1}{m-1}\nabla \varrho$.
Also using the regularity assumption on ${b}$ and $|y_t-x|\leq r$, we have
\begin{equation}
    \label{6.3}
\begin{aligned}
    -\nabla (u b)(x,t)& 
 =-(1-\alpha)^\frac{1}{m-1}(\nabla \varrho \cdot b(x,t)+\varrho\nabla\cdot b(x,t))\\
    &\leq -(1-\alpha)^\frac{1}{m-1}(\nabla \varrho \cdot b(y_t,(1-\alpha)t)+\varrho(\nabla\cdot b)(y_t,(1-\alpha)t))+C(|\nabla\varrho|+\varrho)(r+\alpha t).
\end{aligned}
\end{equation}
Using the regularity of $f$ and that $f_p \leq 0$, direct computation yields
\begin{equation}
    \label{6.4}
    \begin{aligned}
-uf(x,t,P_m(u))   &=-(1-\alpha)^\frac{1}{m-1}\varrho f(x,t,(1-\alpha) P_m(\varrho)) \\
   &\leq -(1-\alpha)^\frac{1}{m-1}\varrho f(y_t,(1-\alpha)t,(1-\alpha)P_m(\varrho))+ C\varrho (r+\alpha t)\\
      &\leq -(1-\alpha)^\frac{1}{m-1}\varrho f(y_t,(1-\alpha)t, P_m(\varrho))+ C\varrho (r+\alpha t).   
    \end{aligned}
\end{equation}
Note that $\nabla \cdot b+ f\geq \tilde{\sigma}>0$ and, since $\alpha\in (0,\frac12)$ and $m\geq 2$, $(1-\alpha)^\frac{1}{m-1}-(1-\alpha)^\frac{m}{m-1}\in [\frac12\alpha,\alpha]$ for all $m\geq 2$.
Therefore, \eqref{6.3} and \eqref{6.4} yield that
\[
\begin{aligned}
&\;-\nabla (u b)(x,t)-uf(x,t,P_m(u))\\
\leq &\;-(1-\alpha)^\frac{1}{m-1}\nabla \cdot (\varrho  b)(y_t,(1-\alpha)t)-(1-\alpha)^\frac{1}{m-1}\varrho f(y_t,(1-\alpha)t, P_m(\varrho))+C(|\nabla\varrho|+\varrho)(r+\alpha t)\\
\leq &\; -(1-\alpha)^\frac{m}{m-1}(\nabla \cdot (\varrho  b) +\varrho f)-\big((1-\alpha)^\frac{1}{m-1}-(1-\alpha)^\frac{m}{m-1}\big)(\varrho\nabla \cdot b +\varrho f+\nabla \varrho\cdot  b)\\
&\;+C(|\nabla\varrho|+\varrho)(r+\alpha t)\\
\leq &\; -(1-\alpha)^\frac{m}{m-1}\left(\nabla\cdot ( \varrho b)+\varrho f\right) -\frac12\alpha\tilde{\sigma}\varrho +\alpha\|b\|_\infty |\nabla \varrho|+C(|\nabla\varrho|+\varrho)(r+\alpha t) .
\end{aligned}
\]
%
Combining this with \eqref{partial} and using \eqref{main}, $\alpha\in (0,\frac12)$, and $m\geq 2$, we obtain that, for some universal $C_1\geq 1$,
\begin{align*}
{\calG}(u)&\leq  (1-\alpha)^\frac{m}{m-1}\calG(\varrho)(y_t,(1-\alpha)t)-(1-\alpha)^\frac{1}{m-1}Lr|\nabla \varrho|-\frac12\alpha\tilde{\sigma}\varrho+C(|\nabla\varrho|+\varrho)(r+\alpha t)+C\alpha|\nabla\varrho|\\
& \leq  -\frac12 Lr|\nabla \varrho| -\frac12 \alpha\tilde{\sigma}\varrho+C_1(|\nabla\varrho|+\varrho)(r+\alpha t)+C_1\alpha|\nabla\varrho|.
\end{align*}
Now choose 
\[
\alpha:=\frac{4C_1r_0}{\tilde{\sigma}},\quad 
L:=4C_1+\frac{8eC_1^2}{\tilde{\sigma}},\quad \tau_0:=\min\left\{\frac{1}L,\,\frac{\tilde{\sigma}}{4eC_1},\,\frac{T}{2}\right\}.
\]
By requiring $r_0$ to be sufficiently small, we can make $\alpha<\frac12$.
Then $\alpha t\leq r_0e^{-1}\leq r$ for $t\in (0,\tau_0)$, and $Lr\geq 4C_1r+ 2C_1\alpha$. 
Therefore, we obtain that
$\calG(u)\leq 0$ for all $(x,t)\in\bbR^d\times (0,\tau_0)$.
\end{proof}

In the next lemma, we further assume \textbf{(H1)} and \textbf{(H3)}. 
We will apply the inf- and sup-convolution construction to show that the property \eqref{6.0} propagates to all finite times.

\begin{lemma}\lb{L.6.2}
Assume \eqref{1.7}, \eqref{1.8}, and {\rm\bf (H1)--(H3)}. 
Suppose that $\varrho_m$ is a continuous solution to \eqref{main} in $Q_T$.
Then there exist universal $\tilde{r}_0>0$ and $C>0$ such that, for all $r\in (0,\tilde{r}_0)$ and $m\geq 2$, we have
\beq\lb{6.9}
\sup_{ t\in [0,T) }\int_{\bbR^d}\left(\sup_{y\in B(0,r)}\varrho_m(x+y,t)-\inf_{y\in B(0,r)}\varrho_m(x+y,t)\right) dx
\leq C \big(r+\zeta(Cr,m)\big).
\eeq
\end{lemma}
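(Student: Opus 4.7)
The plan is to propagate the $L^1$ modulus of continuity of $\varrho_m$ in space by confronting the modified sup- and inf-convolutions from Lemma~\ref{L.6.1} with $\varrho_m$ itself through \textbf{(H1)}. Writing $\bar\varrho_m^{(r)}(x,t):=\sup_{y\in B(x,r)}\varrho_m(y,t)$, $\underline\varrho_m^{(r)}(x,t):=\inf_{y\in B(x,r)}\varrho_m(y,t)$, and $\omega_m(r,t):=\int_{\bbR^d}(\bar\varrho_m^{(r)}-\underline\varrho_m^{(r)})(x,t)\,dx$, we shall prove $\omega_m(r,t)\leq C(r+\zeta(Cr,m))$ uniformly in $t\in[0,T)$, starting from $\omega_m(r_0,0)\leq \zeta(r_0,m)$ given by \textbf{(H3)}.

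Fix $t\in[0,\tau_0/2]$. Choose $r_0^{(1)}:=r e^{Lt/(1-\alpha)}$ and $r_0^{(2)}:=r e^{Lt/(1+\alpha)}$ and the corresponding $\alpha = Cr_0^{(i)}\ll 1$; both are comparable to $r$. Applying Lemma~\ref{L.6.1} with initial radius $r_0^{(1)}$ and $r_0^{(2)}$ produces the subsolution $U_1$ and supersolution $U_2$ of \eqref{main} as in \eqref{eqn: modified sup and inf convolution}. The precise point is that at $s_1:=t/(1-\alpha)\leq \tau_0$ one has $r_0^{(1)}e^{-Ls_1}=r$, hence $U_1(\cdot,s_1)=(1-\alpha)^{1/(m-1)}\bar\varrho_m^{(r)}(\cdot,t)$; symmetrically, at $s_2:=t/(1+\alpha)$ one has $U_2(\cdot,s_2)=(1+\alpha)^{1/(m-1)}\underline\varrho_m^{(r)}(\cdot,t)$. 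Letting $W_i$ denote the weak solution of \eqref{main} with initial data $U_i(\cdot,0)$, the comparison principle gives $U_1\leq W_1$ and $U_2\geq W_2$ on $(0,\tau_0)$, so \textbf{(H1)} yields
\[
\int U_1(\cdot,s_1)\,dx\leq \int \varrho_m(\cdot,s_1)\,dx+C\!\int|U_1(\cdot,0)-\varrho_m(\cdot,0)|\,dx,
\]
and similarly $\int U_2(\cdot,s_2)\,dx\geq \int \varrho_m(\cdot,s_2)\,dx-C\int|U_2(\cdot,0)-\varrho_m(\cdot,0)|\,dx$. The elementary inequality $1-(1-\alpha)^{1/(m-1)}\leq \alpha$ (for $m\geq 2$) together with $\varrho_m\leq \bar\varrho_m^{(r_0)}$ gives the pointwise bound $|U_1(\cdot,0)-\varrho_m(\cdot,0)|\leq (\bar\varrho_m^{(r_0^{(1)})}-\varrho_m)(\cdot,0)+\alpha\varrho_m(\cdot,0)$, and an analogous bound for $U_2$; integrating and using \textbf{(H3)} with $\underline\varrho_m^{(r_0)}\leq\varrho_m\leq\bar\varrho_m^{(r_0)}$ and $\|\varrho_m(\cdot,0)\|_{L^1}\leq C$, we obtain $\int|U_i(\cdot,0)-\varrho_m(\cdot,0)|\,dx\leq Cr+\zeta(Cr,m)$.

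Dividing by $(1\mp\alpha)^{1/(m-1)}\in[1-\alpha,\,1+C\alpha]$, using the uniform $L^\infty$-in-$t$ mass bound on $\varrho_m$ together with the Lipschitz-in-time mass estimate from \textbf{(H1)} to absorb the small time shifts $|s_i-t|\leq C\alpha t\leq Cr$, and collecting terms yields
\[
\int \bar\varrho_m^{(r)}(\cdot,t)\,dx\leq \int \varrho_m(\cdot,t)\,dx+C(r+\zeta(Cr,m)),
\]
\[
\int \underline\varrho_m^{(r)}(\cdot,t)\,dx\geq \int \varrho_m(\cdot,t)\,dx-C(r+\zeta(Cr,m)).
\]
Subtracting these two (noting $\bar\varrho_m^{(r)}\geq \underline\varrho_m^{(r)}$) gives $\omega_m(r,t)\leq C(r+\zeta(Cr,m))$ on $[0,\tau_0/2]$. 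To extend to $[0,T)$, the whole scheme is iterated by restarting at times $k\tau_0/2$, $k=1,\dots,\lceil 2T/\tau_0\rceil$, using the bound just proved at the preceding step as a substitute for \textbf{(H3)}; since $\zeta(\cdot,m)$ can be replaced by its monotone envelope in $r$, each iteration multiplies the constant by a universal factor while enlarging the first argument of $\zeta$ by a universal factor, so after $O(T/\tau_0)$ steps the bound is still of the claimed form $C(r+\zeta(Cr,m))$. The main technical hurdle is the careful bookkeeping in combining the rescaled-in-time convolutions $U_i(\cdot,s_i)$ with the prefactor $(1\mp\alpha)^{1/(m-1)}$ uniformly in $m\geq 2$, which is precisely what the inequality $(1-\alpha)^{1/(m-1)}\geq 1-\alpha$ and its counterpart make work.
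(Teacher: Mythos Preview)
Your proof is correct and follows essentially the same approach as the paper's: use the modified sup-/inf-convolutions of Lemma~\ref{L.6.1} as barriers, compare with the actual solutions launched from their initial data, invoke the $L^1$-stability and time-Lipschitz mass bound of \textbf{(H1)} to absorb the $(1\mp\alpha)^{1/(m-1)}$ prefactors and the time shifts, and then iterate over intervals of length $\tau_0/2$. The only cosmetic difference is that the paper fixes a single $r_0$ and obtains the oscillation bound at the $t$-dependent radius $r(t)=r_0e^{-Lt}$ (then recovers a fixed $r$ by monotonicity, taking $\tilde r_0=r_0e^{-LT}$), whereas you pick $r_0^{(i)}$ depending on $t$ so that the radius hits exactly $r$ at the evaluation times $s_i$; your choice makes $r_0^{(i)}$ implicitly defined through $\alpha=Cr_0^{(i)}$, but this is harmless since it is a contraction near $re^{Lt}$.
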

\begin{proof}
Let $r_0>0$ be sufficiently small from Lemma \ref{L.6.1}, and define $\alpha=4C_1r_0/\tilde{\sigma}$ and $r(t)=r_0e^{-Lt}$ as before.
Let $u_1$ and $u_2$ be defined as in \eqref{eqn: modified sup and inf convolution}.
We have shown that, for some $\tau_0>0$, $u_1$ is a subsolution to \eqref{main} in $\bbR^d\times (0,\tau_0)$, while $u_2$ is a supersolution to \eqref{main} in $\bbR^d\times (0,\tau_0)$.

Let $\rho_1$ and $\rho_2$ be solutions to \eqref{main} with initial data $u_1(\cdot,0)$ and $u_2(\cdot,0)$, respectively. 
By the comparison principle,
\beq\lb{6.8}
u_1\leq \rho_1\quad\text{and}\quad\rho_2\leq u_2\quad\text{in $\bbR^d\times (0,\tau_0)$.}
\eeq
Thanks to \textbf{(H3)} and the compact support of $\varrho(\cdot,0)$,
\[
\int_{\bbR^d} |\rho_1-\varrho|(x,0)\,dx
+
\int_{\bbR^d} |\rho_2-\varrho|(x,0)\,dx
\leq \zeta(r_0,m)+C\big(1-(1-\alpha)^{\frac1{m-1}}\big).
\]
By the $L^1$-stability of solutions in \textbf{(H1)}, we get for all $t\in [0,T)$,
\[
\int_{\bbR^d} \rho_1(x,t)-\varrho(x,t)\,dx\leq C\zeta(r_0,m) + C\alpha\quad\text{and}\quad \int_{\bbR^d} \varrho(x,t)-\rho_2(x,t)\, dx\leq C\zeta(r_0,m) + C\alpha.
\]
Since the $L^1$-norm of the solutions is Lipschitz in time by \textbf{(H1)}, we get for $t\in (0,T/2)$,
\beq\lb{7.100}
\begin{aligned}
&\;\int_{\bbR^d} \rho_1\left(x,\frac{t}{1-\alpha}\right)-\rho_2\left(x,\frac{t}{1+\alpha}\right) dx \\
\leq &\; C\zeta(r_0,m) + C\alpha+\int_{\bbR^d} \varrho\left(x,\frac{t}{1-\alpha}\right)-\varrho\left(x,\frac{t}{1+\alpha}\right) dx\\
\leq &\;C\zeta(r_0,m) + Cr_0.  
\end{aligned}
\eeq
In the last line, we used the fact $\alpha\leq Cr_0$.
Then \eqref{eqn: modified sup and inf convolution}, \eqref{6.8} and \eqref{7.100} imply that, for $t\in [0,\tau_0/2)$, 
\begin{align*}
&\;\int_{\bbR^d}\sup_{y\in B(0,r(t))}\varrho(x+y,t) -\inf_{y\in B(0,r(t))}\varrho(x+y,t)\, dx\\
=&\; \int_{\bbR^d}(1-\alpha)^{-\frac{1}{m-1}}u_1\left(x,\frac{t}{1-\alpha}\right)-(1+\alpha)^{-\frac{1}{m-1}}u_2\left(x,\frac{t}{1+\alpha}\right) dx\\
\leq&\; \int_{\bbR^d}\rho_1\left(x,\frac{t}{1-\alpha}\right)-\rho_2\left(x,\frac{t}{1+\alpha}\right) dx+C\alpha\\
\leq &\; C\zeta(r_0,m)+C r_0.    
\end{align*}
By iteration, there exists $C>0$ such that for all $t\in [0,T)$ we have
\[
\int_{\bbR^d}\sup_{y\in B(0,r(t))}\varrho(x+y,t)-\inf_{y\in B(0,r(t))}\varrho(x+y,t)\, dx\leq C \big(r_0+\zeta(r_0,m)\big).
\]
Recall that $r(t)=r_0e^{-Lt}$.
We then take $\tilde{r}_0 = r(T) = r_0 e^{-LT}$ and obtain the desired claim.
\end{proof}

Now we are ready to prove the main result on the Hausdorff dimensions of the free boundaries. 

\begin{theorem}
\label{thm: estimate Hausdorff dim of FB}
Suppose that for some $\eta_0\in [0,T)$, there exists $c_*=c_*(\eta_0,T)>0$, $r_* = r_*(\eta_0,T)>0$, and $\mu\in (0,2)$ such that, for all $r\in (0,r_*)$ and $m\geq 2$,
\beq\lb{6.5}
\mint_{B(x_0,r)}p_m(x,t_0)\, dx\geq c_* r^\mu\quad\text{for any $t_0\in[\eta_0,T)$ and }x_0\in \Gamma_{p_m}(t_0).
\eeq
Assume \eqref{1.7}, \eqref{1.8}, and {\rm\bf (H1)--(H3)}.
Then there exists $C>0$ independent of $m\geq 2$ such that
\[
\calH^{d_m}(\Gamma_{p_m}(t))\leq CC_m\quad\text{for all }t\in [\eta_0,T),
\]
where $d_m:=d-\sigma_m+\frac{\mu}{m-1}$, and $C_m$ and $\sigma_m$ are from {\rm\bf(H3)}. 

Furthermore, if there exists $C$ independent of $m$ such that for each sufficiently small $r$ we have
\beq\lb{6.13}
\liminf_{m\to\infty}\zeta(r,m)\leq Cr,
\eeq
and the conclusion of Theorem \ref{T.main} holds, then $\tilde{\Gamma}_{p_\infty}(t)$ has finite $(d-1)$-dimensional Hausdorff measure for any $t\in [\eta_0,T)$, where $\tilde{\Gamma}_{p_\infty}(t)$ is defined in \eqref{6.112}.
\end{theorem}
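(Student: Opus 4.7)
The argument combines the $L^1$-oscillation bound of Lemma~\ref{L.6.2}, the weak non-degeneracy \eqref{6.5}, and a Vitali covering count. The bridge from pressure to density is as follows: at any $x_0 \in \Gamma_{p_m}(t_0)$ one has $\varrho_m(x_0,t_0) = 0$, while \eqref{6.5} and $p_m = \tfrac{m}{m-1}\varrho_m^{m-1}$ give
\[
c_*(r/2)^\mu \leq \mint_{B(x_0,r/2)} p_m(x,t_0)\,dx \leq \tfrac{m}{m-1}\bigl(\sup_{B(x_0,r/2)}\varrho_m(\cdot,t_0)\bigr)^{m-1}.
\]
Since $m/(m-1) \leq 2$ for $m \geq 2$, this yields $\sup_{B(x_0,r/2)}\varrho_m(\cdot,t_0) \geq c\, r^{\mu/(m-1)}$ for a constant $c > 0$ depending only on $c_*$ and $\mu$. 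For every $x \in B(x_0, r/2)$ the ball $B(x,r)$ contains both $x_0$ and a near-maximizer, so the oscillation of $\varrho_m(\cdot,t_0)$ on $B(x,r)$ is at least $c\, r^{\mu/(m-1)}$.

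Fix $t_0 \in [\eta_0, T)$ and small $r$, and select a maximal disjoint family $\{B(x_i, r)\}_{i=1}^N$ with $x_i \in \Gamma_{p_m}(t_0)$; maximality forces $\Gamma_{p_m}(t_0) \subseteq \bigcup_i B(x_i, 2r)$. As $\{B(x_i, r/2)\}$ are disjoint, integrating the pointwise oscillation bound and applying Lemma~\ref{L.6.2} gives
\[
c N r^{d + \mu/(m-1)} \leq \int_{\bbR^d}\Bigl[\sup_{y \in B(x,r)}\varrho_m(y,t_0) - \inf_{y \in B(x,r)}\varrho_m(y,t_0)\Bigr]\,dx \leq C\bigl(r + \zeta(Cr, m)\bigr).
\]
By \textbf{(H3)}, for $r$ small $\zeta(Cr,m) \leq C C_m r^{\sigma_m}$ with $\sigma_m \leq 1$, so the right side is $\leq C' C_m r^{\sigma_m}$ and $N \leq C C_m r^{-d_m}$ with $d_m = d - \sigma_m + \mu/(m-1)$. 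The cover by $B(x_i, 2r)$ then yields $\calH^{d_m}_{4r}(\Gamma_{p_m}(t_0)) \leq N(4r)^{d_m} \leq C C_m$, and $r \to 0^+$ gives the first assertion.

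For the limiting statement, fix $t_0 \in [\eta_0, T)$. By Theorem~\ref{T.main}(2), for every small $r > 0$ there is $M(r)$ such that whenever $m \geq M(r)$, $\tilde\Gamma_{p_\infty}(t_0)$ lies in the $Cr$-neighborhood of $\Gamma_{p_m}(t_0 - r^2)$. Running the Vitali count above at time $t_0 - r^2$ produces a cover of $\Gamma_{p_m}(t_0 - r^2)$ by $N$ balls of radius $2r$ with $c N r^{d + \mu/(m-1)} \leq C(r + \zeta(Cr, m))$; dilating each by $Cr$ gives a cover of $\tilde\Gamma_{p_\infty}(t_0)$ by $N$ balls of radius $C'r$, whence
\[
\calH^{d-1}_{C'r}(\tilde\Gamma_{p_\infty}(t_0)) \leq N(C'r)^{d-1} \leq C r^{-\mu/(m-1)}\bigl(1 + r^{-1}\zeta(Cr, m)\bigr).
\]
Assumption \eqref{6.13} lets us, for each fixed $r$, choose $m = m(r) \geq M(r)$ so large that simultaneously $r^{-\mu/(m-1)} \leq 2$ and $r^{-1}\zeta(Cr, m) \leq C''$. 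The right side is then uniformly bounded in $r$, and $r \to 0^+$ gives $\calH^{d-1}(\tilde\Gamma_{p_\infty}(t_0)) < \infty$.

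The principal subtlety is the joint $m$-dependence of the density-degeneracy factor $r^{\mu/(m-1)}$ and of the initial-data control $\zeta(Cr, m)$. For fixed $m$ they produce the excess $\mu/(m-1)$ in $d_m$, but in the limit $r \to 0^+$ they would, respectively, blow up and (potentially) fail to be $O(r)$. Taming both requires a diagonal choice $m = m(r) \to \infty$ tuned to $r$, which is precisely what the uniform-in-$m$ control \eqref{6.13} makes possible; without it, one only obtains the $m$-dependent bound $\calH^{d_m}$ for finite $m$.
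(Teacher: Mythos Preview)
Your proof is correct and follows essentially the same strategy as the paper: a Vitali covering of $\Gamma_{p_m}(t_0)$, the nondegeneracy \eqref{6.5} to produce a lower bound $\sup_{B(x_i,r/2)}\varrho_m \geq c\,r^{\mu/(m-1)}$ near each boundary center, and Lemma~\ref{L.6.2} to bound the integrated oscillation from above. The only cosmetic difference is in the limiting step, where you cover $\Gamma_{p_m}(t_0-r^2)$ first and dilate to cover $\tilde\Gamma_{p_\infty}(t_0)$, whereas the paper covers $\tilde\Gamma_{p_\infty}(t_0)$ directly and then shifts the ball centers onto $\Gamma_{p_m}(t')$; both variants give the same count $N\leq C(r+\zeta(Cr,m))r^{-d-\mu/(m-1)}$ and the same diagonal choice $m=m(r)\to\infty$ via \eqref{6.13}.
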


Let us remark that the assumption \eqref{6.5} is proved in Proposition \ref{L.5.3} under suitable conditions. 
Also, if $\lim_{m\to\infty}\sigma_m= 1$, then the Hausdorff dimension of the free boundary $\Gamma_{p_m}(t)$ decreases to $d-1$ as $m\to\infty$. 
This is the case for the two typical scenarios discussed in Remark \ref{R3}.


\begin{proof}
Take an arbitrary $m\geq 2$.
Fix $t\in [\eta_0,T)$,  
and take $R\in (0,r_*)$ to be chosen.
Let $\calO$ be the collection of all closed balls of radius $R$ with their centers lying in $\Gamma_{p_m}(t_0)$. 
It follows from the Vitali's covering lemma that there is a family of disjoint balls $\calB:=\{B^i\}\subset \calO$, which is at most finite (cf.\;Lemma \ref{uniformb}), such that $\{3B^i\}$ covers the boundary $\Gamma_{p_m}(t_0)$.
Here $3B^i$ denotes the ball having the same center as $B^i$ and yet with the radius tripled.
It suffices to find an upper bound for the cardinality of $\calB$, denoted by $\|\calB\|$.

Define
\[
\bar\rho(x):=\sup_{y\in B(x,R)}\varrho_m(y,t_0),
\qquad 
\un\rho(x):=\inf_{y\in B(x,R)}\varrho_m(y,t_0).
\]
Writing 
$
\overline\Omega :=\{x:\, \bar\rho(x)>0\}$ and $\un\Omega :=\{x:\, \un\rho(x)>0\}$,
it is easy to see that
\beq\lb{6.11}
\un\Omega  \subseteq \overline\Omega \quad\text{and}\quad B^i\subseteq B_R(\Gamma_{\varrho_m}(t_0))\subseteq \overline\Omega \setminus  \un\Omega  =:\calN.
\eeq

Suppose that $y_1\in\Gamma_{p_m}(t_0)$ is the center of $B^1$. 
By \eqref{6.5}, 
\[
\mint_{B(y_1,R/2)}p_m(x,t_0)\, dx\geq c_*2^{-\mu} R^{\mu}.
\]
Hence, there exists at least one point $z\in B(y_1,R/2)$ such that 
\[
{\varrho_m}(z,t_0)=\left(\frac{m-1}{m}p_m(z,t_0)\right)^\frac{1}{m-1}\geq c R^{\mu/(m-1)}
\]
for some $c>0$ depending only on $c_*$. Notice that 
\[
z\in  B(y_1,R/2)\subseteq B(x,R)\quad\text{for any }x\in B(y_1,R/2).
\]
Thus, by the sup-convolution construction, for any $x\in B(y_1,R/2)$, we have
\[
\bar\rho(x)\geq {\varrho_m}(z,t_0)\geq cR^{\mu/(m-1)}.
\]
This implies that 
\[
\int_{B^1}\bar\rho(x)\, dx\geq \int_{B(y_1,R/2)}\bar\rho(x)\, dx\geq c |B^1| R^{\mu/(m-1)}
\]
for some $c$ depending only on $c_*$. This, together with \eqref{6.11}, yields that
\beq\lb{6.10}
\int_\calN 
\bar\rho(x)\, dx\geq \sum_i\int_{B^i}\bar\rho(x)\, dx\geq c\|\calB\| R^{d+\mu/(m-1)}.
\eeq
 
We further assume $R$ to be smaller than $\tilde{r}_0$ from Lemma \ref{L.6.2}.
Observe that $\bar\rho(x)\geq\un\rho(x)$ and $\un\rho(x)=0$ in $\calN$ by \eqref{6.11}. Therefore, by \eqref{6.9} with $r$ replaced by $R$, we get
\begin{align*}
\int_\calN \bar\rho(x)\, dx& \leq     \int_{\bbR^d} \bar\rho(x)-\un\rho(x)\, dx=\int_{\bbR^d}\sup_{y\in B(0,R)}\varrho_m(x+y,t_0)-\inf_{y\in B(0,R)}\varrho_m(x+y,t_0)\, dx \\
&\leq C\big(R+\zeta(CR,m)\big)\leq CC_m R^{\sigma_m}.
\end{align*}
Combining this with \eqref{6.10}, we obtain that
\beq\lb{6.12}
\|\calB\|\leq CC_m R^{\sigma_m-d-\mu/(m-1)}
\eeq
with $C$ being independent of $m\geq 2$, $t_0\in [\eta_0,T)$ and all $R$ sufficiently small.
This implies that the Hausdorff dimension of $\Gamma_{p_m}(t_0)$ is at most $d_m:=d-\sigma_m+\frac{\mu}{m-1}$ and
\[
\calH^{d_m}(\Gamma_{p_m}(t_0))\leq CC_m
\]
with $C>0$ independent of $m\geq 2$ and $t_0\in [\eta_0,T)$.

\medskip

Finally, we use \eqref{6.13} and the convergence of free boundaries in the space-time Hausdorff distance to conclude that $\tilde{\Gamma}_{p_\infty}(t)$ has finite $(d-1)$-dimensional Hausdorff measure for $t\in [\eta_0,T)$.
Indeed, let $\calO_\infty$ be the collection of all closed balls centered at $\tilde{\Gamma}_{p_\infty}(t)$ with radius $R>0$. 
As before, there is a finite family of disjoint balls $\calB_\infty:=\{B^i_\infty\}\subset \calO_\infty$ such that $\{3B^i_\infty\}$ covers $\tilde{\Gamma}_{p_\infty}(t)$.

By Theorem \ref{T.main}, there exists $c>0$ such that for any $x^i$ being the center of $B^i_\infty$,
\[
d(x^i,\Gamma_{p_m}(t'))< R/2\quad\text{with }t':=t-cR^2
\]
when $m$ is sufficiently large. Thus, each $\frac12B^i_\infty$ intersects with $\Gamma_{p_m}(t')$. Therefore, for each $i$ we can adjust the center of $\frac{1}{2}B_\infty^i$ to obtain another collection of balls $\{\tilde{B}^i\}$ such that, $\tilde{B}^i\subseteq B^i_\infty$, each $\tilde{B}^i$ has radius $\frac{1}2R$ and its center lies on $\Gamma_{p_m}(t')$. It is clear that $\{\tilde{B}^i\}$ are disjoint, and $\{8\tilde{B}^i\}$ covers $\Gamma_{p_m}(t')$ provided that $m$ is sufficiently large.
Then the previous argument implies a parallel version of \eqref{6.12}:
\[
\|\calB_\infty\|\leq C\zeta(R,m)R^{-d-\mu/(m-1)}
\]
with $C>0$ independent of $m$. 
Letting $m\to\infty$ and using \eqref{6.13} yield $\|\calB_\infty\|\leq CR^{1-d}$, which implies that $\tilde{\Gamma}_{p_\infty}(t)$ has finite $(d-1)$-dimensional Hausdorff measure.
\end{proof}

\appendix

\section{Proof of Theorem \ref{thm: incompressible limit}}
\label{sec: proof of incompressible limit}

The proof is lengthy but standard.
It proceeds in several steps.
\begin{enumerate}
\item Show that $\{\varrho_m\}_{m>1}$ and $\{p_m\}_{m>1}$ are uniformly bounded and uniformly compactly supported, which has been done in Lemma \ref{uniformb}.

\item Derive uniform-in-$m$ estimates for $\{\varrho_m\}$ and $\{p_m\}$ with $m$ being sufficiently large.
\item Pass to the limit to justify the incompressible limit.
\item Finally, show that the incompressible limit has a unique solution.
\end{enumerate}
A major part of the following argument is adapted from that in \cite{David_S}.

\subsection{Uniform-in-$m$ a priori estimates}

It is clear that Lemma \ref{uniformb} implies uniform $L^1$-bound for $p_m$ and also $\rho_m$.
More precisely, there exists a universal constant $C>0$, such that
\beq
\|p_m(\cdot,t)\|_{L^1(\bbR^d)}\leq C,\quad
\|\varrho_m(\cdot,t)\|_{L^1(\bbR^d)}\leq C
\label{eqn: spatial L^1 bound for p_m and rho_m}
\eeq
holds for all $t\in [0,T)$ and $m>1$.
Here $C$ is universal, only depending on $d$, $T$, $b$, $f$, and $R_0$.


We integrate \eqref{1.1} in space-time to find that
\[
\lim_{t\to T^-}\int_{\bbR^d}p_m(x,t)\,dx-\int_{\bbR^d}p_m(x,0)\,dx
= \int_{Q_T} (m-1) p_m(\Delta p_m+\nabla\cdot b +f)+\nabla p_m\cdot(\nabla p_m+b)\,dx\,dt.
\]
Integrating by parts yields that
\[
(m-2) \int_{Q_T} |\na p_m|^2\,dx\,dt +\lim_{t\to T^-} \|p_m(\cdot,t)\|_{L^1}
= \|p_m(\cdot,0)\|_{L^1}
+ \int_{Q_T} (m-1) p_m(\nabla\cdot b +f) -  p_m\na \cdot b \,dx\,dt,
\]
and thus
\begin{align*}
&\;\int_{Q_T} |\na p_m|^2\,dx\,dt + \frac{1}{m-2} \lim_{t\to T^- } \|p_m(\cdot,t)\|_{L^1}\\
= &\; \frac{1}{m-2}\|p_m(\cdot,0)\|_{L^1}
+ \int_{Q_T} p_m\left(\nabla\cdot b +\frac{m-1}{m-2} f \right) dx\,dt.
\end{align*}
Therefore, there exists $C>0$, such that, for any $m\geq 3$,
\beq
\|\na p_m\|_{L^2(Q_T)} \leq C.
\label{eqn: spacetime L^2 estimate for grad p}
\eeq

In what follows, we derive uniform-in-$m$ space-time $W^{1,1}$-estimate for $\varrho_m$ and $p_m$.
We differentiate \eqref{main} with respect to $x_i$ $(i = 1,\cdots, d)$ to find that
\begin{align*}
\partial_t \partial_i \varrho_m
= &\;
\pa_i \varrho_m \D p_m + \na \pa_i \varrho_m \cdot \na p_m
+ \varrho_m \D \pa_i p_m + \na \varrho_m \cdot \na \pa_i p_m\\
&\; + \pa_i \varrho_m \na\cdot b + b\cdot \na \pa_i \varrho_m
+ \varrho_m \na\cdot \pa_i b + \pa_i b\cdot \na \varrho_m
\\
&\; + \pa_i \varrho_m \cdot f + \varrho_m \big[\pa_{x_i} f(x,t,p_m) + \pa_p f(x,t,p_m) \cdot \pa_i p_m\big].
\end{align*}
Multiplying it by $\sgn(\pa_i \varrho_m)$ and using the Kato's inequality $\sgn(\pa_i p_m)\D(\pa_i p_m)\leq \D|\pa_i p_m|$, we obtain that
\begin{align*}
\partial_t |\partial_i \varrho_m|
%
\leq &\;
\na\cdot \big[|\pa_i \varrho_m| \na p_m + \varrho_m \na |\pa_i p_m| + b |\pa_i \varrho_m | \big]\\
&\; + \sgn(\pa_i \varrho_m)\big[\varrho_m \na\cdot \pa_i b + \pa_i b\cdot \na \varrho_m\big] + |\pa_i \varrho_m| f
\\
&\;+ \sgn(\pa_i \varrho_m) \varrho_m \pa_{x_i} f(x,t,p_m) + \pa_p f(x,t,p_m)\cdot m \varrho_m^{m-1}|\pa_i \varrho_m|.
\end{align*}
Using the assumption $\pa_p f \leq 0$ and integrating on both sides,
\begin{align*}
\frac{d}{dt}\|\pa_i \varrho_m\|_{L^1}
\leq &\;\int_{\bbR^d} \sgn(\pa_i \varrho_m)\big[\varrho_m \na\cdot \pa_i b + \pa_i b\cdot \na \varrho_m\big] + |\pa_i \varrho_m| f + \sgn(\pa_i \varrho_m) \varrho_m \pa_{x_i} f(x,t,p_m)\,dx\\
\leq &\; C\|\varrho_m\|_{L^1} \|\na^2 b\|_{L^\infty(B_{R(T)}\times [0,T])} + C\|\na b\|_{L^\infty(B_{R(T)}\times [0,T])}\sum_{j = 1}^d\| \pa_j \varrho_m\|_{L^1}\\
&\; + \|\pa_i \varrho_m\|_{L^1} \|f\|_{L^\infty(B_{R(T)}\times [0,T]\times [0,C])} + \|\varrho_m\|_{L^1} \| \pa_{x_i} f\|_{L^\infty (B_{R(T)}\times [0,T]\times [0,C])}.
\end{align*}
We sum over $i$ to obtain that
\begin{align*}
\frac{d}{dt}\sum_{i = 1}^d\|\pa_i \varrho_m\|_{L^1}
\leq &\;C\|\varrho_m\|_{L^1} \big(\|\na^2 b\|_{L^\infty(B_{R(T)}\times [0,T])} + \| \pa_x f\|_{L^\infty (B_{R(T)}\times [0,T]\times [0,C])}
\big)\\
&\;+ C\big(\|\na b\|_{L^\infty(B_{R(T)}\times [0,T])}+ \|f\|_{L^\infty(B_{R(T)}\times [0,T]\times [0,C])}\big)\sum_{i = 1}^d\| \pa_i \varrho_m\|_{L^1}.
\end{align*}
Then under the assumption that $\sup_{m>1}\|\na \varrho_m(\cdot,0)\|_{L^1}<+\infty$, the Gronwall's inequality and \eqref{eqn: spatial L^1 bound for p_m and rho_m} imply that there exists a constant $C>0$, such that
\beq
\|\na \varrho_m(\cdot,t)\|_{L^1(\bbR^d)}\leq C\big(\|\na \varrho_m(\cdot,0)\|_{L^1(\bbR^d)}+1\big)
\label{eqn: spatial L^1 bound for grad rho_m}
\eeq
for all $t\in [0,T)$ and $m>1$.

Similarly, differentiating \eqref{main} in $t$ gives that
\begin{align*}
\partial_t \pa_t\varrho_m
= 
&\; m\Delta(\partial_t \varrho_m \cdot \varrho_m^{m-1})
+ b\cdot \na \pa_t \varrho_m + \pa_t \varrho_m (\na\cdot b) + \na \varrho_m\cdot \pa_t b  + \varrho_m \na \cdot \pa_t b \\
&\; +\pa_t \varrho_m f(x,t,p_m) + \varrho_m \pa_t f(x,t,p_m) + \varrho_m \pa_p f(x,t,p_m) \pa_t p_m.
\end{align*}
Multiplying this by $\sgn(\pa_t \varrho_m)$ and arguing as above, we find that
\begin{align*}
\partial_t |\pa_t\varrho_m|
%
%
\leq &\;
m \D \big(|\pa_t \varrho_m| \varrho_m^{m-1} \big)
+ \na\cdot \big(b|\pa_t \varrho_m|\big)
+ \big[\na \varrho_m\cdot \pa_t b  + \varrho_m \na \cdot \pa_t b \big]\sgn(\pa_t \varrho_m) \\
&\; +|\pa_t \varrho_m| f(x,t,p_m) + \sgn(\pa_t \varrho_m) \varrho_m \pa_t f(x,t,p_m) + \varrho_m \pa_p f(x,t,p_m) |\pa_t p_m|.
\end{align*}
Hence, under the assumption that $\pa_p f\leq 0$, 
\begin{align*}
&\;\frac{d}{dt} \|\pa_t\varrho_m\|_{L^1}
+\int_{\bbR^d} \varrho_m |\pa_p f(x,t,p_m)| |\pa_t p_m|\,dx\\
\leq &\;
\|\na \varrho_m\|_{L^1}\|\pa_t b\|_{L^\infty}
+ \|\varrho_m\|_{L^1} \|\na \pa_t b\|_{L^\infty} 
+ \|\pa_t \varrho_m\|_{L^1}\| f\|_{L^\infty} + \|\varrho_m \|_{L^1}\| \pa_t f\|_{L^\infty}.
\end{align*}
Using \eqref{eqn: spatial L^1 bound for p_m and rho_m} and \eqref{eqn: spatial L^1 bound for grad rho_m}, we conclude that there exists a constant $C>0$, such that for all $t\in [0,T]$ and all $m>1$,
\[
\|\pa_t\varrho_m(\cdot,t)\|_{L^1(\bbR^d)}
\leq C\big(\|\pa_t\varrho_m(\cdot,0)\|_{L^1}+\|\na \varrho_m(\cdot,0)\|_{L^1}+1\big),
\]
and thus by \eqref{main},
\beq
\|\pa_t\varrho_m(\cdot,t)\|_{L^1(\bbR^d)}
\leq C\big(
\|\Delta (\varrho_m(\cdot,0)^m)\|_{L^1}+\|\na \varrho_m(\cdot,0)\|_{L^1}+1\big).
\label{eqn: spacetime L^1 bound for pa_t rho_m}
\eeq
Moreover,
\beq
\int_{Q_T}\varrho_m |\pa_p f(x,t,p_m)| |\pa_t p_m|\,dx\,dt
\leq
C\big(
\|\Delta (\varrho_m(\cdot,0)^m)\|_{L^1} 
+\|\na \varrho_m(\cdot,0)\|_{L^1}+1\big).
\label{eqn: weighted L^1 spacetime estimate p_t}
\eeq

Finally, also by \eqref{1.1},  
\begin{align*}
&\; \|\pa_t p_m\|_{L^1(Q_T)} \\
\leq &\;
\int_{Q_T} \pa_t p_m \,dx \,dt
+2\int_{Q_T} (m-1) p_m|(\Delta p_m+\nabla\cdot b+f)_-|\,dx\,dt
+ \int_{Q_T} |\nabla p_m||b|- \na p_m\cdot b \,dx\,dt\\
\leq &\;
\int_{\bbR^d}p_m(x,T)-p_m(x,0) \,dx
+2\int_{Q_T} (m-1) p_m|(\Delta p_m+\nabla\cdot b+f)_-|\,dx\,dt
+ C\|\na p_m\|_{L^2(Q_T)}.
\end{align*}
If \eqref{R.1.1} holds, thanks to the Aronson-B\'{e}nilan estimate (cf.\;Remark \ref{R.1}), 
\[
\|\pa_t p_m\|_{L^1(Q_T)}
\leq
\lim_{t\to T^- }\int_{\bbR^d}p_m(x,t)-p_m(x,0) \,dx
+C\int_{Q_T} p_m\,dx\,dt + C\|\na p_m\|_{L^2(Q_T)},
\]
and thus by \eqref{eqn: spacetime L^2 estimate for grad p},
\beq
\|\pa_t p_m\|_{L^1(Q_T)} \leq C.
\label{eqn: spacetime L^1 bound for pa_t p_m}
\eeq
Alternatively, under the assumption that $\pa_p f\leq -\alpha$ for some $\alpha>0$, this estimate can be proved by using \eqref{eqn: weighted L^1 spacetime estimate p_t}. See the proof in \cite{David_S}.

\subsection{The incompressible limit}
Suppose $(\varrho_m,p_m)$ $(m>1)$ are solutions to \eqref{main}--\eqref{main2} in $Q_T$.
Thanks to Lemma \ref{uniformb} and the bounds \eqref{eqn: spatial L^1 bound for p_m and rho_m}--\eqref{eqn: spacetime L^1 bound for pa_t rho_m} and \eqref{eqn: spacetime L^1 bound for pa_t p_m}, we apply the Kolmogorov-Riesz-Fr\'{e}chet theorem \cite[Theorem 4.26]{Brezis2010FunctionalAS} to find that there exists a subsequence $\{(\varrho_{m_k},p_{m_k})\}_{k = 1}^\infty$ as well as $\varrho_\infty\in BV(Q_T)$ and $p_\infty\in BV(Q_T)$ with $\na p_\infty \in L^2(Q_T)$, such that, as $k\to+\infty$,
\beq
\varrho_{m_k}\to \varrho_\infty\mbox{ in }L^1(Q_T),\quad
p_{m_k}\to p_\infty\mbox{ in }L^1(Q_T),
\label{eqn: strong spacetime L^1 convergence of rho and p}
\eeq
and
\beq
\na p_{m_k} \rightharpoonup \na p_\infty\mbox{ in }L^2(Q_T).
\label{eqn: weak spacetime L^2 convergence of grad p_m}
\eeq
Thanks to the uniform $L^\infty$-bounds and interpolation with \eqref{eqn: strong spacetime L^1 convergence of rho and p}, we further obtain that $p_\infty$ is bounded, and for any $q\in [1,+\infty)$, as $k\to +\infty$,
\beq
\varrho_{m_k}\to \varrho_\infty\mbox{ in }L^q(Q_T),\quad
p_{m_k}\to p_\infty\mbox{ in }L^q(Q_T).
\label{eqn: strong spacetime L^q convergence of rho and p}
\eeq
By taking a further subsequence if necessary, we may assume that the convergence in \eqref{eqn: strong spacetime L^q convergence of rho and p} also holds in the almost everywhere sense.
Then taking the limit in
\[
\varrho_{m_k} = \left(\frac{m_k-1}{m_k}p_{m_k}\right)^{\frac{1}{m_k-1}}\leq C^{\frac{1}{m_k-1}},
\quad \mbox{and}\quad
\varrho_{m_k} \cdot \frac{m_k-1}{m_k}p_{m_k} = \left(\frac{m_k-1}{m_k}p_{m_k}\right)^{1+\frac{1}{m_k-1}},
\]
we readily obtain that
\beq
\varrho_\infty\leq 1,\quad
p_\infty(1-\varrho_\infty)=0\quad \mbox{almost everywhere.}
\label{eqn: Hele-Shaw graph}
\eeq

The weak formulation of \eqref{main} (i.e., \eqref{def sol2}) reads that, for any $\varphi = \varphi(x,t)\in C_0^\infty(\bbR^d\times [0,T))$,
\[
\int_{Q_T} \varrho_{m}\partial_t \varphi\,dx\,dt = -\int_{\mathbb{R}^d} \varrho_m^0(x)\varphi(0,x)\, dx +\int_{Q_T}  \big(\varrho_m\nabla p_m+\varrho_m b \big)\nabla\varphi
- \varrho_m f(x,t,p_m)\varphi \,dx\,dt.
\]
Taking $m = m_k$ and sending $k\to +\infty$, we can justify by \eqref{eqn: weak spacetime L^2 convergence of grad p_m}, \eqref{eqn: strong spacetime L^q convergence of rho and p}, and the dominated convergence theorem that
\[
\int_{Q_T} \varrho_\infty \partial_t \varphi\,dx\,dt  =
-\int_{\mathbb{R}^d} \varrho^0(x)\varphi(0,x)\, dx
+
\int_{Q_T}  \big(\varrho_\infty\nabla p_\infty+\varrho_\infty b \big)\nabla\varphi
- \varrho_\infty f(x,t,p_\infty)\varphi \,dx\,dt.
\]
Hence, in the sense of distribution, $(\varrho_\infty,p_\infty)$ satisfies
\beq
\partial_t\varrho_\infty =\nabla\cdot (\varrho_\infty\nabla p_\infty + \varrho_\infty  b )+\varrho_\infty f(x,t,p_\infty),
\label{eqn: PDE for incompressible limit}
\eeq
with $\varrho_\infty(x,0) = \varrho^0(x)$.
By \eqref{eqn: Hele-Shaw graph}, it also holds in distribution that
\beq
\partial_t\varrho_\infty =\D p_\infty + \na\cdot (\varrho_\infty  b )+\varrho_\infty f(x,t,p_\infty).
\label{eqn: PDE for incompressible limit 2}
\eeq

\begin{remark}
Under suitable additional assumptions, one can further derive finer estimates 
for $\na p_m$ and 
$\Delta p_m$, which eventually leads to the conclusion that the incompressible limit should satisfy the complementarity condition 
$ p_\infty(\Delta p_\infty+\nabla\cdot b +f)=0$
in the sense of distribution (see \eqref{1.2}).
However, this is not needed in proving the uniqueness of the incompressible limit or the space-time $L^1$-convergence of $p_m$, so we shall omit that.
We refer the readers to \cite{chu2022,David_S} for more details.
\end{remark}

\subsection{Uniqueness of the limit}
It remains to prove that the compactly supported solution to \eqref{eqn: Hele-Shaw graph} and \eqref{eqn: PDE for incompressible limit 2} is unique.
Once this is achieved, we can conclude that the convergence in \eqref{eqn: strong spacetime L^1 convergence of rho and p}--\eqref{eqn: strong spacetime L^q convergence of rho and p} actually holds for the whole sequence.

\begin{lem}
Assume \eqref{1.7}, $\pa_p f\leq 0$, and that $|\pa_{pp}f |+ |\pa_{tp}f|$ is locally finite in $Q_T\times [0,+\infty)$. 
Given $T>0$ and the initial data $\varrho_\infty(x,0) =\varrho^0\in [0,1]$ that is compactly supported, the equations \eqref{eqn: Hele-Shaw graph} and \eqref{eqn: PDE for incompressible limit 2} have a unique solution $(\varrho_\infty,p_\infty)$ in $Q_T$ satisfying that $\varrho_\infty, p_\infty \in L^\infty \cap BV(Q_T)$ are compactly supported, and $\nabla p_\infty \in L^2(Q_T)$.

\begin{proof}
The argument is standard, employing the Hilbert duality method.
We only sketch it here.
One can find more details in e.g.~\cite[Section 5]{David_S}.

With slight abuse of the notations, suppose $(\varrho_1, p_1)$ and $(\varrho_2,p_2)$ are two compactly supported solutions on $\bbR^d \times [0,T]$.
Assume that for a sufficiently large $R$, the supports of $\varrho_i$ and $p_i$ ($i = 1,2$) are contained in $B_R$ for all $t\in [0,T]$.
Subtracting the equations \eqref{eqn: PDE for incompressible limit 2} for $\varrho_1$ and $\varrho_2$, we find that, in the sense of distribution,
\[
\partial_t(\varrho_1-\varrho_2) =\D (p_1-p_2) + \na\cdot ((\varrho_1-\varrho_2)  b )+(\varrho_1 f_1 -\varrho_2 f_2),
\]
where $f_i: = f(x,t,p_i)$.
That means, for any smooth test function $\psi \in C^\infty(B_R\times [0,T])$ satisfying that $\psi(\cdot,T)\equiv 0$ and $\psi|_{\pa B_R}\equiv 0$, 
\beq
\int_{B_R\times [0,T]} (\varrho_1-\varrho_2)\pa_t \psi  +(p_1-p_2)\D \psi - (\varrho_1-\varrho_2)  b \cdot \na \psi+(\varrho_1 f_1 -\varrho_2 f_2)\psi\,dx\,dt = 0.
\label{eqn: weak form of equation for the difference}
\eeq
Here we used the fact $\varrho_1(x,0) = \varrho_2(x,0)$.
Denote
\[
A :=  \frac{\varrho_1-\varrho_2}{\varrho_1-\varrho_2+p_1-p_2},\quad 
B := \frac{p_1-p_2}{\varrho_1-\varrho_2+p_1-p_2},\quad 
D := -\varrho_2\frac{f_1-f_2}{p_1-p_2}.
\]
We define $A = 0$ whenever $\varrho_1 = \varrho_2$ (even when $p_1 =p_2$), and $B = 0$ whenever $p_1 = p_2$ (even when $\varrho_1 = \varrho_2$).
When $p_1=p_2$, we define $D = -\varrho_2 \pa_p f(x,t,p_i)$.
Since $(\varrho_i,p_i)$ satisfies \eqref{eqn: Hele-Shaw graph}, we have $A,B\in [0,1]$.
By virtue of the assumptions on $f$, $D\in [0, C]$ for some universal constant $C$.
Then \eqref{eqn: weak form of equation for the difference} can be rewritten as 
\beq
\int_{B_R\times [0,T]} (\varrho_1-\varrho_2+p_1-p_2)\left[A\pa_t \psi  +B\D \psi - A  b \cdot \na \psi+(Af_1 - BD)\psi\right] dx\,dt = 0.
\label{eqn: weak form of equation for the difference recast}
\eeq

In view of this, we introduce smooth approximations of $A,B,D, b, f_1$ in $B_R\times [0,T]$, denoted by $A_n, B_n, D_n, b_n,f_{1,n}$ respectively, such that
\begin{align*}
\|A_n-A\|_{L^2(B_R\times [0,T])}
&+\|B_n-B\|_{L^2(B_R\times [0,T])}
+\|D_n-D\|_{L^2(B_R\times [0,T])}\\
&+\|b_n-b\|_{L^\infty(B_R\times [0,T])}
+\|f_{1,n}-f_1\|_{L^2(B_R\times [0,T])}\leq \frac{C}{n},
\end{align*}
and
\[
A_n,B_n\in \left[1,\frac{1}{n}\right],\quad
D_n, |b_n|,|\na b_n|,|f_{1,n}|\in [0,C],\quad 
\|\na f_{1,n}\|_{L^2(B_R\times [0,T])}+\|\pa_t D_n\|_{L^1(B_R\times [0,T])}\leq C,
\]
where $C>0$ are universal constants.
We note that a uniform $L^2$-bound for $\na f_{1,n}$ is possible because 
\[
\na f_1 = \pa_x f(x,t,p_1) + \pa_p f(x,t,p_1)\na p_1\in L^2(B_R\times [0,T]).
\]
A uniform $L^1$-bound for $\pa_t D_n$ stems from the following formal calculation 
\begin{align*}
\pa_t D 
= &\; -\pa_t \varrho_2 \cdot \frac{f_1-f_2}{p_1-p_2} - \varrho_2 \cdot \frac{\pa_t f(x,t,p_1)-\pa_t f(x,t,p_2)}{p_1-p_2}\\
&\; - \varrho_2 \pa_t p_1\cdot \frac{\pa_p f(x,t,p_1) - \frac{f_1-f_2}{p_1-p_2}}{p_1-p_2}
- \varrho_2 \pa_t p_2\cdot \frac{\frac{f_1-f_2}{p_1-p_2} -\pa_p f(x,t,p_2)}{p_1-p_2},
\end{align*}
as well as the assumptions on $(\varrho_i,p_i)$ and $f$.

Take an arbitrary $\eta\in C_0^\infty(B_R\times [0,T])$, and consider the approximate dual problem
\[
\pa_t \psi +\frac{B_n}{A_n}\Delta \psi - b_n\cdot \na \psi 
+ \left( f_{1,n} - \frac{B_nD_n}{A_n}\right)\psi = \eta,\quad
\psi(\cdot,T) \equiv 0,\quad 
\psi|_{\pa B_R} \equiv 0.
\]
Since $B_n/A_n\in [n^{-1},n]$, and all the coefficients are smooth, this equation admits a unique smooth solution $\psi_n = \psi_n(x,t)$.
Then one can follow the argument in \cite[Section 5]{David_S} to show that
\[
\|\psi_n\|_{L^\infty(B_R\times [0,T])}
+\sup_{t\in [0,T]}\|\na \psi_n(\cdot,t)\|_{L^2(B_R)}
+\big\|(B_n/A_n)^{1/2}(\Delta \psi_n - D_n \psi_n)\big\|_{L^2(B_R\times [0,T])}\leq C,
\]
where $C$ is independent of $n$.
Then we take $\psi$ in \eqref{eqn: weak form of equation for the difference recast} to be $\psi_n$ and derive that 
\begin{align*}
0 
= &\; \int_{B_R\times [0,T]} (\varrho_1-\varrho_2+p_1-p_2)\big[A\pa_t \psi_n  +B\D \psi_n - A  b \cdot \na \psi_n+(Af_1 - BD)\psi_n\big] \, dx\,dt\\
= &\; \int_{B_R\times [0,T]} (\varrho_1-\varrho_2+p_1-p_2)A \left[\pa_t \psi_n  +\frac{B_n}{A_n}\D \psi_n - b_n \cdot \na \psi_n+\left(f_{1,n} - \frac{B_nD_n}{A_n}\right)\psi_n\right] dx\,dt\\
&\; +\int_{B_R\times [0,T]} (\varrho_1-\varrho_2+p_1-p_2)\\
&\;\qquad \cdot
\left[ \left(B-A\frac{B_n}{A_n}\right)\D \psi_n - A  (b-b_n) \cdot \na \psi_n+ A(f_1-f_{1,n})\psi_n - \left( BD -A\frac{B_n}{A_n}D_n \right)\psi_n\right] dx\,dt\\
= &\; \int_{B_R\times [0,T]} (\varrho_1-\varrho_2) \eta \, dx\,dt + I_{1,n}+I_{2,n}+I_{3,n}+I_{4,n},
\end{align*}
where 
\begin{align*}
I_{1,n} := &\;\int_{B_R\times [0,T]} (\varrho_1-\varrho_2+p_1-p_2)
(B-B_n)(\D \psi_n -D_n \psi_n)\, dx\,dt,\\
I_{2,n} := &\;\int_{B_R\times [0,T]} (\varrho_1-\varrho_2+p_1-p_2)
(A_n -A)\cdot \frac{B_n}{A_n}(\D \psi_n -D_n \psi_n)\, dx\,dt,\\
I_{3,n} := &\;- \int_{B_R\times [0,T]} (\varrho_1-\varrho_2+p_1-p_2) B(D-D_n) \psi_n\, dx\,dt,\\
I_{4,n} := &\; \int_{B_R\times [0,T]} (\varrho_1-\varrho_2)
\big[-
(b-b_n) \cdot \na \psi_n+ (f_1-f_{1,n})\psi_n\big]\, dx\,dt.
\end{align*}
Using the assumptions on the approximations as well as the uniform estimates above, it is not difficult to show that $I_{j,n}\to 0$ $(j = 1,2,3,4)$ as $n\to +\infty$.
Therefore, for any $\eta\in C_0^\infty(B_R\times [0,T])$,
\[
\int_{B_R\times [0,T]} (\varrho_1-\varrho_2) \eta \, dx\,dt  = 0.
\]
This implies that $\varrho_1 = \varrho_2$ almost everywhere in $B_R\times [0,T]$.
Combining this with \eqref{eqn: weak form of equation for the difference}, we also find that $p_1 = p_2$ almost everywhere in $B_R\times [0,T]$.
\end{proof}
\end{lem}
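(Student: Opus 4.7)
The plan is to follow the Hilbert duality approach hinted at in the statement. Suppose $(\varrho_1,p_1)$ and $(\varrho_2,p_2)$ are two solutions with the same initial data and supports contained in $B_R\times [0,T]$ for some large $R$. Subtracting the weak formulations of \eqref{eqn: PDE for incompressible limit 2} gives
\[
\int_{B_R\times [0,T]}(\varrho_1-\varrho_2)\partial_t\psi +(p_1-p_2)\Delta\psi - (\varrho_1-\varrho_2)b\cdot\nabla\psi +(\varrho_1 f_1-\varrho_2 f_2)\psi\,dx\,dt = 0
\]
for all smooth test functions $\psi$ vanishing on $\partial B_R$ and at $t=T$. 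The key idea is to factor out $\varrho_1-\varrho_2+p_1-p_2$ by introducing
\[
A:=\frac{\varrho_1-\varrho_2}{\varrho_1-\varrho_2+p_1-p_2},\quad B:=\frac{p_1-p_2}{\varrho_1-\varrho_2+p_1-p_2},\quad D:=-\varrho_2\frac{f_1-f_2}{p_1-p_2},
\]
with the natural conventions when the denominators vanish. The Hele-Shaw graph relation \eqref{eqn: Hele-Shaw graph} ensures $A,B\in [0,1]$ pointwise (the problematic case $\varrho_1\neq\varrho_2$ forces $p_1=p_2$ or vice versa), and the assumption $\partial_pf\leq 0$ together with $\varrho_2\in[0,1]$ yields $D\in[0,C]$.

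For any target $\eta\in C_0^\infty(B_R\times[0,T])$, I would consider the backward dual problem
\[
\partial_t\psi + \frac{B_n}{A_n}\Delta\psi - b_n\cdot\nabla\psi + \Bigl(f_{1,n}-\frac{B_nD_n}{A_n}\Bigr)\psi = \eta,\qquad \psi(\cdot,T)\equiv 0,\quad \psi|_{\partial B_R}\equiv 0,
\]
where $A_n,B_n,D_n,b_n,f_{1,n}$ are smooth approximations of $A,B,D,b,f_1$ chosen so that $A_n,B_n\in [n^{-1},1]$, $D_n\geq 0$ is bounded, and $A_n\to A$, $B_n\to B$, $D_n\to D$, $f_{1,n}\to f_1$ in $L^2(B_R\times[0,T])$ while $b_n\to b$ uniformly; one also needs control on $\nabla f_{1,n}$ in $L^2$ and $\partial_t D_n$ in $L^1$, which follows formally from the $BV$ regularity of $(\varrho_i,p_i)$ and the assumed $C^1$-regularity of $f$ in $(x,t,p)$ together with local finiteness of $|\partial_{pp}f|+|\partial_{tp}f|$. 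The classical parabolic theory supplies a smooth $\psi_n$.

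The analytic heart of the argument is then a uniform-in-$n$ bound of the form
\[
\|\psi_n\|_{L^\infty}+\sup_{t}\|\nabla\psi_n(\cdot,t)\|_{L^2(B_R)}+\bigl\|(B_n/A_n)^{1/2}(\Delta\psi_n - D_n\psi_n)\bigr\|_{L^2(B_R\times[0,T])}\leq C,
\]
obtained by the standard energy test: multiply the dual equation by suitable combinations of $\psi_n$ and $\Delta\psi_n-D_n\psi_n$ and integrate by parts, using the sign conditions on $D_n$ and the smoothness of $b_n$. Substituting $\psi=\psi_n$ into the weak identity for the difference and rewriting, one obtains
\[
\int (\varrho_1-\varrho_2)\eta\,dx\,dt = -I_{1,n}-I_{2,n}-I_{3,n}-I_{4,n},
\]
where the error terms $I_{j,n}$ pair the $L^\infty$-bounded factor $\varrho_1-\varrho_2+p_1-p_2$ with products of $(B-B_n)$, $(A-A_n)$, $(D-D_n)$, $(b-b_n)$, $(f_1-f_{1,n})$ against the uniformly controlled quantities $\Delta\psi_n-D_n\psi_n$, $\nabla\psi_n$, $\psi_n$. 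Each $I_{j,n}\to 0$ by Cauchy–Schwarz and the chosen approximation rates, hence $\int(\varrho_1-\varrho_2)\eta\,dx\,dt = 0$ for every $\eta$, giving $\varrho_1=\varrho_2$ a.e.; then the weak identity forces $\Delta(p_1-p_2)=0$ in distribution and, combined with compact support and $\nabla p_i\in L^2$, $p_1=p_2$ a.e. The main obstacle is the uniform bound on $(B_n/A_n)^{1/2}(\Delta\psi_n-D_n\psi_n)$: since $B_n/A_n$ degenerates as $n\to\infty$, one must carry out the energy estimate carefully so that the troublesome term is absorbed rather than simply bounded, which is the crux of the Hilbert duality argument and is the step where the sign $D_n\geq 0$ and the bounded shift $f_{1,n}$ are used essentially.
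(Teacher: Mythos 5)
Your proposal follows the paper's Hilbert duality argument essentially verbatim: the same decomposition $A,B,D$, the same regularized dual problem, the same uniform energy bounds, and the same error terms $I_{1,n},\dots,I_{4,n}$. The only slip is at the very end: once $\varrho_1=\varrho_2$, the residual weak identity is $\int (p_1-p_2)(\Delta\psi - D\psi)\,dx\,dt = 0$, i.e.\ $(\Delta-D)(p_1-p_2)=0$ in distribution, not $\Delta(p_1-p_2)=0$; the conclusion $p_1=p_2$ still follows because $D\ge 0$ (test against $p_1-p_2$ and use $\nabla p_i\in L^2$ and compact support to get $\int |\nabla(p_1-p_2)|^2 + D|p_1-p_2|^2 = 0$), which is in line with the paper's brief closing remark.
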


\section{Proof of Lemma \ref{L.5.12}}
\label{sec: proof of strict expansion lemma 1}

Fix $m\geq 2$.
Take a free boundary point $x_0\in\Gamma_{p_m}(0)$; in the rest of the proof, we shall omit the subscript $p_m$ whenever it is convenient. 
Since $\Omega(0)$ is Lipschitz (though its Lipschitz constant can possibly depend on $m$), there exists $C_m>0$, such that for any sufficiently small $\eps>0$, we are able find $z_0\in \Omega(0)$ satisfying that $\eps=d(z_0,\Gamma(0))$ and $d(x_0,z_0)\leq C_m\eps$. 
After shifting, we assume $z_0=0$. The goal is to find some $t_\eps>0$, which converges to $0$ as $\eps\to 0$, such that $B(X(x_0,0;t_\eps),r_\eps)\subseteq \Omega_{p_m}(t_\eps)$ for some $r_\eps>0$.

We apply a barrier argument.
Define $r_0:= \eps-\eps^{\frac{1}{1-\varsigma_0/4}}$.
By taking $\eps$ to be sufficiently small, we assume $r_0\in [\eps/2,\eps)$.
By the assumption \eqref{introgr},
\beq \lb{5.10}
p_m^0(x)\geq \gamma_0(\eps-|x|)_+^{2-\varsigma_0}
\geq \gamma_0(\eps-r_0)^{2-\varsigma_0} \mathds{1}_{\{|x|\leq r_0\}} 
\geq \gamma_0 \eps^{\frac{2-\varsigma_0}{1-\varsigma_0/4}}\cdot \eps^{-2}
\left(r_0^2 - |x|^2\right)_+.
\eeq
Denote the coefficient above by $A_0 := \gamma_0 \eps^{\frac{2-\varsigma_0}{1-\varsigma_0/4}-2}$.
By requiring $\eps$ to be sufficiently small, we can make $A_0 = \gamma_0 \eps^{-\frac{2\varsigma_0}{4-\varsigma_0}} \geq 2(\|\na b\|_{\infty}+1 )$. 
%
%
For some large $L>0$ to be determined, we define
\[
A(t):=\frac{A_0}{LA_0t+1},\quad r(t)=r_0(LA_0t+1)^\frac1L,\quad\text{and}\quad \tau_0:= \min\left\{\frac{A_0-\|\nabla b\|_\infty}{LA_0\|\nabla b\|_\infty},\,\frac{A_0-1}{LA_0}\right\}.
\]
It is straightforward to verify that for $t\in [0,\tau_0]$,
\beq\lb{A2}
A'=-LA^2,\quad r'=Ar,\quad\text{and}\quad  A\geq \max\{\|\nabla b\|_\infty,1\}.
\eeq
Then let
\[
\phi(x,t):= A(t)\big(r(t)^2-|x|^2\big)_+.
\]
It follows from \eqref{5.10} that $p_m^0(x)\geq \phi(x,0)$.

We shall compare $\phi(x,t)$ with $v(x,t):=p_m(x+X(t),t)$, which satisfies $\calL(v)=0$.
Here
\beq\lb{5.11}
\calL(g):=g_t - (m-1)g(\Delta g+F)- |\nabla g|^2-\nabla g\cdot \big( b(x+X,t)-b (X,t)\big),
\eeq
with $X:=X(t)$ defined in \eqref{ode}, and 
$F:=\nabla\cdot b (x+X,t)+f(x+X,t,v(x,t))$.
Note that $F$ is viewed as a given function of $(x,t)$, not depending on $g$.
Since $v$ is a priori bounded, $F$ is bounded as well. 

Let us show that $\phi(x,t)$ is a subsolution to \eqref{5.11}. 
Direct calculation yields that, for $|x|\leq r(t)$,
\begin{align*}
\calL(\phi) 
\leq &\; A'(r^2-|x|^2)+2Arr'-(m-1)A(r^2-|x|^2)(-2dA+F)\\
&\;-4A^2|x|^2+2Ax\cdot ({ b }(x+{X},t)-{ b }({X},t))\\
\leq &\; \left(A'+(m-1)(2dA^2+A\|F\|_\infty)\right)(r^2-|x|^2)+2Arr'-4A^2|x|^2+2\|\nabla b \|_\infty A|x|^2.
\end{align*}
By \eqref{A2} and $A\geq 1$, we get
\begin{align*}
\calL(\phi) 
&\leq \left(-LA^2+(m-1)(2dA^2+A^2\|F\|_\infty)\right)(r^2-|x|^2)+2A^2 r^2-4A^2|x|^2+ 2A^2|x|^2 \\
& \leq\big(-L+m(2d+\|F\|_\infty)\big)(r^2-|x|^2)A^2.
\end{align*}
Choosing $L:=m(2d+\|F\|_\infty)$, we obtain $\calL(\phi)\leq 0$. 

Then the comparison principle implies  $v\geq\phi$ for all $t\in [0,\tau_0]$. Since $v= p_m(x+X(t),t)$, we get
\[
B_{r(t)}\subseteq \{x-X(t)\,|\, x\in \Omega(t)\}.
\]
Note that $\tau_0\geq c_m>0$ for some $c_m$ independent of $\eps$. 

It remains to find some $t_\eps$, such that $t_\eps\to0$ as $\eps\to 0$, and that $B_{r(t_\eps)}+X(t_\eps)$ contains a neighbourhood of $X(x_0,0;t_\eps)$. 
Since $\|\nabla b \|_\infty \leq C$, $|x_0|\leq C_m\eps$, and
\[
\frac{d}{dt}|X(x_0,0;t)-X(t)|\leq \|\nabla b \|_\infty |X(x_0,0;t)-X(t)|.
\]
we find
\[
|X(x_0,0;t)-X(t)|\leq e^{Ct}C_m\eps.
\]
Therefore, we only need
\[
r(t_\eps)=r_0(LA_0 t_\eps+1)^\frac1L\geq  2  e^{C t_\eps}C_m\eps.
\]
Recall that $r_0 \geq \eps/2$, $L=m(2d+\|F\|_\infty)$, and $A_0= \gamma_0 \eps^{-\frac{2\varsigma_0}{4-\varsigma_0}}$. 
We can pick $t_\eps:=\eps^{\varsigma_0/2}$ and let $\eps$ be suitably large to make this inequality true.
We thus conclude that $B_{r(t_\eps)}+X(t_\eps)$ contains $B(X(x_0,0;t_\eps),r(t_\eps)/2)$ for all $\eps>0$ sufficiently small depending on $m$. 

Since $x_0\in \Gamma(0)$ is arbitrary and by Lemma \ref{streamline}, this completes the proof.

\section{Proof of Lemma \ref{L.4.4}}
\label{sec: proof of L.4.4}

Take an arbitrary $m>1$.
In what follows, we shall omit the subscripts $p_m$ whenever it is convenient.
Take an arbitrary $x_0\in\Gamma(0)$, and let $z_0\in \Omega(0)$ be such that $d(x_0,z_0)= d(z_0,\Gamma(0))=:\eps$. 
This can be achieved thanks to the interior ball assumption when $\eps$ is sufficiently small. Up to a suitable shifting, let us assume $z_0=0$. 
It follows from \eqref{introgr} that, with arbitrary $\varsigma \in(0, \varsigma_0)$,
\beq\lb{5.12}
p_m^0(x)\geq \gamma_0(\eps-|x|)_+^{2-\varsigma_0} 
\geq \gamma_0 \eps^{\varsigma-\varsigma_0}(\eps-|x|)_+^{2-\varsigma}.
\eeq
Hence, we can adjust $\gamma_0$ arbitrarily at the cost of making $\varsigma_0$ to be slightly smaller and requiring $\eps$ to be sufficiently small. 
Thus, without loss of generality and with slight abuse of the notations, we assume that $p_m^0(x)\geq \gamma_0(\eps-|x|)_+^{2-\varsigma_0}$ with
\beq\lb{6.15}
\sigma>2d\gamma_0,\quad \gamma_0>\|\nabla b\|_\infty,\quad \varsigma_0\in (0,1).
\eeq

Next, set
$r_0:=\eps-\eps^{\frac{1}{1-\varsigma_0/4}}$ as in the proof of Lemma \ref{L.5.12}. 
For some $\alpha>0$ to be determined, define
\[
\gamma(t):=e^{-2\alpha t}\gamma_0,\quad 
r(t):=e^{\alpha t}r_0, \quad \text{and}\quad 
\phi(x,t):= \gamma(t)(r(t)^2-|x|^2)_+.
\]
It follows from the proof of \eqref{5.10} that $p_m^0(x) \geq \gamma_0 (r_0^2 - |x|^2)_+$ and thus 
$p_m^0(x)\geq \phi(x,0)$. As before, we shall compare $\phi(x,t)$ with $v(x,t):=p_m(x+X(t),t)$ which satisfies $\tilde\calL(v)=0$.
Here $\tilde\calL$ is defined by
\[
\tilde\calL(g):=g_t - (m-1)g(\Delta g+\tilde F+(\partial_p f) g)- |\nabla g|^2-\nabla g\cdot (b(x+X,t)- b (X,t)),
\]
with $\partial_p f:=\partial_pf(x+X,t,v(x,t))$ and
$\tilde F:=\nabla\cdot b (x+X,t)+f(x+X,t,v)-\partial_p f(x+X,t,v)v$.
Note that $\partial_p f$ and $\tilde F$ are treated as finite given functions of $(x,t)$, which are independent of $g$. 
By the assumption \eqref{cond}, we have $\tilde F\geq \sigma>0$.

To show that $\phi$ is a subsolution to $\tilde\calL$, direct calculation yields for $|x|\leq r(t)$, 
\begin{align*}
\tilde\calL(\phi)
\leq &\;\gamma'(r^2-|x|^2)+2\gamma rr'-(m-1)\gamma(r^2-|x|^2)(-2d\gamma+\tilde F+(\partial_pf)\phi)\\
&\;-4\gamma^2|x|^2+2\gamma x\cdot \big( b (x+X,t)-b (X,t)\big)\\
\leq &\; \big[-2\alpha\gamma-(m-1)\gamma(-2d\gamma+\sigma-\|\partial_p f\|_\infty \gamma r^2)\big](r^2-|x|^2)+2\alpha\gamma r^2-4\gamma^2|x|^2+2\|\nabla{ b }\|_\infty \gamma|x|^2.
\end{align*}
In view of \eqref{6.15}, we can find $\alpha>0$ and $\delta>0$ independent of $m$ and $\eps$ such that for all $t\in [0,\delta]$,
\[
\|\nabla b\|_\infty+\delta \leq\alpha\leq 2e^{-2\alpha\delta}\gamma_0-\|\nabla b\|_\infty\leq 2\gamma-\|\nabla b\|_\infty.
\]
Since $\sigma>2d\gamma_0$, for all $r_0$ sufficiently small,
\[
\sigma\geq 2d\gamma_0+\|\partial_p f\|_\infty \gamma_0r_0^2\geq 2d\gamma+\|\partial_p f\|_\infty \gamma r^2.
\]
As a consequence, we obtain for all $t\in [0,\delta]$ that
\begin{align*}
\tilde\calL(\phi)
\leq -2\alpha\gamma(r^2-|x|^2)+2\alpha\gamma r^2-2(2\gamma-\|\nabla{ b }\|_\infty) \gamma|x|^2\leq  0.
\end{align*}
The comparison principle then implies $p_m\geq\phi$ for all $t\in [0,\delta]$, and thus
\[
B_{r(t)}\subseteq \{x-X(t)\,|\, x\in \Omega(t)\}\quad\text{for }t\in [0,\delta].
\]

Now we look for $t_\eps>0$ satisfying $\lim_{\eps\to 0}t_\eps=0$, such that $B_{r(t_\eps)}+X(t_\eps)$ contains $B(X(x_0,0;t_\eps),r_\eps)$, with $r_\eps := e^{(\alpha-\delta)t_\eps}\eps^2$. 
Since $|x_0|=\eps$ and $\alpha\geq\|\nabla b\|_\infty+\delta$,
\[
|X(x_0,0;t_\eps)-X(t_\eps)|\leq e^{\|\nabla b\|_\infty t_\eps}\eps\leq e^{(\alpha-\delta)t_\eps}\eps.
\]
Hence, it suffices to have $r(t_\eps)\geq  e^{(\alpha-\delta)t_\eps}(1+\eps)\eps$, which reduces to 
\[
e^{\delta  t_\eps }\geq \frac{(1+\eps)\eps}{r_0} = \frac{1+\eps}{1-
\eps^{\frac{\varsigma_0}{4-\varsigma_0}}
}= 1+O\big(\eps^{\frac{\varsigma_0}{4-\varsigma_0}}\big).
\]
This clearly holds if we choose $t_\eps:=\eps^{\varsigma_0/4}$ and let $\eps$ be sufficiently small. 
Besides, $r_\eps$ can be easily represented as a continuous function of $t_\eps$.
In view of Lemma \ref{streamline}, the proof is then completed.

\bigskip 


\end{document}